\newcommand\nbc{\foreignlanguage{vietnamese}{Ngô Bảo Châu}}
\numberwithin{equation}{section}
\newtheorem{theorem}{Theorem}
\newtheorem{lemma}[theorem]{Lemma}
\newtheorem{proposition}[theorem]{Proposition}
\newtheorem{definition}[theorem]{Definition}
\newtheorem{corollary}[theorem]{Corollary}
\numberwithin{theorem}{section}
\theoremstyle{definition}
\newcommand\plim{{\underset{\longleftarrow}{\lim}}\ }
\newcommand\ilim{{\underset{\longrightarrow}{\lim}}\ }
\newcommand\indlim\varinjlim
\newcommand*{\ehat}{{\mathchoice{%
  \hbox{[do not use `ehat` in display style]}%
  }{%
  \hbox{[do not use `ehat` in text style]}%
  }{%
  \mbox{\raisebox{-0.75\height}{$\hat{\mkern4mu}$}}%
  }{%
  \mbox{\raisebox{-0.75\height}{$\scriptstyle\hat{\mkern4mu}$}}%
  }}}%
\newcommand\cL{\mathcal{L}}
\newcommand\cO{\mathcal{O}}
\newcommand\fp{\mathfrak{p}}
\newcommand\fq{\mathfrak{q}}
\newcommand\fm{\mathfrak{m}}
\newcommand\bbA{\mathbb{A}}
\newcommand\NN{\mathbb{N}}
\newcommand\bA{{\bbA}}
\newcommand\Z{\mathbb{Z}}
\newcommand\Q{\mathbb{Q}}
\newcommand\C{\mathbb{C}}
\newcommand\A{\mathbb{A}}
\newcommand\N{\mathbb{N}}
\def\d{\mathrm{d}}
\newcommand\rT{\mathrm{T}}
\newcommand\Jac{\mathrm{Jac}}
\newcommand\pr{\mathrm{pr}}
\newcommand\id{\mathrm{id}}
\newcommand\Spec{\mathrm{Spec}}
\newcommand\Spm{\mathrm{Spm}}
\newcommand\Gm{\mathbb{G}_m}
\newcommand\val{\mathrm{val}}
\newcommand\im{\mathrm{im}}
\newcommand\Mat{\mathrm{Mat}}
\newcommand\ord{\mathrm{ord}}
\newcommand\height{\mathrm{ht}}
\newcommand\nil{\mathrm{nil}}
\newcommand{\IC}{\mathrm{IC}}
\newcommand\Ql{{\Q}_\ell}
\title{Weierstrass preparation theorem and singularities in the space of non-degenerate arcs}
\author{\nbc}
\date{}
\begin{document}
\maketitle

\section{Introduction}
It has been long expected that there exists a deep connection between singularities of certain arc spaces and harmonic analysis over nonarchimedean fields. For instance, certain functions appearing naturally in harmonic analysis can be interpreted as the function attached to the trace of the Frobenius operators on what should be the stalks of the intersection complex of certain arc spaces, see \cite{Bouthier:2014vp}. However, a proper foundation of a theory of perverse sheaves on arc spaces is still missing even though a recent work of Bouthier and Kazhdan \cite{Bouthier:2015tk} outlines a strategy for setting it up.

As the theory of perverse sheaves is originally built for schemes of finite type, the basic difficulty in extending it to arc spaces is that those spaces are almost always infinite dimensional. The first inroad into this new territory is made by Grinberg and Kazhdan who prove that the formal completion of the arc spaces at a point representing a non-degenerate arc is isomorphic to the formal completion of a scheme of finite type, augmented by infinitely many free formal variables, under the assumption that the base field is the field of complex numbers. This result is later improved by Drinfeld who prove it over an arbitrary base field. 

Let us fix the notations in order to state Grinberg-Kazhdan-Drinfeld's theorem. Let $k$ be a field. Let $X$ be an affine $k$-scheme of finite type. For every $n\in\N$, we consider the space of $n$th jets on $X$ representing the functor $R\mapsto \cL_n X(R)=X(R[t]/t^{n+1})$ on the categories of $k$-algebras. The arc space of $X$ is the limit of $\cL_nX$ as $n\to \infty$:
\begin{equation}
	\cL X(R)= \plim X(R[t]/t^{n+1}) = X(R[[t]]).
\end{equation}   
If $X=\Spec(A_0)$ is an affine $k$-scheme, for every $n\in\N$, $\cL_n X$ is represented by an affine $k$-scheme $\cL_n X=\Spec(A_n)$ then $\cL X= \Spec(A)$ where $A=\ilim A_n$ is the filtered colimit of $A_n$ as ${n\to\infty}$. 

If $X$ is a smooth, then the space $\cL_n X$ of $n$-th jets on $X$ is also smooth. For every $n\in\N$, the transition morphism $\cL_{n+1} X\to \cL_n X$ is smooth and surjective. More precisely, the transition morphism $\cL_{n+1} X\to \cL_n X$ is a torsor under certain vector bundles over $\cL_n X$ of rank equal the dimension of $X$.  If $X$ is not smooth, the situation is much more complicated: the transition morphism $\cL_{n+1} X\to \cL_n X$ is neither smooth nor surjective.  

Let $X'$ be a smooth open subscheme of $X$ and $Z$ a closed subscheme of $X$ complement of $X'$.
We are mainly interested in the open subscheme of non-degenerate arcs whose $k$-points form the set
$$\cL^\bullet X(k)=\cL X(k)-\cL Z(k). $$ 
The definition of non-degenerate arc space depends thus on the choice of a smooth open subscheme $X'$ of $X$. Although we may of take $X'$ to be the smooth locus of $X$, it is often more convenient to keep the possibility of choosing $X'$ smaller than the smooth locus of $X$. For instance, the case of the affine line $X=\bA^1$ and $X'=\Gm$ will be of special interest for the study of the Weierstrass preparation theorem. 
Some cares are in order to enunciate the functorial description of $\cL^\bullet X$.

\begin{definition}
The non-degenerate arc space of $X$ relative to the smooth subscheme $X'$ is the functor on the category of $k$-algebras which attaches to each $k$-algebra $R$ the set $\cL^\bullet X(R)$ consisting of maps $x: \Spec (R[[t]]) \to X$ such that the projection $x^{-1}(X')\to \Spec(R)$ is surjective.	
\end{definition}

We can now state the theorem of Grinberg-Kazhdan and Drinfeld, \cite{Grinberg:1998tv}, \cite{Drinfeld:2002vda}.

\begin{theorem}
\label{Grinberg-Kazhdan}
	Let $x\in \cL X(k)$ be a point $x:\Spec (k[[t]]) \to X$ such that the
restriction to $\Spec(k((t)))$ has image in $X'$. Then the completion of $\cL
X$ at $x$ has a finite type formal model model i.e. there exists a $k$-scheme $Y$ of finite type with a point $y\in Y(k)$ such that there exists an isomorphism 
	$$ (\cL X)^\ehat _x \simeq \hat Y_y \hat\times \hat D^\infty$$
where $\hat D^\infty={\rm Spf} (k[[x_1,x_2,\ldots]])$.
\end{theorem}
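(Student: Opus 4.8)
The plan is to follow the Weierstrass-type strategy underlying Grinberg--Kazhdan's and Drinfeld's arguments, whose mechanism is division by the Jacobian determinant of the arc. First I would embed $X$ into an affine space $\bA^N=\Spec k[z_1,\dots,z_N]$ and present it as the zero locus of equations $f_1,\dots,f_m$. Writing the given arc as $x=(\phi_1(t),\dots,\phi_N(t))$ with $\phi_i\in k[[t]]$, a nearby arc is $x+y$ with $y=(y_1,\dots,y_N)$ and $y_i=\sum_{n\ge 0}y_{i,n}t^n$; the coordinates $y_{i,n}$ are the formal variables in which $(\cL X)^\ehat_x$ is presented. Since $f_j(x)=0$, Taylor expansion records the defining relations as
\[
 J(t)\,y+Q(y)=0,\qquad J(t)=\Big(\tfrac{\partial f_j}{\partial z_i}(x)\Big)\in\Mat_{m\times N}(k[[t]]),
\]
where $Q$ gathers the terms of order $\ge 2$ in $y$.

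The non-degeneracy hypothesis says exactly that along $x$ the matrix $J(t)$ attains its maximal rank $c:=\codim_{\bA^N}X=N-\dim X$ over $k((t))$, so some $c\times c$ minor has finite valuation. After reindexing the $z_i$ and discarding equations that become dependent over $k((t))$ (a routine reduction), I may assume $m=c$ and that the first $c$ columns form a matrix $M(t)$ with $\val_t\det M(t)=e<\infty$. Splitting $y=(u,v)$ with $u=(y_1,\dots,y_c)$ and $v=(y_{c+1},\dots,y_N)$, the relations read $M(t)u+P(t)v+Q(u,v)=0$; multiplying by the adjugate and using $\mathrm{adj}(M)\,M=\det(M)\,\mathrm{Id}$ turns this into
\[
 t^e\,w(t)\,u=-\,\mathrm{adj}(M(t))\big(P(t)v+Q(u,v)\big),\qquad w(t)\in k[[t]]^\times.
\]
This is the Weierstrass division at the heart of the statement: up to the factor $t^e$, the coordinates $u$ are expressed through $v$ and through $u$ itself.

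The core step is to turn this relation into an elimination of all but finitely many variables. Comparing coefficients of $t^{n+e}$ on both sides expresses $u_n$ in terms of $v_0,\dots,v_{n+e}$ and of coordinates of strictly lower index; by a Hensel/implicit-function recursion I would solve this to write every $u_{i,n}$, and every $v_{i,n}$ with $n$ large, as a formal power series in the remaining coordinates. The coefficients of $t^0,\dots,t^{e-1}$ on the right-hand side must vanish, and these finitely many equations, together with the finitely many low-order coordinates they involve (indices up to roughly $2e$), cut out a finite type affine scheme $Y$ with marked point $y$ the image of $x$. The coordinates $v_{i,n}$ beyond this range survive untouched as free formal variables, and passing to the completion yields the asserted splitting $(\cL X)^\ehat_x\simeq \hat Y_y\hat\times \hat D^\infty$, with $\hat D^\infty={\rm Spf}\,k[[x_1,x_2,\dots]]$ recording the free directions.

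The delicate point, which I expect to be the main obstacle, is to show that this recursion defines an honest isomorphism of complete local rings: that the ``determined'' variables are genuine power series in the free ones and that the quadratic tail $Q$ never reintroduces low-order coordinates at high order, so that no entanglement persists beyond order $2e$. This must be carried out purely formally over an arbitrary field $k$, which is precisely where Grinberg--Kazhdan's complex-analytic convergence is replaced by Drinfeld's algebraic argument; controlling the denominators $t^e$ uniformly is the technical core. The case $X=\bA^1$, $X'=\Gm$ is a useful sanity check: there $c=1$, $e=\val_t\phi$, the scheme $Y$ is a point, every coordinate is free, and one recovers both the smooth picture and the link with the Weierstrass preparation theorem promised by the title.
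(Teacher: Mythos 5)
Your outline is the right strategy---it is Drinfeld's proof, which this paper does not actually reprove (Theorem \ref{Grinberg-Kazhdan} is quoted from Grinberg--Kazhdan and Drinfeld), but whose mechanism Sections 3--6 elaborate: your Taylor expansion, adjugate multiplication, and division by the Jacobian determinant are the paper's \eqref{Taylor-qt} and \eqref{B-Taylor}, and your finitely many low-order equations correspond to its congruence description of $N_1$. The problem is that the places you declare routine or postpone are exactly where the content lies.

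First, ``discarding equations that become dependent over $k((t))$'' is not a reindexing: it replaces $X$ by the strictly larger complete intersection $X_1\supset X$ cut out by the retained equations, and you must prove $(\cL X)^\ehat_x\simeq(\cL X_1)^\ehat_x$. This holds because a deformation of $x$ over an Artinian local test ring $R$ landing in $X_1$ automatically lands in $X$, and the proof of that needs the fact that a series in $R[[t]]$ with nonzero reduction in $k[[t]]$ is a unit of $R((t))$, hence a non-zero-divisor in $R[[t]]$---this is Prop.~\ref{strict-Weierstrass-Artin} and Lemma~\ref{injective}; the paper's example \eqref{arc-on-x-axis-or-y-axis} for the curve $xy=0$ shows this step genuinely fails over test rings with $\nil_\infty(R)\neq 0$, so it cannot be waved through. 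Second, the same issue infects your adjugate step: $\mathrm{adj}(M)\bigl(Mu+Pv+Q\bigr)=0$ is implied by, but not equivalent to, $Mu+Pv+Q=0$ unless multiplication by $\det M$ is injective over the test ring (Lemma~\ref{uniqueness}, which is the point of Prop.~\ref{Newton}). Both gaps are rescued only because the statement concerns formal completions, so one may test exclusively on Artinian local $k$-algebras---a restriction you never impose, though it is precisely the dividing line between this theorem and any stronger (e.g.\ Henselian) variant, as the paper stresses. Third, the recursion you flag as ``the main obstacle'' is not a verification to be checked later but the core of the argument: it is the contraction/fixed-point lemma (the paper's Lemma~\ref{fixed-point}, applied as in Prop.~\ref{Newton}) showing that $v_0\mapsto v_0+t\hat B(x;H(x;v_0))$ is bijective on $R[[t]]^n$, which is what lets the high-order coordinates split off as free formal variables. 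As it stands, your proposal is a correct plan whose central step and two enabling lemmas are missing.
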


Grinberg-Kazhdan-Drinfeld's theorem gives the hope that a reasonable theory of perverse sheaves on formal arc spaces may exist. In \cite{Bouthier:2014vp}, it is proven that in a weak sense the formal finite dimensional model is independent of choices i.e if $\hat Y_y$ and $\hat Y'_{y'}$ are finite type formal models of $\hat X_x$
$$ (\cL X)_x \simeq \hat Y_y \hat\times \hat D^\infty \simeq \hat Y'_{y'}\hat\times \hat D^\infty$$
then there exists integers $m,n\in\N$ such that there exists an isomorphism
\begin{equation} \label{finite-equivalence}
	\hat Y_y \hat\times D^m \simeq \hat Y'_{y'}\hat\times D^n.
\end{equation}
As consequence in the case $k$ is a finite field, using the $\ell$-adic intersection complex on a scheme $Y$ of finite type of which $\hat Y_y$ is formal finite dimensional model of $\hat X_x$, we can define a canonical function $\IC_{\cL X}:\cL^\bullet X(k)\to\Ql$ which deserves the name of intersection complex function although we don't know yet to define the intersection complex on the space of non-degenerate arcs. If $x\in \cL X(k)$ is a non-degerate arc on $X$, and $\hat Y_y$ is a finite-dimensional formal as in \ref{Grinberg-Kazhdan}, then we set 
$$\IC_{\cL X}(x)=\IC_Y(y).$$
The existence of the isomorphism \eqref{finite-equivalence} implies that $\IC_Y(y)=\IC_{Y'}(y')$ so that this number doesn't depend on the choice of the finite type formal model as argued in \cite[Prop. 1.2]{Bouthier:2014vp}.

In order to define perverse sheaves on the space of non-degenerate arcs, one may hope a stronger version of the Drinfeld-Grinberg-Kazhdan theorem in which formal completions are replaced by strict Henselizations for instance. In other words, instead of formal charts as in \ref{Grinberg-Kazhdan}, one would like to construct Henselian charts. 
There are good reasons to believe that the analogue of Grinberg-Kazhdan theorem for henselizations doesn't hold. 

In \cite{Bouthier:2015tk}, Bouthier and Kazhdan attempt to construct certain type of coverings of the space of non-degenerate arcs which admit essentially smooth surjective map to schemes of finite type. The strategy of \cite{Bouthier:2015tk} consists in generalizing Drinfeld's construction in \cite{Drinfeld:2002vda}, which works over local Artinian test rings, to general test rings. There are unfortunately some gaps in the construction of \cite{Bouthier:2015tk}. 

Both constructions \cite{Drinfeld:2002vda} and \cite{Bouthier:2015tk} rely in the Weierstrass preparation theorem and the Newton method of solving algebraic equation by approximation. We will study with some care the Weierstrass preparation and division theorem, and the Newton method over a general test rings. The study of the Weierstrass division theorem reveals that the coordinate ring of the arc space of the curve of equation $xy=0$ contains non-zero functions which vanish at all points to the infinite order. We observe that these functions are annihilated in completed local rings but generally not in henselizations. We also observe that if $X$ is a $k$-scheme of finite type, henselizations of the ring of coordinates $X\times \bA^\infty$ don't contain non-zero elements which vanish at all points to the infinite order.

Nevertheless, following the method of \cite{Drinfeld:2002vda} and \cite{Bouthier:2015tk},
one can obtain a theorem on local structure of non-degenerate arc space. Although the description of the local structure is not very pleasant, one can derive from it the following statement.

\begin{theorem}\label{slice} Let $X$ be a $k$-scheme locally of complete intersection.
	Let $x\in \cL X(k)$ be a non-degenerate arc. There exist a $k$-scheme $Y$ of finite type, a point $y\in Y(k)$, a morphism $\phi:Y\to \cL X$ mapping $y$ to $x$ such that for every field $k'$ containing $k$, for every point $y'\in Y(k')$ mapping to $x'\in \cL X(k')$, there exists an isomorphism 
	$$\cL X^\ehat _{x'} \hat\times \hat D^\infty \simeq \hat Y_{y'} \hat\times \hat D^\infty.$$ 
\end{theorem}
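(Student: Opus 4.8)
The plan is to follow the embedded Weierstrass--Newton strategy of Drinfeld, upgrading it from a single formal completion at $x$ to a family over a finite-type base. First I would pass to an embedded local model: since $X$ is locally a complete intersection, after replacing it by an affine neighborhood of the closed point of the arc I may write $X=V(f_1,\dots,f_c)\subseteq\bA^N$ with $(f_1,\dots,f_c)$ a regular sequence and $c=N-\dim X$. An arc over a test ring $R$ is then a tuple $(x_1(t),\dots,x_N(t))\in R[[t]]^N$ with $f_j(x(t))=0$, and the coordinates of $\cL X$ are the coefficients $x_{i,n}$ subject to the equations $f_{j,n}=0$ obtained by expanding $f_j(x(t))=\sum_n f_{j,n}t^n$.

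Second, I would turn non-degeneracy into a numerical invariant. Let $J(t)=\bigl(\partial f_j/\partial x_i\bigr)(x(t))$, a $c\times N$ matrix over $k[[t]]$. The hypothesis that the arc meets the smooth locus $X'$ generically says that some $c\times c$ minor of $J(t)$ is nonzero in $k((t))$; I fix a column set $I$ of size $c$ whose minor $\delta(t)$ has smallest $t$-adic valuation $e:=\val_t(\delta)<\infty$. This single integer $e$ controls everything: the coordinates $x_{i,n}$ with $i\notin I$ will become free and furnish the factor $\hat D^\infty$, while those with $i\in I$ get solved for by Newton's method once enough low-order data has been pinned down.

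Third, I would construct $Y$ and $\phi$ and read off the isomorphism. Choosing a cutoff $\ell$ depending only on $e$ (the Newton threshold, $\ell\sim 2e$), let $Y$ be the locally closed subscheme of the jet scheme $\cL_\ell\bA^N$ defined by $f_{j,n}=0$ for $n\le\ell$ together with the locally closed condition $\val_t(\delta)=e$; this is of finite type and carries the tautological point $y$ given by the $\ell$-jet of $x$. The morphism $\phi:Y\to\cL X$ extends a point of $Y$ to a genuine arc: treating the $x_{i,n}$ with $i\notin I$, $n>\ell$, as free parameters, I solve $f_j(x(t))=0$ for the remaining coefficients. Since $\delta$ has valuation exactly $e$ at every point of $Y$, the Weierstrass division theorem lets me divide by $\delta$ with controlled remainder and Hensel's lemma (Newton iteration) produces a unique solution for the dependent coordinates $x_{i,n}$, $i\in I$. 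At every point $y'\in Y(k')$ this splits the completed coordinate ring of $\cL X$ into the image of $\hat Y_{y'}$ and a power-series algebra in the free coordinates. The splitting is canonical only up to finitely many of the $x_{i,n}$, which is exactly why the stated isomorphism $\cL X^\ehat_{x'}\hat\times\hat D^\infty\simeq\hat Y_{y'}\hat\times\hat D^\infty$ carries the free disk on both sides, as in the stabilized finite-equivalence \eqref{finite-equivalence}.

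The hard part will be the uniformity over the whole of $Y$: making the Weierstrass division and the Newton iteration converge over a general, non-Artinian test ring and at an arbitrary point $y'$ rather than only the central $x$. Over Artinian local rings, which is Drinfeld's setting, convergence is automatic, but for a finite-type base one must guarantee that $e$ does not jump and that $\delta$ becomes invertible with uniformly bounded denominators; this is precisely the role of the defining condition $\val_t(\delta)=e$, which forces the Newton threshold to stay constant along the family and is where I expect the gaps of earlier treatments to live. A secondary subtlety, already flagged in the introduction, is that the coordinate ring of a singular arc space carries nonzero functions vanishing to infinite order; because these die in the completed local rings $\cL X^\ehat_{x'}$ (though not in henselizations), phrasing the theorem with formal completions rather than henselizations is what keeps the isomorphism clean.
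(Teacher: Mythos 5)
Your construction has a genuine gap, and in fact the scheme $Y$ you propose fails the theorem already at the marked point $y$. Scheme-theoretically, your condition $\val_t(\delta)=e$ is the \emph{closed} condition $\delta_0=\cdots=\delta_{e-1}=0$ together with $\delta_e$ invertible. This freezes the Weierstrass polynomial of $\delta$ at $t^e$, which is exactly what makes your Newton iteration work over arbitrary test rings (multiplication by $t^e\cdot\mathrm{unit}$ is injective over any ring, so you never meet the pathologies of Lemma \ref{injective}); but it also deletes precisely the deformation directions in which those low coefficients become nonzero nilpotents, and these are the directions that carry the obstructions of the deformation problem. In particular the splitting you invoke in your third step does not exist: if $\tilde x$ is an infinitesimal deformation of $x'$ inside $\cL X$, the coefficients of $\delta(\tilde x)$ in degrees $<e$ are nilpotent but generally nonzero, so the $\ell$-jet of $\tilde x$ does \emph{not} define a point of $\hat Y_{y'}$. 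A concrete test (char $k\neq 2,3$): $X=V(y^2-x^3)\subset\bA^2$, $x'=(t^2,t^3)$, $\delta=\partial f/\partial y=2y$, $e=3$, $\ell=6$. Your $Y$ has coordinates $X_0,\dots,X_6,Y_3,\dots,Y_6$ (with $Y_3$ invertible) and equations $[X^3]_n=0$ for $n\le 5$, $[X^3]_6=Y_3^2$, where $[X^3]_n$ is the coefficient of $t^n$ in $X(t)^3$. At $y$ (i.e.\ $X_2=Y_3=1$, other coordinates $0$), the two equations $n=4,5$ have unit Jacobian in $(X_0,X_1)$ and all their terms lie in the ideal $(X_0,X_1)$, so by the formal implicit function theorem they generate exactly $(X_0,X_1)$; modulo that ideal the equations $n\le 3$ vanish identically and the equation $n=6$ becomes the smooth hypersurface $X_2^3=Y_3^2$. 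Hence $\hat Y_y$ is formally smooth. But $\cL X$ is \emph{not} formally smooth at $x'$: the first-order deformation $X=t^2+\epsilon a_0$, $Y=t^3+\tfrac{3}{2}\epsilon a_0 t$ with $a_0\neq 0$ does not lift to $k[\epsilon]/(\epsilon^3)$, since the second-order equation $2tb'=3t^2a'+\tfrac{3}{4}a_0^2+\cdots$ forces $a_0^2=0$. Since formal smoothness of $\hat Z_z\hat\times\hat D^\infty$ implies formal smoothness of $\hat Z_z$, no isomorphism $\hat Y_{y}\hat\times\hat D^\infty\simeq \cL X^\ehat_{x'}\hat\times\hat D^\infty$ can exist. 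So the defect is not merely in your argument but in the object: the truncated jet scheme with frozen valuation is systematically too smooth.

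The repair is what the paper's whole apparatus is for: the monic polynomial $q$ of degree $d$ (tracking the Weierstrass polynomial of $\xi^2$) must be kept as a \emph{variable} of the slice, deforming away from $t^{d}$, and the defining equations must be congruences modulo $tq$ of the Newton-transformed system ($\hat B(\bar x,f(\bar x))\equiv 0$ and $\xi^2(\bar x)\equiv u_0q \bmod tq$), not truncations of $f$ modulo a power of $t$. That is exactly the slice $Y_d$ of Section 7, obtained by pulling back the zero section $Q_{d+1}\to S_{d+1}^{n+1}$ along $T_d\to S_{d+1}^{n+1}$. The paper also never exhibits a splitting of completed coordinate rings; instead it composes the chain $Y_d\to T_d\to N_{1,d}\to N_1\leftarrow N_0\to\cL X$, checking that each arrow is a formal equivalence on formal completions: morphisms of Weierstrass type, the zero-section base change (using that the completion of $S_{d+1}$ along the zero section agrees with the completion of the zero section), and the $\nil_\infty$-quasi-isomorphism $N_0\to N_1$ of Prop.\ \ref{Newton}, which can be inverted on points of the slice only because the slice's coordinate ring $R_d$ is Noetherian, hence $\nil_\infty(R_d)=0$. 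Your instinct about where the difficulty lies (uniformity over non-Artinian test rings, functions vanishing to infinite order) is correct, but freezing $\val_t(\delta)$ resolves that difficulty by destroying the theorem rather than proving it.
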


Schemes of finite type as in the above statement should be thought of as good slices in non-degenerate arc spaces. One may hope to construct a reasonable theory of perverse sheaves on arc spaces using these slices. For instance, one may think of the hypothetical intersection complex $\IC_{\cL X}$ of $\cL X$ as the unique object whose restriction to every slice $Y$ of finite type is the intersection complex $\IC_Y$ of $Y$ up to a shift by the dimension.

Although it may be possible to drop the complete intersection assumption, in this paper, I want, as much as possible, to stick  with the computational approach of \cite{Drinfeld:2002vda}. One should also note that in many moduli problems, slices are given to us by replacing maps from the formal disc to a given target scheme, or algebraic stack, by maps from a given curve. This idea is in fact behind the calculations in the complete intersection case as in \cite{Drinfeld:2002vda}. Drinfeld explains how one can approach the general case by means of the Newton groupoid in the Geometric Langlands Seminar in Spring 2017.

\section{Formal completion and tensor product}

There is a potential danger in dealing with formal series with coefficients in a varying commutative ring $R$ for the operation of tensor product doesn't commute with the operation of completion. The purpose of this section is to provide warnings by means of some examples where what we observe isn't quite what we would naively expect. Our examples are all related to the problem of Weierstrass division. 

For a commutative ring $R$, an ideal $I$ of $R$, and an $R$-module $M$, we define the $I$-completion of $M$ to be the limit 
\begin{equation*}
	\hat M_I=\plim M/I^n M.
\end{equation*}
For instance, the $p$-completion of the ring $\Z$ of integers is the ring $\Z_p$ of $p$-adic integers. At this place, it may be necessaty to clear some confusion in terminology: although the field $\Q_p$ of $p$-adic numbers, defined as the quotient field of $\Z_p$, is $p$-adically complete in the sense that all Cauchy sequences in $\Q_p$ for the $p$-adic distance converge, the $p$-completion of $\Q_p$ is zero.

For every commutative ring $R$, the ring of formal series $R[[t]]$ consists in set of  all series of the form $x=x_0+x_1 t + x_2 t^2 + \cdots$ where $x_0,x_1,\ldots$ are arbitrary elements subjected to usual rules of addition and multiplication. It contains the ring $R[t]$ of polynomials consisting of $x=x_0+x_1 t + x_2 t^2 + \cdots$ with coefficients $x_0,x_1,\ldots$ equal to $0$ except for finitely many of them. We may also define $R[[t]]$ as the $t$-completion of $R[t]$:
\begin{equation*}
	R[[t]]=\plim R[t]/(t^n).
\end{equation*} 

As unproblematic as this definition may appear, some good cares are in order. If $R$ is a commutative ring $R$, $I$ is an ideal of $R$, $M$ is a $R$-module, for an $R$-algebra $R'$, the canonical morphism
\begin{equation}\label{tensor-completion}
	\hat M_I \otimes_R R' \to (M\otimes_R R')^\ehat_{IR'}
\end{equation}
may not be an isomorphism.

For instance, $\Q[[t]] \otimes_\Q \C$ consists of finite complex linear combinations of formal series with rational coefficients while $\C[[t]]$ consists of all formal series with complex coefficients. In this example the map \eqref{tensor-completion} is injective but not surjective.

Now we consider the ring $R=\Z[t]$ of polynomial with integers coefficients, the ideal $I$ of $R$ generated by $t$ and the module 
\begin{equation}\label{bizzard}
	M=\Z[t]/(t-p).
\end{equation}
As an abelian group, $R$ is canonically isomorphic with $\Z$. In fact, we may see $R$ as the $\Z[t]$-module $\Z$ on which $t$ acts as the multiplication by $p$. The $t$-completion of $M$  is thus $\hat M_I=\Z_p$. Now if $R'=\Q[t]$, then on the one hand we have $\hat M_I\otimes_R R'=\Q_p$, but on the other hand the $t$-completion of $\Q[t]/(t-p)$ is zero as $p\in\Q^\times$. In particular, in this example the map \eqref{tensor-completion} is surjective but not injective.

The following lemma won't be used in the sequel except for a side observation, but it may serve as yet another warning. 

\begin{lemma} \label{base-change-lemma}
Let $R$ is a commutative ring $R$, $I$ a finitely generated ideal of $R$ and $\bar R=R/I$. Then the natural map $\phi:R[[t]]\otimes_R \bar R \to \bar R[[t]]$ is an isomorphism. The assertion doesn't hold without the assumption $I$ finitely generated
\end{lemma}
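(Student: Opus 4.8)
The plan is to identify both sides with quotients of $R[[t]]$ and to reduce the statement to the comparison of two ideals. First I would invoke right-exactness of the tensor product to write $R[[t]]\otimes_R\bar R = R[[t]]/IR[[t]]$, where $IR[[t]]$ denotes the ideal of $R[[t]]$ generated by $I$. Under this identification $\phi$ becomes the coefficient-wise reduction map $R[[t]]\to\bar R[[t]]$, $\sum_i a_i t^i\mapsto\sum_i(a_i\bmod I)\,t^i$. This map is visibly surjective, so $\phi$ is always surjective, and its kernel is exactly $K=\{\sum_i a_i t^i : a_i\in I \text{ for all } i\}$. Hence injectivity of $\phi$ is equivalent to the equality $IR[[t]]=K$.

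The inclusion $IR[[t]]\subseteq K$ holds for any ideal, since in a product $\sum_j f_j g_j$ with $f_j\in I$ each coefficient is an $R$-linear combination of the $f_j$ and therefore lies in $I$. The reverse inclusion is where finite generation enters. Writing $I=(f_1,\ldots,f_m)$ and taking $\sum_i a_i t^i\in K$, I would expand each coefficient as $a_i=\sum_{j=1}^m f_j c_{j,i}$ with $c_{j,i}\in R$, assemble the power series $g_j=\sum_i c_{j,i}t^i\in R[[t]]$, and observe that $\sum_{j=1}^m f_j g_j=\sum_i a_i t^i$. This yields $K=IR[[t]]$, whence $\phi$ is an isomorphism. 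The only substantive point is this repackaging, which works precisely because one finite set of generators serves simultaneously for all the coefficients $a_i$.

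For the failure without finite generation I would exhibit a counterexample. Take $R=k[x_1,x_2,\ldots]$ over a field $k$, let $I=(x_1,x_2,\ldots)$ be the augmentation ideal, and consider $s=\sum_{i\geq 1}x_i t^i$. All coefficients of $s$ lie in $I$, so $s\in K$ and $\phi$ annihilates its class. To see $s\notin IR[[t]]$, note that any element of $IR[[t]]$ is a \emph{finite} sum $\sum_{j=1}^m f_j g_j$ with $f_j\in I$; the finitely many $f_j$ involve only finitely many variables $x_1,\ldots,x_N$, so each $f_j$ lies in the finitely generated ideal $(x_1,\ldots,x_N)$, and therefore so does every coefficient of the sum. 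Since $x_{N+1}\notin(x_1,\ldots,x_N)$ — its image in $R/(x_1,\ldots,x_N)\cong k[x_{N+1},x_{N+2},\ldots]$ is nonzero — the series $s$, whose coefficient of $t^{N+1}$ is $x_{N+1}$, cannot be so expressed. Thus $IR[[t]]\subsetneq K$, the class of $s$ is a nonzero element of $\ker\phi$, and $\phi$ is not injective.

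The main obstacle is this last point: recognizing the exact shape of $IR[[t]]$ — that each of its elements has all of its coefficients confined to a single finitely generated subideal of $I$ — and using this to separate $IR[[t]]$ from the coefficient-wise ideal $K$. Once this is in hand, both the positive statement and the counterexample follow, and everything else is routine bookkeeping.
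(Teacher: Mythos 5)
Your proof is correct and takes essentially the same route as the paper's: identify $R[[t]]\otimes_R\bar R$ with $R[[t]]/IR[[t]]$, observe that the kernel of coefficient-wise reduction is the coefficient-wise ideal $K$, use a single finite generating set of $I$ to assemble the series $g_j$ and conclude $K=IR[[t]]$, and then take $k[x_1,x_2,\ldots]$ with the augmentation ideal as the counterexample. The only difference is that you actually verify the key claim of the counterexample --- that $\sum_i x_i t^i\notin IR[[t]]$ because any finite sum $\sum_j f_jg_j$ has all coefficients trapped in a finitely generated subideal $(x_1,\ldots,x_N)$ --- whereas the paper merely asserts this non-membership; your added paragraph is a genuine (and needed) completion of that step.
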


\begin{proof}
	Since $R\to \bar R$ is surjective, the map $R[[t]]\to \bar R[[t]]$ is surjective and so is $\phi:R[[t]]\otimes_R \bar R \to \bar R[[t]]$. We only need to prove that $\phi$ is injective. The injectivity of $\phi$ is equivalent to that the kernel of $R[[t]]\to \bar R[[t]]$ is the ideal of $R[[t]]$ generated by $I$. Let us denote $I[[t]]$ the kernel of $R[[t]] \to \bar R[[t]]$. This kernel consists in formal series $x=x_0+x_1 t+\cdots$ such that $x_n \in I$ for all $n$. If $a_1,\ldots,a_m$ is a system of generators of $I$, then for every $n\in\N$, the coefficient $x_n$ of $x$ can be written possibly non uniquely in the form $x_n=a_1 y_{n,1}+\cdots+a_m y_{n,m}$. If we choose such an expression for each $n\in\N$ then we have $x=a_1 y_1 +\cdots +a_m y_m$ where $y_i$ is the formal series $y_i=y_{i,0}+y_{i,1}t+\cdots$. It follows that $x$ belongs to the ideal of $R[[t]]$ generated by $I$.
	
	The assertion doesn't hold without the assumption that $I$ is finitely generated. Let $R=k[x_0,x_1,x_2,\ldots]$ be the ring of polynomials in infinitely many variables $x_0,x_1,x_2,\ldots$ with coefficients in a field $k$, $I$ the ideal of $R$ generated by $x_0,x_1,x_2,\ldots$, $\bar R=R/I=k$. Then the formal series $x_0+x_1t+x_2 t^2+\cdots$, lying in the kernel of $R[[t]]\to k[[t]]$, doesn't belong to  the ideal of $R[[t]]$ generated by $I$ but its $t$-completion.
\end{proof}

\section{Variations on the Weierstrass preparation theorem}

If $f$ is a nonzero germ of a holomorphic function at the origin in $\C^{n+1}$, given with coordinates $t,x_1,\ldots,x_n$, the classical Weierstrass preparation theorem asserts the existence of a germ of holomorphic function $u$ at the origin in $\C^{n+1}$ non vanishing at the origin and a polynomial
\begin{equation} \label{Weierstrass-polynomial}
	q=q_0 + q_1 t +\cdots + q_{d-1} t^{d-1} + t^d
\end{equation}
where $q_0,\ldots,q_{d-1}$ are germs of holomorphic functions at the origin $\C^n$, and vanishing at the origin, such that $f=uq$. 

Let $R_n=\C[[x_1,\ldots,x_n]]$ be the ring of formal series of variables $x_1,\ldots,x_n$. Let $f\in R_n[[t]]$ be a formal series of variables $t,x_1,\ldots,x_n$. We denote $\bar f\in \C[[t]]$ the reduction of $f$ modulo $x_1,\ldots,x_n$ and write $\bar f=t^d \bar u$ where $u\in\C[[t]]^\times$. By usual manipulations with formal series, we can prove that there exist a unique formal series $u\in R_n[[t]]^\times$ with reduction $\bar u\in \C[[t]]^\times$ and a unique polynomial $q\in R_n[t]$ of the form \eqref{Weierstrass-polynomial} whose coefficients $q_0,\ldots,q_{d-1}\in R_n$ are formal series vanishing constant coefficients such that $f=uq$. The holomorphic content of the Weierstrass preparation theorem consists in showing that if $f$ is absolutely convergent in a neighborhood of $0$ in $\C^{n+1}$ then $q_0,\ldots,q_{d-1}$ are also absolutely convergent in a neighborhood of $0$ in $\C^n$. 

Let $A_n$ denote the ring of germs of holomorphic functions defined in a neighborhood of $0$ in $\C^n$. $A_n$ is a Henselian local ring whose completion with respect to the maximal ideal is $R_n=\C[[x_1,\ldots,x_n]]$. Every $f\in A_n[[t]]$ can be factored uniquely in the form $f=uq$ with $u\in R_n[[t]]^\times$ and $q\in R_n[t]$ a Weierstrass polynomial as above but there is no guarantee that the coefficients $q_0,\ldots,q_{d-1}$ belong to $A_n$ unless $f$ is assumed to be a convergent series. We draw the conclusion that the existence of Weierstrass factorization for $f\in A[[t]]$ as $f=uq$ where $u\in A[[t]]^\times$ and $q\in A[t]$ is a Weierstrass polynomial doesn't hold generally for Henselian local ring $A$ as opposed to complete local rings. The aim of this section is to study the existence of the Weierstrass factorization over a general test ring and its connection with the geometry of the arc space of the affine line.

The arc space $\cL\bA^1$ of the affine line $\bA^1$ is spectrum of the ring of polynomials with countably infinite number of variables:
\begin{equation}
	\cL \bA^1=\Spec (k[x_0,x_1,x_2\ldots])
\end{equation}
for a $R$-point $x\in \cL \bA^1(R)$ is uniquely represented by a formal series
$x=x_0+x_1t+\cdots$
for every $k$-algebra $R$. 
We consider the space $\cL^\bullet\bA^1$ of non-degenerate arcs with respect to the open subset $\Gm$ of $\bA^1$. On the level of $k$-points we have
\begin{equation}
	\cL^\bullet\bA^1(k)=k[[t]]-\{0\}
\end{equation}
The description of set of $R$-points of $\cL^\bullet\bA^1$ is as follows:

\begin{definition} \label{non-degenerate-formal-series}
	For every commutative ring $R$, a formal series $x\in R[[t]]$ is said to be non-degenerate, and denoted $x\in \cL^\bullet\bA^1(R)$, if for every ring homomorphism $\nu_K:R\to K$ from $R$ to a field $K$ the formal series $\nu_K(x)$ is a nonzero element of $K[[t]]$, or in other words: $$\nu_K(x) \in K[[t]] \cap K((t))^\times.$$
\end{definition}

A strictly stronger condition is $x\in R[[t]] \cap R((t))^\times$ where $R((t))$ is the ring of Laurent formal series with coefficients in $R$. In other words, we have an inclusion 
\begin{equation} \label{first-inclusion}
	R[[t]]\cap R((t))^\times \subset \cL^\bullet \bA^1(R).
\end{equation}
The functor $R\mapsto R[[t]]\cap R((t))^\times$ is closely related to the affine Grassmannian of $\Gm$ defined as the the sheafification of the functor $R\mapsto R((t))^\times/ R[[t]]^\times$. Although our purpose is not to review the theory of affine Grassmannian, we will borrow some familiar arguments from that theory helping to clarify the relation between the properties of being invertible in $R((t))^\times$ and the existence of a strict Weierstrass factorization. 

\begin{definition}
	Let $R$ be a commutative ring. 
	
	\begin{enumerate}
		\item We will call a  Weierstrass factorization of a formal series $x\in R[[t]]$ an expression of the form $x=uq$ where $u\in R[[t]]^\times$ and $q$ is a monic polynomial of some degree $d$. 
		\item We will call the expression $x=uq$ as above a strict Weierstrass factorization of $x$ if $q$ divides $t^n$ for some $n\in\N$ i.e. $t^n \in (q)$ where $q$ is the ideal $(q)$ of $R[[t]]$ generated by $q$.
	\end{enumerate}
	\end{definition}

If we denote $\cL^\beta\bA^1(R)$ and $\cL^{\beta+}\bA^1(R)$ the set of formal series $x\in R[[t]]$ having a Weierstrass factorization and strict Weierstrass factorization respectively:
\begin{equation}
	\cL^{\beta+}\bA^1(R) \subset \cL^\beta\bA^1(R) \subset \cL^\bullet\bA^1(R).
\end{equation}
We will prove that the equality
\begin{equation}
	R[[t]]\cap R((t))^\times = \cL^{\beta+} \bA^1(R)
\end{equation}
holds for all commutative rings $R$.

\begin{proposition} \label{algebra}
For every formal series $x\in R[[t]]$, the following conditions are equivalent:
\begin{enumerate}
	\item $x\in R[[t]]\cap R((t))^\times$;
	\item $R[[t]]/(x) $ is a projective $R$-module of finite type, $(x)$ being the ideal of $R[[t]]$ generated by $x$, and $t^n\in (x)$ for some integer $n\in\NN$;
	\item $x$ admits a strict Weierstrass factorization: $x=uq$ where $u\in R[[t]]^\times$ and $q$ is a monic polynomial dividing $t^n$ for some $n\in\N$.
\end{enumerate}
If these conditions are satisfied, then the strict Weierstrass factorization of $x$ is unique.
\end{proposition}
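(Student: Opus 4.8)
The plan is to prove the cycle $(3)\Rightarrow(1)\Rightarrow(2)\Rightarrow(3)$ and then treat uniqueness separately. The implication $(3)\Rightarrow(1)$ is immediate: if $x=uq$ with $u\in R[[t]]^\times$ and $q\mid t^n$, then $t^n\in(q)=(x)$, so $x$ divides the unit $t^n$ of $R((t))$ and is therefore itself a unit of $R((t))$. For the converse directions I would first record the two basic consequences of $(1)$: writing $x^{-1}=\sum_{i\ge -N}c_i t^i\in R((t))$ and setting $y=t^N x^{-1}\in R[[t]]$, one has $xy=t^N$; in particular $x$ is a non-zero-divisor in $R[[t]]$ and $t^N\in(x)$. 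Moreover $M:=R[[t]]/(x)$ is the cokernel of multiplication by $\bar x:=x\bmod t^N$ on the finite free module $A:=R[t]/(t^N)$, so $M\cong R[t]/(t^N,\bar x)$ is finitely presented and its formation commutes with base change $R\to R'$.

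For $(1)\Rightarrow(2)$ the only real content is that $M$ is projective; since $M$ is finitely presented it suffices to show it is flat, and flatness is local on $\Spec R$, so I may assume $R$ local with maximal ideal $\fm$ and residue field $\kappa$ and prove that $M$ is free. Reducing modulo $\fm$, the image of $x$ in $\kappa[[t]]$ has some order $d$, so $M\otimes_R\kappa=\kappa[t]/(t^d)$ and $1,t,\dots,t^{d-1}$ generate $M$ by Nakayama. The heart of the matter is their $R$-linear independence, equivalently $R[t]_{<d}\cap xR[[t]]=0$. This is where the relation $xy=t^N$ is indispensable, replacing the completeness hypothesis of classical Weierstrass division: if $r\in R[t]_{<d}$ satisfies $r=xg$, then $yr=t^N g$, so $\bar y r=0$ in $A$, and examining the top $d$ coefficients (those of $t^{N-d},\dots,t^{N-1}$) of this product yields a homogeneous system whose matrix is the Hankel matrix $Y_{s,i}=y_{N-1-s-i}$. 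Since $y$ is $t$-distinguished of degree $N-d$ (its coefficients below $t^{N-d}$ lie in $\fm$ while the coefficient of $t^{N-d}$ is a unit), $Y$ is anti-triangular modulo $\fm$ with units on the anti-diagonal, so $\det Y$ is a unit and $r=0$. This gives $M\cong R^d$ over each local ring, hence $M$ is finitely presented and locally free, i.e. projective; together with $t^N\in(x)$ this is $(2)$. I expect this local freeness to be the main obstacle, precisely because naive $t$-adic or $\fm$-adic approximation runs into $\bigcap_k\fm^k$ and fails to terminate without the extra finiteness packaged in $xy=t^N$.

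For $(2)\Rightarrow(3)$ I would use the characteristic polynomial. On the finite projective module $M$, multiplication by $t$ is an endomorphism $T$ that is nilpotent because $t^N\in(x)$. When $M$ has constant rank $d$, set $q=\det(t\cdot\mathrm{id}-T)$, a monic polynomial of degree $d$; Cayley--Hamilton gives $q(T)=0$, so evaluating at $1\in M$ shows $q\in(x)$. The induced surjection $R[[t]]/(q)\twoheadrightarrow R[[t]]/(x)=M$ is a map between projective modules of the same rank $d$ (the source is free of rank $d$ by division by the monic $q$), hence an isomorphism; thus $(q)=(x)$, and since $q$ is a non-zero-divisor we get $x=uq$ with $u\in R[[t]]^\times$, while $q\mid t^n$ follows from $t^n\in(x)=(q)$. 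In general the rank is only locally constant and bounded (as $M$ is finitely generated), so I would decompose $R$ into the finite product of the clopen pieces of $\Spec R$ where the rank equals $d$, run the argument on each factor, and finally pad each $q_{(d)}$ to a common degree $D$ by multiplying by the monic unit $t^{D-d}+1$, assembling a single global monic $q$.

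Finally, for uniqueness suppose $x=u_1q_1=u_2q_2$ are two strict factorizations. Then $(q_1)=(x)=(q_2)$, so $q_1=q_2 a$ and $q_2=q_1 b$ with $a,b\in R[[t]]$; since $q_1$ is a non-zero-divisor, $ab=1$, so $a$ is a unit. Dividing the polynomial $q_1$ by the monic polynomial $q_2$ shows the quotient $a$ is in fact a polynomial, and $ab=1$ makes it a unit of $R[t]$; comparing degrees and leading coefficients, using that the non-constant coefficients of a unit of $R[t]$ are nilpotent, forces $a=1$, whence $q_1=q_2$ and then $u_1=u_2$.
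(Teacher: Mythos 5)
Your $(3)\Rightarrow(1)$ and $(1)\Rightarrow(2)$ are correct, and the latter is a genuinely different argument from the paper's: the paper deduces flatness of $R[[t]]/(x)$ from the snake lemma applied to multiplication by $x$ on $0\to R[[t]]\to R((t))\to R((t))/R[[t]]\to 0$, using that $R((t))/R[[t]]\simeq t^{-1}R[t^{-1}]$ is free, whereas you localize and exhibit an explicit basis via Nakayama and the anti-triangular Hankel determinant built from $y=t^Nx^{-1}$. The genuine gap is in $(2)\Rightarrow(3)$, at the parenthetical ``the source is free of rank $d$ by division by the monic $q$.'' Division of a power series by a general monic polynomial is precisely what fails over a non-complete ring, and this failure is the recurring theme of the paper: for $R=k$ and $q=1+t$ one has $R[[t]]/(q)=0$, and for $R=k[a]$ and $q=t-a$ the module $R[[t]]/(q)$ is not even finitely generated. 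So monicity alone tells you nothing about $R[[t]]/(q)$, and your comparison of two rank-$d$ projectives never gets started. What rescues the argument is the nilpotency of $T$, which you state but never use: it forces the non-leading coefficients of $q=\det(t\cdot\mathrm{id}-T)$ to be nilpotent (their images vanish in every residue field $k_\fp$), hence $t^m\in(q)R[t]$ for some $m$, hence $t$ acts nilpotently on $R[t]/(q)$ and $R[t]/(q)\to R[[t]]/(q)$ is an isomorphism onto a free module of rank $d$. This is exactly the step the paper secures first, proving $t^n=qq'$ with $q'$ the characteristic polynomial of $t$ on $(x)/(t^n)$, before identifying $R[t]/(q)$ with $R[[t]]/(q)$. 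The same omission undermines your two appeals to ``$q$ is a non-zero-divisor'' (a monic polynomial can annihilate a nonzero power series in $R[[t]]$ --- this is what the paper's schemes $Z_d$ record; one needs $t^n\in(q)$ first) and the uniqueness step ``dividing $q_1$ by the monic $q_2$ shows the quotient $a$ is a polynomial,'' which requires $q_2R[[t]]\cap R[t]_{<\deg q_2}=0$, again a consequence of nilpotency of $t$ modulo $q_2$, not of monicity.

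Separately, your padding device for the non-constant-rank case contradicts the uniqueness you prove immediately afterwards. Since $t^{D-d}+1$ is a unit of $R[[t]]$, the polynomial $q_{(d)}(t^{D-d}+1)$ generates the same ideal of $R[[t]]$ as $q_{(d)}$; if ``$q$ divides $t^n$'' is read as $t^n\in(q)R[[t]]$, then on a component of rank $d<D$ you have manufactured two distinct strict factorizations of the same $x$, one of degree $d$ and one of degree $D$, so uniqueness fails under that reading (already $x=t$ over $R=k$ would admit both $q=t$ and $q=t^2+t$). Under the reading that makes uniqueness true, and which the paper uses elsewhere (in Prop.~\ref{ind-scheme}, $q$ divides $t^n$ means $qq'=t^n$ with $q'$ monic, equivalently $q\equiv t^{\deg q}$ modulo the nilradical), the padded polynomial is not strict at all: modulo any prime it reduces to $t^d(t^{D-d}+1)\neq t^D$. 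So padding cannot assemble the components; one must either state the result componentwise or allow $q$ to have locally constant degree --- a genuine wrinkle that the paper itself sidesteps by assuming constant rank in its proof of $(2)\Rightarrow(3)$.
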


\begin{proof}
First we prove that if $x\in R[[t]] \cap R((t))^\times$ then, for some $n\in\N$, $t^n$ belongs to the ideal $(x)$ of $R[[t]]$ generated by $x$. Let $y$ be the inverse of $x$ in $R((t))^\times$, we have $x y=1$. Since $y\in R((t))$  there exists $n\in\N$ such that $t^ny\in R[[t]]$. It follows that $t^n=x(y t^n) \in (x)$. It will also follow that $R[[t]]/x R[[t]]$ is a $R$-module of finite type as a quotient of the $R$-module of finite type $R[[t]]/t^n R[[t]]$.

Next we prove that if $x\in R[[t]] \cap R((t))^\times$ then $R[[t]]/x R[[t]]$ is a flat $R$-module. 	We consider the commutative diagram
\begin{equation}
\begin{tikzcd}
0 \arrow{r} & 
R[[t]] \arrow{r} \arrow{d}[swap]{x} & 
R((t)) \arrow{r} \arrow{d}[swap]{x} &
R((t))/R[[t]] \arrow{r} \arrow{d}[swap]{x} &
0\\
0 \arrow{r} & 
R[[t]] \arrow{r}  & 
R((t)) \arrow{r}  &
R((t))/R[[t]] \arrow{r}  &
0
\end{tikzcd}
\end{equation}
with exact horizontal lines. Since $x\in R((t))^\times$, the middle vertical map is an isomorphism. The snake lemma provides us with an exact sequence of $R$-modules
\begin{equation}
\begin{tikzcd}
0 \arrow{r} & 
R[[t]]/x R[[t]] \arrow{r}  & 
R((t))/R[[t]] \arrow{r}{x} &
R((t))/R[[t]] \arrow{r}  &
0\end{tikzcd}
\end{equation}
For $R((t))$ decomposes as a direct sum of $R$-modules
	$$R((t))=t^{-1} R[t^{-1}] \oplus R[[t]]$$
we can identify $R((t))/R[[t]]$ with $t^{-1} R[t^{-1}]$. It follows that $R((t))/R[[t]]$ is flat as $R$-module and thus $R[[t]]/x R[[t]]$ is also flat as $R$-module.

Now we prove that if $R[[t]]/x R[[t]]$ is a projective $R$-module of finite rank $d$ and if $t^n \in (x)$ for some $n\in\N$, then $x$ has a strict Weierstrass factorization. We consider the exact sequence of $R$-modules
\begin{equation} \label{q|t^n}
\begin{tikzcd}
0 \arrow{r} & 
(x)/(t^n)  \arrow{r}  & 
R[[t]]/(t^n)  \arrow{r} &
R[[t]]/(x) \arrow{r}  &
0\end{tikzcd}
\end{equation}
where $(x)$ and $(t^n)$ are ideals of $R[[t]]$ generated by $x$ and $t^n$ respectively. We know that $R((t))/t^n R[[t]]$ and $R((t))/xR[[t]]$ are flat $R$-modules of finite type. It follows that $(x)/(t^n) $ is a flat $R$-module. Being generated by $x, xt,\ldots,xt^{n-1}$, it is a $R$-module of finite type, and therefore $(x)/(t^n)$ is a projective module of finite type.

Let $q\in R[t]$ denote the characteristic polynomial of the $R$-linear operator $t$ acting on $R[[t]]/xR[[t]]$. By the Cayley-Hamilton theorem, the multiplication by $q$ in $R[[t]]/(x)$ is null. This implies that $q\in (x)$ or in other words $x$ divides $q$. 

We claim that $q$ divides $t^n$. Indeed if $q'$ is the characteristic polynomial of $t$ acting on the projective $R$-module of finite type $(x)/(t^n)$, then we have $t^n=qq'$ by the exact sequence \eqref{q|t^n}.

Now we consider morphisms of $R$-modules:
\begin{equation}\label{2-maps}
	R[t]/(q) \to R[[t]]/(q) \to R[[t]]/(x)
\end{equation}
where $R[t]/(q)$ is a free $R$-module of rank $R$ of rank $d$ since $q$ is a monic polynomial and $R[[t]]/(x)$ is also known to be projective $R$-modules of rank $d$ by assumption. Since $q$ divides $t^n$, $t$ acts nilpotently on $R[t]/q$. It follows that the map $R[t]/(q) \to R[[t]]/(q)$ is an isomorphism. It follows that $R[[t]]/q$ is also a free $R$-module of rank $d$. The surjective map $R[[t]]/(q) \to R[[t]]/(x)$ between projective $R$-modules of rank $d$ has to be an isomorphism. It follows that $x=qu$ where $u\in R[[t]]^\times$ and $q$ is a monic polynomial dividing $t^n$ thus $x$ has a strict Weierstrass factorization.

Now we assume that $x\in R[[t]]$ has a strict Weierstrass factorization $x=uq$ with $u\in R[[t]]^\times$ and $q$ is a monic polynomial dividing $t^n$ for some $n\in\N$ and we prove that $x\in R[[t]]\cap R((t))^\times$. In this case $q\in R((t))^\times$ and therefore $x\in R[[t]]\cap R((t))^\times$. 

Finally if $x=uq$ is a strict Weierstrass factorization then the linear maps \eqref{2-maps} are isomophisms of free $R$-modules. It follows that $q$ is the characteristic polynomial of $t$ acting on $R[[t]]/(x)$. Thus the strict Weierstrass factorization of $x$ is unique if it exists.
\end{proof}

\begin{corollary} \label{base-change}
	Let $x\in \cL^{\beta+}\bA^1(R)$ be a formal series with strict Weierstrass factorization. For every ring homomorphism $R\to R'$, the canonical map
	$(R[[t]]/(x))\otimes_R R' \to R'[[t]]/(x)$ of \eqref{tensor-completion} is an isomorphism. 
\end{corollary}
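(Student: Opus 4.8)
The plan is to reduce everything to the explicit finite free description of $R[[t]]/(x)$ furnished by Proposition \ref{algebra}, after which the troublesome interaction between tensor product and completion highlighted in Section 2 simply disappears.

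First I would invoke Proposition \ref{algebra} to write $x=uq$ with $u\in R[[t]]^\times$ and $q\in R[t]$ monic of some degree $d$ dividing $t^n$. Since $u$ is a unit, the ideals $(x)$ and $(q)$ of $R[[t]]$ coincide, so $R[[t]]/(x)=R[[t]]/(q)$. Because $q$ divides $t^n$, the operator $t$ acts nilpotently on this quotient, so, exactly as in the proof of Proposition \ref{algebra}, the natural map $R[t]/(q)\to R[[t]]/(q)$ is an isomorphism. Thus $R[[t]]/(x)$ is identified with the free $R$-module $R[t]/(q)$ on the basis $1,t,\ldots,t^{d-1}$.

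Next I would check that the strict Weierstrass factorization descends along $R\to R'$. Writing $u'$ and $q'$ for the images of $u$ and $q$ in $R'[[t]]$, the element $u'$ is again a unit (ring homomorphisms carry units to units), $q'$ is monic of the same degree $d$ (its leading coefficient $1$ maps to $1$), and $q'$ divides $t^n$ in $R'[t]$ (the relation $t^n=qq''$ base changes). Hence the image $x'=u'q'$ of $x$ satisfies condition (3) of Proposition \ref{algebra} over $R'$, and the same argument identifies $R'[[t]]/(x)=R'[[t]]/(x')$ with the free $R'$-module $R'[t]/(q')$ on the basis $1,t,\ldots,t^{d-1}$.

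Finally, quotients by a monic polynomial commute tautologically with base change: right-exactness of $\otimes_R R'$ applied to the presentation $R[t]\xrightarrow{q}R[t]\to R[t]/(q)\to 0$ gives $(R[t]/(q))\otimes_R R'=R'[t]/(q')$. The canonical map of the corollary is the reduction modulo $(x)$ of the natural arrow $R[[t]]\otimes_R R'\to R'[[t]]$ appearing in \eqref{tensor-completion}; under the two identifications above it becomes precisely this base-change isomorphism for $R[t]/(q)$. I do not expect a genuine obstacle here, since the content already lies in Proposition \ref{algebra}: the strict Weierstrass factorization replaces the infinite-dimensional completed quotient by the finite free module $R[t]/(q)$, for which base change is unproblematic. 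The only points to verify — that the factorization is preserved and that the polynomial identifications are compatible with the canonical map (both being induced by $R[t]\hookrightarrow R[[t]]$ and by functoriality in $R$) — are purely formal.
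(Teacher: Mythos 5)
Your proof is correct and follows essentially the same route as the paper: identify $R[[t]]/(x)$ with the finite free module $R[t]/(q)$ via the isomorphism established in the proof of Proposition \ref{algebra}, and then reduce to the evident base-change identity $R[t]/(q)\otimes_R R'=R'[t]/(q)$. Your explicit verification that the strict Weierstrass factorization descends along $R\to R'$ is a step the paper leaves implicit, but it is the same argument.
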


\begin{proof}
In the course of the proof of Prop.\ref{algebra}, we have showed that if $x=qu$ is a strict Weierstrass factorization of $x$, the $R$-linear map $R[t]/(q)\to R[[t]]/(x)$ is an isomorphism. The assertion can now be reduced to the obvious equality $R[t]/(q)\otimes_R R'=R'[t]/(q)$.
\end{proof}

For every prime ideal $\fp$ of $R$, we will denote $\nu_\fp:R\to k_\fp$ the canonical homomorphism from $R$ to the fraction field $k_\fp$ of $R/\fp$. A formal series $x\in R[[t]]$ is non-degenerate if and only if for every $\fp\in\Spec(R)$, $\nu_\fp(x)\in k_\fp[[t]]$ is nonzero. In this case, we will denote $\ord_\fp(x)$ the vanishing order of the formal series $\nu_\fp(x)\in k_\fp[[t]]$.

\begin{proposition} \label{criterion}
	Let $x\in R[[t]]$ be a non-degenerate formal series with coefficients in $R$. We consider the two assertions:
	\begin{enumerate}
\item[(1)] $x$ has a strict Weierstrass factorization;
\item[(2)] the function $\fp \to \ord_\fp(x)$ is a locally constant function on $\Spec(R)$.
	\end{enumerate}
For an arbitrary $k$-algebra $R$, (1) implies (2). The converse statement that (2) implies (1) holds under the assumption that ${\rm Nil}(R)^n=0$ for some $n\in \N$, ${\rm Nil}(R)$ being the nilradical of $R$. 
In particular if $R$ is either reduced or Noetherian then (1) and (2) are equivalent.
\end{proposition}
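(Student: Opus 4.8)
The plan is to route both implications through the characterization furnished by Proposition \ref{algebra}, namely that $x$ admits a strict Weierstrass factorization precisely when $x \in R[[t]] \cap R((t))^\times$, and through the base change statement of Corollary \ref{base-change}. For (1) $\Rightarrow$ (2), suppose $x$ has a strict Weierstrass factorization. Then Proposition \ref{algebra} tells me $R[[t]]/(x)$ is a projective $R$-module of finite type, and Corollary \ref{base-change} applied to $R \to k_\fp$ gives a canonical isomorphism $(R[[t]]/(x)) \otimes_R k_\fp \simeq k_\fp[[t]]/(\nu_\fp(x))$. Since $\nu_\fp(x)$ is a nonzero element of $k_\fp[[t]]$ of vanishing order $\ord_\fp(x)$, the right-hand side is a $k_\fp$-vector space of dimension exactly $\ord_\fp(x)$. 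Hence $\ord_\fp(x) = \dim_{k_\fp}\big((R[[t]]/(x)) \otimes_R k_\fp\big)$ is the fibre rank at $\fp$ of a finitely generated projective module, and such a rank function is locally constant on $\Spec(R)$; this is (2).

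For the converse under ${\rm Nil}(R)^n = 0$, the first move is to reduce invertibility in $R((t))$ to the reduced quotient. Write $I = {\rm Nil}(R)$ and $\bar R = R/I$. The spaces $\Spec(R)$ and $\Spec(\bar R)$ coincide and each $\nu_\fp$ factors through $\bar R$, so $\ord_\fp(\bar x) = \ord_\fp(x)$ and (2) holds for the reduced series $\bar x \in \bar R[[t]]$. Granting the reduced case below, $\bar x \in \bar R((t))^\times$. Now $R((t)) \to \bar R((t))$ is surjective with kernel $I((t))$, the Laurent series with coefficients in $I$; because $I^n = 0$ one has $I((t))^n \subseteq I^n((t)) = 0$, so this kernel is a nilpotent ideal. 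As an element of a commutative ring is a unit whenever its image modulo a nilpotent ideal is a unit, I conclude $x \in R((t))^\times$, and Proposition \ref{algebra} then yields the strict Weierstrass factorization. This step simultaneously delivers the reduced and Noetherian cases of the statement.

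It remains to treat a reduced ring $R$ with $\ord_\fp(x)$ locally constant. Here I would use that a locally constant $\NN$-valued function on the quasi-compact space $\Spec(R)$ takes finitely many values on clopen level sets, so that $R$ splits as a finite product $R = \prod_j R_j$ with $\ord_\fp(x) = d_j$ constant on $\Spec(R_j)$, and $R((t)) = \prod_j R_j((t))$. On each factor the coefficients $x_0, \ldots, x_{d_j-1}$ lie in every prime of the reduced ring $R_j$ and hence vanish, while $x_{d_j}$ lies in no prime and hence is a unit; thus $x = t^{d_j}(x_{d_j} + x_{d_j+1} t + \cdots)$ with a unit power series factor, so $x \in R_j((t))^\times$. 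Assembling the factors gives $x \in R((t))^\times$, completing the reduced case; note that this route needs only invertibility in $R((t))$, so I never have to manufacture a monic $q$ of uniform degree across the factors.

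The main obstacle is the lifting step of the second paragraph: the entire converse hinges on $I((t))$ being a nilpotent ideal, which requires the uniform bound ${\rm Nil}(R)^n = 0$ rather than mere pointwise nilpotence of coefficients. Without it, a series whose coefficients are nilpotent of unbounded index fails to be nilpotent in $R((t))$, and units no longer lift along $R((t)) \to \bar R((t))$; this is exactly the mechanism responsible for the failure of (2) $\Rightarrow$ (1) for general $R$. So this is where the hypothesis must be spent, and the place where I would argue most carefully.
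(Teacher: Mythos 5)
Your proposal is correct and takes essentially the same route as the paper: (1) $\Rightarrow$ (2) via Corollary \ref{base-change} and the local constancy of the rank of the finite projective module $R[[t]]/(x)$, and (2) $\Rightarrow$ (1) by passing to $R/{\rm Nil}(R)$, observing that the reduced series is $t^d$ times a unit power series, and lifting invertibility in $R((t))$ along the nilpotent kernel $I((t))$ --- this last lifting step being precisely the content of Proposition \ref{strict-Weierstrass-Artin}, which the paper cites and you re-derive inline. Your explicit clopen/product decomposition of $\Spec(R)$ via quasi-compactness is simply a careful spelling-out of the paper's reduction ``we may assume $\Spec(R)$ is connected and $\ord_\fp(x)=d$''.
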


\begin{proof}
	Assume that $x$ has a strict Weierstrass factorization i.e. $x\in \cL^{\beta+} \bA^1(R)$, then by Cor. \ref{base-change}, $R[[t]]/(x)$ is a projective $R$-module of finite type compatible with base change. In particular, for every prime ideal $\fp$ we have 
	\begin{equation}\label{base-change-formula}
		k_\fp[[t]]/(x)= R[[t]]/(x) \otimes_R k_\fp
	\end{equation}
	where $\dim_{k_\fp}(k_\fp[[t]]/(x))=\ord_\fp(x)$. For $R[[t]]/(x)$ is a projective $R$-module of finite type, $\fp\mapsto \ord_\fp(x)$ is a locally constant function of $\fp$. 
	
	Now we assume that ${\rm Nil}(R)^n=0$ for some $n\in \N$. Let $x\in R[[t]]$ be a formal series with coefficients in $R$ such that the function $\fp \to \ord_\fp(x)$ is locally constant on $\Spec(R)$. We will prove that $x$ has a strict Weierstrass factorization. 
	We may assume that $\Spec(R)$ is connected and $\ord_\fp(x)=d$ for all $\fp\in\Spec(R)$. Let us consider the coefficients $x_0,x_1,\ldots$ of the formal series $x=x_0+x_1t+\cdots$. For $0\leq i\leq d-1$, we have $\nu_\fp(x_i)=0$ for all $\fp\in\Spec(R)$. It follows that $x_i$ belongs to ${\rm Nil}(R)$ the intersection of all prime ideals of $R$. We recall that ${\rm Nil}(R)$ can also be characterized as the ideal of all nilpotent elements of $R$. Let us denote $\bar R=R/{\rm Nil}(R)$, $\bar x_i$ the image of $x_i$ in $\bar R$, and $\bar x$ the image of $x$ in $\bar R[[t]]$. We have $\bar x_i=0$ for all $i$ in the range $0\leq i \leq d-1$. On the other hand, for every $\fp\in\Spec(R)$ we have $\nu_\fp(x_d) \neq 0$. Since $x_d$ doesn't lie in any maximal ideal of $R$, we have $x_d\in R^\times$. Now $\bar x=\bar x_d t^d + \bar x_{d+1} t^{d+1} +\cdots \in \bar R[[t]]$ with $\bar x_d\in \bar R^\times$ obviously has a strict Weierstrass factorization. Under the assumption ${\rm Nil}(R)^n=0$ for some $n\in N$, we deduce that $x$ has a strict Weierstrass factorization in virtue of Prop. \ref{strict-Weierstrass-Artin} that follows.\end{proof}

Prop, \ref{criterion} provides a handy way to check whether a formal series in $x\in R[[t]]$ has a strict Weierstrass factorization by calculating the function $\fp\mapsto \ord_\fp(x)$. For instant, for $R=k[a]$ the formal series $x=-a+t \in R[[t]]$ does not have strict Weierstrass factorization for $\ord_\fp(x)=0$ if $\fp\neq (a)$ and $\ord_\fp(x)=1$ if $\fp=(a)$.

We also observe that both statement and proof of the above proposistion can be simplified if $R$ is a $k$-algebra of finite type. For a $k$-algebras of finite type, we may replace in the statement of Prop, \ref{criterion} $\Spec(R)$ by the space $\Spm(R)$ of maximal ideals  For maximal ideals, the base change formula \eqref{base-change-formula} holds in virtue of Lemma \ref{base-change-lemma} so that we don't need to appeal to Prop. \ref{algebra}. This assures that (1) implies (2).  The converse statement holds because for a $k$-algebra of finite type $R$, the intersection of all maximal ideals, its Jacobson radical $\Jac(R)$, is equal to the intersection of all prime ideals, its nilradical. 

\begin{proposition}\label{strict-Weierstrass-Artin}
	The morphism $\cL^{\beta+}\bA^1 \to \cL^\bullet\bA^1$ is surjective and formally étale: for every $k$-algebra $R$, every nilpotent ideal $I$ of $R$ i.e. $I^n=0$ for some $n\in\NN$, and $\bar R=I/R$, if $x\in R[[t]]$ is a formal series whose reduction modulo $I$ satisfies $\bar x\in \bar R((t))^\times \cap \bar R[[t]]$ then $x\in R((t))^\times \cap R[[t]]$. 
	In particular, if $R$ is an Artinian local $k$-algebra then 
	$$R[[t]]\cap R((t))^\times= \cL^\bullet \bA^1(R).$$
	In other words, every non-degenerate formal series $x\in \cL^\bullet \bA^1(R)$ has a unique strict Weierstrass factorization under the assumption that $R$ is an Artinian local ring .
\end{proposition}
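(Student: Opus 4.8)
The plan is to recast everything through Proposition~\ref{algebra}, which identifies $\cL^{\beta+}\bA^1(R)=R[[t]]\cap R((t))^\times$ with the set of series admitting a strict Weierstrass factorization and asserts uniqueness of that factorization. Uniqueness already yields the unramified half of ``formally étale,'' so the entire content is the existence (lifting) statement: if $\bar x\in\bar R[[t]]$ has a strict Weierstrass factorization and $I=\ker(R\to\bar R)$ is nilpotent, then so does $x$. Surjectivity of $\cL^{\beta+}\bA^1\to\cL^\bullet\bA^1$ is the easy part, since over any field $K$ a non-degenerate series is $t^d$ times a unit and hence lies in $K((t))^\times\cap K[[t]]$, so the map is surjective on points. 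For the lifting statement I would first reduce to a square-zero ideal: writing $I^n=0$ and setting $J=I^{n-1}$, one has $J^2=0$ and $(I/J)^{n-1}=0$, so an induction on $n$ passing through $R/J$ reduces the general nilpotent case to $I^2=0$.

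In the square-zero case I would lift the factorization $\bar x=\bar u\,\bar q$ to a first approximation over $R$: choose a unit $v\in R[[t]]^\times$ reducing to $\bar u$ (any coefficient-wise lift works, since the constant term lifts a unit across a nilpotent surjection and is therefore a unit in $R$) and a monic polynomial $p\in R[t]$ of degree $d$ reducing to $\bar q$. Setting $e=x-vp\in I[[t]]$ and seeking the correction in the form $u=v(1+w)$, $q=p+r$ with $w\in I[[t]]$ and $r\in R[t]$ of degree $<d$, the identity $I^2=0$ kills the cross term $wr$ and collapses the equation $x=uq$ to the linear relation $wp+r=v^{-1}e$ in $I[[t]]$. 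This is exactly a Weierstrass division of the right-hand side by the monic polynomial $p$.

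The crux, and the step I expect to be the main obstacle, is justifying that division by $p$ is available over $R$, i.e. that $R[t]/(p)\to R[[t]]/(p)$ is an isomorphism, which by the argument of Proposition~\ref{algebra} holds precisely when $p$ divides a power of $t$. A monic lift of $\bar q$ need not visibly have this property, but it does follow by nilpotency propagation: on the free module $M=R[t]/(p)$ the multiplication-by-$t$ operator $T$ satisfies $\bar T^{m}=0$ (since $\bar q\mid t^m$), hence $T^m(M)\subseteq IM$, and iterating gives $T^{km}(M)\subseteq I^kM$; as $I^n=0$ this forces $T^{nm}=0$, i.e. $p\mid t^{nm}$ over $R$. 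Granting this, Weierstrass division solves $wp+r=v^{-1}e$ uniquely; reducing modulo $I$ and using that the kernel of $R[[t]]\to\bar R[[t]]$ is $I[[t]]$ shows the solution already lies in $I[[t]]$ and in the degree-$<d$ part of $I[t]$, as required. Then $u=v(1+w)$ is a unit (as $w$ is nilpotent), $q=p+r$ is monic of degree $d$ with the same reduction $\bar q$, hence again divides a power of $t$ by the same propagation argument, and $x=uq$ is the desired strict Weierstrass factorization.

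Finally, the ``in particular'' statement is immediate: an Artinian local $k$-algebra $R$ has nilpotent maximal ideal $\fm$, and every field quotient of $R$ factors through the residue field $K=R/\fm$, so a non-degenerate $x$ reduces to a nonzero $\bar x\in K[[t]]$, which over the field $K$ has the strict Weierstrass factorization $\bar x=\bar u\,t^d$. Applying the lifting statement with $I=\fm$ gives $x\in R((t))^\times\cap R[[t]]=\cL^{\beta+}\bA^1(R)$, whence $\cL^{\beta+}\bA^1(R)=\cL^\bullet\bA^1(R)$, with uniqueness of the factorization again furnished by Proposition~\ref{algebra}.
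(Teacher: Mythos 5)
Your proof is correct, but it takes a genuinely different route from the paper's. You linearize the lifting problem: dévissage to a square-zero ideal, an explicit ansatz $u=v(1+w)$, $q=p+r$, and then Weierstrass division of $v^{-1}(x-vp)$ by the monic lift $p$ — which requires your nilpotency-propagation lemma ($\bar q\mid t^m$ over $\bar R$ forces $T^{km}(M)\subseteq I^kM$ on $M=R[t]/(p)$, hence $p\mid t^{nm}$ over $R$) to make the division legitimate, followed by reduction mod $I$ to see that the correction terms lie in $I$. The paper instead never touches the factorization at all in the proof: it observes that the kernel of $R((t))\to\bar R((t))$ is the ideal $I((t))$ of Laurent series with coefficients in $I$, which satisfies $I((t))^n=0$, so $\Spec R((t))\to\Spec \bar R((t))$ is a bijection and invertibility in $R((t))$ can be tested after reduction; since the statement is phrased purely as ``$\bar x\in\bar R((t))^\times\cap\bar R[[t]]$ implies $x\in R((t))^\times\cap R[[t]]$,'' this one observation finishes the proof, with Proposition~\ref{algebra} serving only to translate invertibility back into the existence and uniqueness of a strict Weierstrass factorization. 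What each approach buys: the paper's argument is essentially one line and exploits the already-established equivalence of Proposition~\ref{algebra} to full effect; yours is longer but constructive — it produces the lifted factorization explicitly, isolates the useful auxiliary fact that any monic lift of a divisor of $t^m$ divides $t^{nm}$, and amounts to a direct verification of formal smoothness of the factorization data itself rather than a reduction to lifting of units.
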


\begin{proof}
Let $I((t))$ denote the ideal of $R((t))$ consisting of all Laurent formal series whose  coefficients belong to $I$. We have an exact sequence
$$ 0\to I((t)) \to R((t)) \to \bar R((t)) \to 0 .$$
Since $I^n=0$, we have $I((t))^n=0$. It follows that the homomorphism $R((t)) \to \bar R((t))$ induces a bijection between their spectra. For an element in a ring is invertible if it doesn't belong to any maximal ideal, we deduce that $x\in R((t))$ is invertible if and only its reduction $\bar x\in \bar R((t))$ is invertible.
\end{proof}

\begin{proposition} \label{ind-scheme}
The functor $R\mapsto \cL^{\beta+}\bA^1(R)$ is representable by a strict ind-scheme. The projection on the degree of $q$ defines a bijection from the set of connected components of $\cL^{\beta+}\bA^1$ on $\N$. For every $d\in \N$, the component $\cL^{\beta+,d}\bA^1$ is isomorphic to $\hat Q_{d} \times \cL \Gm$ where $\hat Q_d$ is the completion of the space $Q_d$ of all monic polynomials of degree $d$ at the point $q=t^d$.
\end{proposition}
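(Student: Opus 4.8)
The plan is to leverage the uniqueness of the strict Weierstrass factorization (Prop.~\ref{algebra}) to peel off a copy of $\cL\Gm$ and reduce the whole statement to understanding the space of admissible monic polynomials. For a connected test ring $R$ the integer $d=\deg q=\rk_R\bigl(R[[t]]/(x)\bigr)$ is well defined and locally constant, so that $\cL^{\beta+}\bA^1=\bigsqcup_{d\in\N}\cL^{\beta+,d}\bA^1$, where $\cL^{\beta+,d}\bA^1(R)$ collects those $x$ for which $R[[t]]/(x)$ is everywhere of rank $d$. Sending $x$ to its unique factorization $x=uq$ gives, by Prop.~\ref{algebra}, a bijection $\cL^{\beta+,d}\bA^1(R)\isom Q_d^\circ(R)\times R[[t]]^\times$ natural in $R$, where $Q_d^\circ(R)$ denotes the set of monic $q\in R[t]$ of degree $d$ dividing some power $t^n$. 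Since $R[[t]]^\times=\cL\Gm(R)$ is represented by the affine scheme $\Spec k[x_0^{\pm1},x_1,x_2,\ldots]$, everything comes down to identifying $Q_d^\circ$ with $\hat Q_d$.

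Next I would exhibit $Q_d^\circ$ as a strict ind-scheme inside $Q_d=\bA^d$. Over $A=k[q_0,\ldots,q_{d-1}]$ write the universal monic polynomial $q=t^d+q_{d-1}t^{d-1}+\cdots+q_0$. Weierstrass division by the monic $q$ shows that $q\mid t^n$ in $R[[t]]$ if and only if the remainder of $t^n$ modulo $q$ vanishes; writing $t^n\equiv\sum_{i=0}^{d-1}c_i^{(n)}(q)\,t^i \pmod q$ with $c_i^{(n)}\in A$, the locus $q\mid t^n$ is the closed subscheme $Q_d^{(n)}=\Spec(A/J_n)$ with $J_n=(c_0^{(n)},\ldots,c_{d-1}^{(n)})$. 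Because $t^n\mid t^{n+1}$ we have $Q_d^{(n)}\subset Q_d^{(n+1)}$, so $Q_d^\circ=\indlim_n Q_d^{(n)}$ is a strict ind-scheme with the correct functor of points.

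The heart of the argument is to prove that the chain $\{Q_d^{(n)}\}_n$ is mutually cofinal with the infinitesimal neighbourhoods $\{\Spec(A/\fm^m)\}_m$ of the origin $q=t^d$, where $\fm=(q_0,\ldots,q_{d-1})$, which is precisely the identification $Q_d^\circ\simeq\hat Q_d$. For one inclusion, over any field a monic degree-$d$ divisor of $t^n$ must equal $t^d$, so $Q_d^{(n)}$ is set-theoretically the single point $\fm$; as $A$ is Noetherian this gives $\fm^m\subset J_n$ for some $m$, i.e. $Q_d^{(n)}\subset\Spec(A/\fm^m)$. For the reverse I would introduce the grading in which $t$ has weight $1$ and $q_j$ has weight $d-j$, making the relation $t^d=-\sum_{j<d}q_jt^j$ homogeneous of weight $d$; then each $c_i^{(n)}$ is homogeneous of weight $n-i$ in the $q_j$, and since every weight is at most $d$ a monomial of that weight has at least $\lceil(n-i)/d\rceil$ factors, whence $J_n\subset\fm^{\lceil(n-d+1)/d\rceil}$. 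Letting $n\to\infty$ gives $J_n\subset\fm^m$ for $n$ large, completing the cofinality and so $Q_d^\circ\simeq\hat Q_d$.

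Assembling, each component is $\hat Q_d\times\cL\Gm$, a product of a strict ind-scheme with an affine scheme, hence itself a strict ind-scheme, and the disjoint union over $d\in\N$ represents $\cL^{\beta+}\bA^1$. Since $\hat Q_d$ is a one-point formal scheme and $\cL\Gm$ is the spectrum of a domain, each piece is connected, so the assignment of the degree $d$ is the asserted bijection $\pi_0\isom\N$. I expect the cofinality in the third step—specifically the reverse inclusion—to be the main obstacle, since it requires a quantitative control on the order of vanishing at the origin of the remainder coefficients $c_i^{(n)}$; the weight-homogeneity of Weierstrass division is exactly what turns this into an explicit bound, and it is the only point where more than formal bookkeeping is involved.
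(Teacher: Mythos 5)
Your proposal is correct in substance and follows the same skeleton as the paper's proof: peel off $\cL\Gm$ using the uniqueness of the strict factorization, then identify the ind-scheme of admissible monic polynomials with $\hat Q_d$. The differences are in how the two non-formal steps are executed. For representability of the divisibility locus, you cut out $Q_d^{(n)}$ by the vanishing of the Euclidean remainder coefficients $c_i^{(n)}$, whereas the paper realizes the same subfunctor as the scheme of pairs $(q,q')\in Q_d\times Q_{n-d}$ with $qq'=t^n$, using uniqueness of the cofactor $q'$; these are equivalent descriptions. The genuine divergence is in the identification $Q_d^\circ\simeq\hat Q_d$: the paper obtains the nontrivial inclusion $\Spec(A/\fm^m)\subseteq Q_d^{(n)}$ abstractly, by applying Prop.~\ref{strict-Weierstrass-Artin} (via Prop.~\ref{algebra}) to the tautological polynomial over the Artinian local ring $A/\fm^m$, while you prove it by the quantitative bound $J_n\subseteq\fm^{\lceil (n-d+1)/d\rceil}$ coming from weight homogeneity. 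That homogeneity claim is correct: giving $q_j$ weight $d-j$ and $t$ weight $1$, uniqueness of Euclidean division shows $c_i^{(n)}$ is homogeneous of weight $n-i$, and since all weights are at most $d$ the stated bound follows. Your route is self-contained and explicit (it says exactly which infinitesimal neighbourhood sits inside which $Q_d^{(n)}$), at the cost of the grading computation; the paper's route is softer and reuses the formal-\'etaleness statement already established.

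One point you must repair: divisibility has to mean divisibility in $R[t]$, equivalently the existence of a monic cofactor $q'\in Q_{n-d}(R)$ with $qq'=t^n$, not divisibility in $R[[t]]$ as you wrote. Over any field, $t+1$ divides $t^n$ in $k[[t]]$ (it is a unit there), yet its Euclidean remainder is $(-1)^n\neq 0$; so under the $R[[t]]$ reading your criterion ``$q\mid t^n$ iff the remainder vanishes'' is false, and so is the claim that the only monic degree-$d$ divisor of $t^n$ over a field is $t^d$, on which your Noetherian cofinality direction rests. Admittedly the paper's definition of strictness literally says $t^n\in(q)$ as an ideal of $R[[t]]$, and its proof of Prop.~\ref{ind-scheme} silently switches to the monic-cofactor formulation (which is also what makes the uniqueness assertion of Prop.~\ref{algebra} true); you inherit that imprecision, but in your write-up it does real work through the remainder criterion, so it needs to be stated in the polynomial form. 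With that reading fixed, every step of your argument goes through.
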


\begin{proof}
The uniqueness of the strict Weierstrass factorization implies that
	$$\cL^{\beta+,d}\bA^1(R) = Q^0_{d}(R) \times R[[t]]^\times$$
where $Q_d^0(R)$ is the set of monic polynomial of degree $d$ dividing $t^n$ for some $n\in N$. 

For each $n\in\N$, we denote $Q_d^{0,n}(R)$ the set of monic polynomials of degree $d$ dividing $t^n$. By definition $Q_d^{0,n}(R)$ is the subset of $Q_d(R)$ consisting of monic polynomials of degree $d$ such that there exists $q' \in Q_{n-d}(R)$ satisfying $qq'=t^{n}$. This is equivalent to say that $(t^{-d} q)(t^{-n+d} q')=1$ i.e. the elements $t^{-d} q, t^{-n+d} q'$ are inverse of each other in  $R[t^{-1}]^\times$. It follows that if $q'$ exists, it is unique. It follows that there is a natural bijection between $Q_d^{0,n}(R)$ and the subset of $Q_d(R)\times Q_{n-d}(R)$ of consisting of pairs $(q,q')$ such that $qq'=t^n$. It follows that the functor $R\mapsto Q_d^{0,n}(R)$ is representable by a $k$-scheme of finite type. Since $Q_d^{0,n}$ has a unique geometric point $(q,q')=(t^d,t^{n-d})$, it is a local Artinian scheme. 

We have a map $Q_d^{0,n}\to Q_d^{0,n+1}$ given by $(q,q')\mapsto (q,tq')$. Since the morphism $Q_{n-d}\to Q_{n-d+1}$ given by $q'\mapsto tq'$ is a closed immersion, the morphism $Q_d^{0,n}\to Q_d^{0,n+1}$ which derives from the latter by base change is also a closed immersion. 

We have thus proved that $Q^0_d$ is a strict colimit of Artinian schemes based at the point $t^d$ of $Q_d$. It follows that there is a canonical morphism of formal schemes $Q^0_d \to \hat Q_d$. It follows from Prop. \ref{strict-Weierstrass-Artin} that the morphism $Q^0_d \to \hat Q_d$ is an isomorphism.
\end{proof}

To summarize, the property of having a strict Weierstrass factorization is controlled by the morphism
\begin{equation} \label{gamma}
	\gamma:\cL^{\beta+} \bA^1 \to \cL^\bullet \bA^1
\end{equation} 
which induces isomorphism on $k$-points. We have seen that {\it $\gamma$ is formally étale}. On the negative side {\it $\gamma$ is not schematic} because otherwise $\cL^{\beta+} \bA^1$ would be schematic and it isn't according to its explicit description in Prop. \ref{ind-scheme}.

We will now study the geometry related to the notion of non-strict Weierstrass factorization. The natural action of $\Gm$ on $\bA^1$ induces on the level of arc spaces an action of $\cL \Gm$ on $\cL\bA^1$.  For every $d\in\N$ we have a morphism $\cL \bA^1 \to \cL_d \bA^1$ where $\cL_d\bA^1$ is the affine space $\Spec( k[x_0,x_1,\ldots,x_d])$. We will denote $\cL^{\leq d} \bA^1$ to be the open subscheme of $\cL\bA^1$ defined as the preimage of the complement of the point $0$ in $\bA^d$. The set of $k$-points of $\cL^{\leq d} \bA^1$ consists in the series $x\in k[[t]]$ such that $\val(x) \leq d$. We have the inclusions
$$\cL^{\leq 0}\bA^1 \subset \cL^{\leq 1}\bA^1\subset \cdots.$$
The union of those open subschemes is the space $\cL^\bullet \bA^1$ of non-degenerate arcs.

 We will consider the affine subspace $Q_d$ of $\cL\bA^1$ defined by the equations $x_d=1$ and $x_{d+1}=x_{d+2}=\cdots=0$. The action of $\cL\Gm$ on $\cL \bA^1$ induces a morphism
\begin{equation}\label{beta-d}
	\beta_d:Q_d \times \cL \Gm \to \cL^{\leq d} \bA^1
\end{equation}
given by $\beta(q,u)=uq$. By definition a formal series $x\in \cL \bA^1(R)$ has   Weierstrass factorization if it lies in the image $\beta_D(Q_d(R)\times \cL\Gm(R))$ for some $d\in\N$.

We will see that as opposed to the morphism $\gamma$ of \eqref{gamma}, the Weierstrass morphism $\beta_d$ of \eqref{beta-d} is not formally étale but only formally smooth in a weak sense. On the other hand, it has the advantage of being schematic. 

\begin{proposition}\label{beta-formal-smooth}
To simplify notation, we will write $X=Q_d\times \cL\Gm$ and $Y=\cL^{\leq d}\bA^1$.
	The morphism $\beta_d:X\to Y$ is surjective on $k$-points. Moreover, for every $x\in X(k)$  mapping on $y\in Y(k)$, the induced mapping on formal completions $\hat X_x\to \hat Y_y$ is formally smooth.	
\end{proposition}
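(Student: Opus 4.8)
The plan is to prove the two assertions separately: surjectivity on $k$-points by an explicit factorization, and formal smoothness by the infinitesimal lifting criterion, which reduces to a Weierstrass division with coefficients in the kernel ideal. For the first, I would start from $y\in Y(k)=\cL^{\le d}\bA^1(k)$, so $y\in k[[t]]$ with $e:=\val(y)\le d$. Writing $y=t^e v$ with $v\in k[[t]]^\times$, I set $q=t^e(1+t^{d-e})$, a monic polynomial of degree $d$ whose factor $1+t^{d-e}$ is a unit in $k[[t]]$, and $u=v(1+t^{d-e})^{-1}\in k[[t]]^\times$ (with $q=t^d$, $u=v$ in the case $e=d$); then $\beta_d(q,u)=uq=y$, so $y$ lies in the image.

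For formal smoothness, recall that a morphism of formal completions at $k$-points is formally smooth precisely when it has the infinitesimal lifting property against square-zero extensions $R'\twoheadrightarrow R$ of local Artinian $k$-algebras, the general nilpotent case following by filtering the kernel by its powers. So I would fix such an extension with square-zero kernel $I$, a point $(q_R,u_R)\in\hat X_x(R)$ lifting $x$, and a point $y'\in\hat Y_y(R')$ lifting $y$ with $y'\bmod I=u_Rq_R$, and must produce a lift $(q',u')\in\hat X_x(R')$ of $(q_R,u_R)$ with $u'q'=y'$. Choosing arbitrary lifts $\tilde q$ (monic of degree $d$) and $\tilde u\in R'[[t]]^\times$ (a unit since $I$ is nilpotent), and writing $q'=\tilde q+\delta q$, $u'=\tilde u+\delta u$ with $\delta q\in I_{<d}[t]$ (coefficients in $I$, degree $<d$) and $\delta u\in I[[t]]$, the equation $u'q'=y'$ becomes, modulo $I^2=0$, the linear equation
$$u_R\,\delta q+q_R\,\delta u=w,\qquad w:=y'-\tilde u\tilde q\in I[[t]],$$
to be solved for $(\delta q,\delta u)\in I_{<d}[t]\oplus I[[t]]$.

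The heart of the argument, and the step I expect to be the main obstacle, is the surjectivity of this linearized multiplication, which is a Weierstrass division statement with coefficients in the module $I$. Here I would use that $q_R$ is monic, hence non-degenerate, so by Prop.~\ref{strict-Weierstrass-Artin} it admits over the Artinian local ring $R$ a strict Weierstrass factorization $q_R=WQ$ with $W\in R[[t]]^\times$ and $Q$ monic of degree $e=\val(q)\le d$ dividing some $t^m$. Since $W$ is a unit, the problem reduces to the division identity $I[[t]]=Q\,I[[t]]+I_{<e}[t]$. The point requiring care is that one cannot simply invoke $R[[t]]\otimes_R I\cong I[[t]]$, since tensor product need not commute with completion (cf. Section~2); instead I would argue the division directly in two finite steps: the relation $t^m=Qc$ reduces any $w\in I[[t]]$ modulo $Q\,I[[t]]$ to a genuine truncation in $I_{<m}[t]$, and then exact polynomial division by the monic $Q$, using the isomorphism $R[t]/(Q)\cong R[[t]]/(Q)$ of rank $e$ supplied by Prop.~\ref{algebra}, further lowers the degree below $e$. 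Granting $I[[t]]=Q\,I[[t]]+I_{<e}[t]$, I would divide $u_R^{-1}w=Qg+r$ with $r\in I_{<e}[t]\subseteq I_{<d}[t]$ and $g\in I[[t]]$, and set $\delta q=r$, $\delta u=W^{-1}u_R g$; then $u_R\delta q+q_R\delta u=u_R r+Qu_R g=u_R(r+Qg)=w$. This produces the required lift $(q',u')$ and establishes formal smoothness.
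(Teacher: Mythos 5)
Your proof is correct, but it takes a genuinely different route from the paper's. The paper's own argument is multiplicative and avoids linearization entirely: working with an Artinian local ring $R$, ideal $I$, $\bar R=R/I$, a point $y\in Y(R)$ and a factorization $\bar y=\bar u\bar q$ over $\bar R$, it applies Prop.~\ref{strict-Weierstrass-Artin} to $y$ itself to get the unique strict factorization $y=u_0q_0$ with $q_0$ monic of degree $e$ dividing $t^n$, then compares the free $\bar R$-modules $\bar R[t]/(\bar q)$ and $\bar R[t]/(\bar q_0)$ (via their characteristic polynomials) to conclude $\bar q=\bar q_0\bar p$ with $\bar p$ monic of degree $d-e$ and invertible in $\bar R[[t]]$; since $Q_{d-e}$ is an affine space, $\bar p$ lifts arbitrarily to a monic $p\in Q_{d-e}(R)$, automatically a unit because $I$ is nilpotent, and the lift is written down in one stroke as $(q_0p,\,p^{-1}u_0)$ — no square-zero dévissage needed. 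You instead reduce to square-zero extensions and solve the linearized equation $u_R\,\delta q+q_R\,\delta u=w$ by a Weierstrass division with coefficients in the module $I$, obtained by applying Prop.~\ref{strict-Weierstrass-Artin} to $q_R$ and dividing in two elementary steps (truncation via $t^m=Qc$, then long division by the monic $Q$). This is essentially the tangent-space alternative that the paper itself points to right after Prop.~\ref{beta-prop} and carries out at finite level in the proof of Prop.~\ref{alpha-smoothness}: your identity $I[[t]]=Q\,I[[t]]+I_{<e}[t]$ is the module-coefficient version of the surjectivity proved there. The paper's route buys brevity and a structural explanation (lifting a factorization amounts to lifting the unit cofactor $\bar p$, which is unobstructed); your route buys an explicit identification of the obstruction as a division problem, a correct sidestep of the tensor-versus-completion trap of Section~2 (which you rightly flag), and a natural bridge to the morphism $\alpha_d$ of the following section. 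Your explicit surjectivity construction $q=t^e(1+t^{d-e})$, $u=v(1+t^{d-e})^{-1}$ is also fine, where the paper merely declares surjectivity obvious.
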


\begin{proof}
The surjectivity on $k$-points is obvious. To prove that the induced mapping on formal completions $\hat X_x\to \hat Y_y$ is formally smooth is equivalent to prove
	the usual lifting properties for Artinian local algebras: for $R$ an Artinian algebra, $I$ a ideal of $R$, $\bar R=R/I$, for every $\bar x\in X(\bar R)$ mapping to $\bar y\in Y(\bar R)$, if $y\in Y(R)$ reducing to $\bar y$ modulo $I$ then there exists $x\in X(R)$ mapping to $y\in Y(R)$ and reducing to $\bar x\in X(\bar R)$ mod $I$. 

	Let $y\in Y(R)$ be a non degenerate formal series with coefficients in an Artinian local ring $R$. By Prop. \ref{strict-Weierstrass-Artin}, there exists a unique strict Weierstrass factorization $y=q_0u_0$ where $u_0\in R[[t]]^\times$ and $q_0$ is a monic polynomial of some degree $e$, $q_0$ dividing $t^n$ for some $n\in\NN$.
	
	The point $\bar x\in X(\bar R)$ mapping to $\bar y\in Y(\bar R)$ corresponds to a Weierstrass factorization $\bar y=\bar q \bar u$ of $\bar y$, its reduction  modulo $I$ where $\bar q$ is a monic polynomial of degree $d$ with coefficients in $\bar R$ and $\bar u\in \bar R[[t]]^\times$. 
	By reducing $y=u_0q_0$ modulo $I$ we have the unique strict Weierstrass factorization $\bar y=\bar u_0 \bar q_0$ of $\bar y$ that may be different from the Weierstrass factorization $\bar y=\bar u \bar q$ given by $\bar x\in X(\bar R)$.
	
	Since $t^n\in (\bar q_0)=(\bar x)=(\bar q)$, we have 
	$$\bar R[[t]]/\bar q=\bar R[t]/(t^n,\bar q)$$
i.e. $R[[t]]/\bar q$ is a quotient of $\bar R[t]/\bar q$. On the other hand, the strictness property shows that $\bar R[[t]]/(\bar q_0)=\bar R[t]/(\bar q_0)$ as in the proof of Prop. \ref{algebra}. It follows there exists a surjective $R$-linear homomorphism of free $R$-modules $\bar R[t]/\bar q \to \bar R[t]/\bar q_0$ compatible with action of $t$. In particular we have the inequality $d\geq e$ by comparing their ranks. Moreover by taking characteristic polynomials of $t$ acting on those free modules, we see that $\bar q_0$ divides $\bar q$ i.e. there exists a monic $\bar p$ of degree $d-e$ such that $\bar q=\bar q_0 \bar p$. We have $\bar p\in Q_d(\bar R)\cap \bar R[[t]]^\times$.

Now there are no problem to lift the polynomial $\bar p\in Q_{d-e}(\bar R)$ to a monic polynomial $p\in Q_{d-e}(R)$ as $Q_{d-e}$ is an affine space. Choose an arbitrary lift $p\in Q_{d-e}(R)$ of $\bar p\in Q_{d-e}(\bar R)$. Since the reductio mod $I$ of $p$ is an invertible element $\bar p\in \bar R[[t]]^\times$, we have $p\in R[[t]]^\times$. Now we define the lift $y\in Y(R)$ of $\bar y=(\bar q,\bar u)$ to be $y=(q_0 p, p^{-1} u_0)$.
\end{proof}

\begin{proposition} \label{beta-prop} Let $\cL_n\Gm$ and $\cL_n\bA^1$ denote the spaces of $n$-jets on $\Gm$ and $\bA^1$ for every $n\in\NN$.
If $d\leq n\in\NN$, then the morphism
\begin{equation} \label{beta-dn}
	\beta_{d,n}:Q_d \times \cL_n \Gm \to  \cL_n^{\leq d} \bA^1
\end{equation}
given by $(q,u) \mapsto qu$ is smooth. 
\end{proposition}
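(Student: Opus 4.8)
The plan is to verify smoothness through the Jacobian criterion, reducing it to a short surjectivity check on differentials. First I would record that both source and target are smooth of finite type over $k$: the factor $Q_d$ is the affine space $\bA^d$ with coordinates $q_0,\dots,q_{d-1}$ (the polynomial being $q=t^d+q_{d-1}t^{d-1}+\cdots+q_0$), while $\cL_n\Gm=\Spec k[u_0,u_0^{-1},u_1,\dots,u_n]$ is smooth of dimension $n+1$, so $X=Q_d\times\cL_n\Gm$ is smooth of dimension $d+n+1$; and $Y=\cL_n^{\leq d}\bA^1$ is an open subscheme of $\cL_n\bA^1=\bA^{n+1}$, hence smooth of dimension $n+1$. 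Before anything else I would check that $\beta_{d,n}$ genuinely lands in $Y$: writing $x_k=\sum_{i=0}^{\min(k,d)}q_iu_{k-i}$ (with $q_d=1$), for every prime $\fp$ of the coordinate ring of $X$ one has $\nu_\fp(x)=\nu_\fp(q)\nu_\fp(u)$ with $\nu_\fp(u)$ a unit and $\nu_\fp(q)$ monic of degree $d$, so $\val(\nu_\fp(x))\leq d$; hence $(x_0,\dots,x_d)$ has empty vanishing locus and $\beta_{d,n}$ factors through the open locus $Y$. This is also the point at which $d\leq n$ enters: for $q$ to be a genuine $n$-jet and for the open condition involving $x_0,\dots,x_d$ to make sense we need $d\leq n$.

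Next, since $\beta:=\beta_{d,n}$ is a finite-type morphism between smooth $k$-schemes, smoothness is equivalent to the cotangent map $\beta^*\Omega_{Y/k}\to\Omega_{X/k}$ being a locally split injection of vector bundles, a condition that by Nakayama can be tested fibrewise; so it suffices that for every geometric point $\xi=(q,u)\in X(K)$ ($K$ an algebraically closed field over $k$) the tangent map $d\beta_\xi:T_\xi X\to T_{\beta(\xi)}Y$ is surjective. Identifying $T_{\beta(\xi)}Y$ with $K[t]/t^{n+1}$, the differential of $x=qu$ is $dx=u\,dq+q\,du$ where $dq=\sum_{i=0}^{d-1}t^i\,dq_i$ and $du=\sum_{j=0}^{n}t^j\,du_j$. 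Hence the image of $d\beta_\xi$ is the $K$-span
$$ \im(d\beta_\xi)=\Big(q\cdot K[t]/t^{n+1}\Big)+\sum_{i=0}^{d-1}K\cdot\big(t^iu \bmod t^{n+1}\big). $$

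The heart of the argument is then a short valuation count. Set $e=\val(q)$, the order of vanishing of $q$ at $\xi$; since $q$ is monic of degree $d$ we have $e\leq d$, and $x=qu$ has $\val(x)=e$ because $u$ is a unit. On one hand $q\cdot K[t]/t^{n+1}=t^e\cdot K[t]/t^{n+1}$ consists of all jets of valuation $\geq e$, of dimension $n+1-e$. On the other hand, for $0\leq i\leq e-1$ (all of which occur among $0,\dots,d-1$ because $e\leq d$) the jets $t^iu$ have valuation exactly $i$ with invertible leading coefficient $u_0$, so modulo $t^e K[t]/t^{n+1}$ they are linearly independent and cover the valuations $0,\dots,e-1$. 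The two pieces together therefore span all of $K[t]/t^{n+1}$, $d\beta_\xi$ is surjective, and $\beta_{d,n}$ is smooth.

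I expect the only genuine subtlety — rather than an obstacle — to be the bookkeeping in the reduction step: justifying that fibrewise injectivity of the cotangent map over all geometric points yields a locally split injection (hence smoothness), and checking this survives residue-field extensions. This is standard, and I would phrase it via the fundamental exact sequence $\beta^*\Omega_{Y/k}\to\Omega_{X/k}\to\Omega_{X/Y}\to0$, smoothness being equivalent to $\Omega_{X/Y}$ being locally free of rank $d=\dim X-\dim Y$. As a sanity check on the relative dimension, one can describe the fibres directly: over a point $x$ with $\val(x)=e$ the factorizations $x=qu$ are parameterized by the $d-e$ free coefficients of a monic $q$ of valuation $e$ together with the $e$ top coefficients $u_{n-e+1},\dots,u_n$ of $u$ (the remaining coefficients being forced by $u\equiv x/q \bmod t^{n+1-e}$), giving a smooth fibre of dimension $d$ in agreement with the differential computation. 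An alternative argument along the lines of Proposition \ref{beta-formal-smooth}, lifting factorizations over square-zero extensions, is possible but is complicated here by the need to divide by a non-strict $q$ modulo $t^{n+1}$, so the Jacobian route seems cleaner.
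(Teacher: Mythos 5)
Your proof is correct, but it takes a genuinely different route from the paper's. The paper deduces the smoothness of $\beta_{d,n}$ from the infinite-level statement: it places $\beta_{d,n}$ in a commutative square under $\beta_d\colon Q_d\times\cL\Gm\to\cL^{\leq d}\bA^1$, with vertical projections $\mu_n,\nu_n$ that are limits of smooth surjective maps, and uses the formal smoothness of $\hat\beta_d$ on completions of $k$-points (Prop.~\ref{beta-formal-smooth}, which itself rests on the unique strict Weierstrass factorization over Artinian local rings, Prop.~\ref{strict-Weierstrass-Artin}) to conclude that $\hat\beta_{d,n}$ is formally smooth at every $k$-point; finite-typeness of the jet spaces then converts this into smoothness. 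You instead work entirely at the finite level via the Jacobian criterion: the differential $d\beta(a,b)=ua+qb$ together with the valuation count --- $q\cdot K[t]/t^{n+1}=t^{e}\cdot K[t]/t^{n+1}$ with $e=\mathrm{val}(q)\leq d$, and the jets $t^{i}u$, $0\leq i<e$, filling the quotient $K[t]/t^{e}$ --- gives surjectivity of the tangent map at every geometric point. Your route is more elementary and self-contained: it bypasses the Artinian factorization machinery and the descent of formal smoothness through the pro-smooth projections, and it exhibits the relative dimension $d$ explicitly; the paper's route, by contrast, recycles deformation-theoretic input already established at the arc level and avoids coordinates. Notably, the paper itself remarks immediately after its proof that a direct tangent-space computation is possible, and carries out exactly such an argument for the division map $\alpha_{d,n}$ in Prop.~\ref{alpha-smoothness}; your argument is essentially that alternative, with the correct extra twist by the unit $u$ that the multiplicative (rather than additive) structure of $\beta_{d,n}$ requires.
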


\begin{proof} If we denote $X=Q_d \times \cL \Gm$, $X_n=Q_d \times \cL_n \Gm$, $Y=\cL_n^{\leq d} \bA^1$ and $Y_n=\cL_n^{\leq d} \bA^1$ then we have a commutative diagram
\begin{equation}
	\begin{tikzcd}
X \arrow{r}{\beta_d} \arrow{d}[swap]{\mu_n}
& Y \arrow{d}{\nu_n} \\
X_n \arrow{r}[swap]{\beta_{d,n}}
& Y_n
\end{tikzcd}
\end{equation}
where $\mu_n$ and $\nu_n$ are projective limits of systems of smooth surjective maps with smooth surjective transition morphisms. For every $x_n\in X_n(k)$ mapping to $y_n\in Y_n(k)$, there exists $x\in X(k)$ over $x_n$ mapping to $y\in Y(k)$ over $y_n$. We have a commutative diagram of formal completions
\begin{equation}
	\begin{tikzcd}
\hat X_x \arrow{r}{\hat\beta_d} \arrow{d}[swap]{\hat\mu_n}
& \hat Y_y \arrow{d}{\hat\nu_n} \\
\hat X_{n,x_n} \arrow{r}[swap]{\hat\beta_{d,n}}
& \hat Y_{n,y_n}
\end{tikzcd}
\end{equation}
where $\hat\beta_d,\hat\mu_n,\hat\nu_n$ are known to be formally smooth. It follows that $\hat\beta_{d,n}$ is also formally smooth. Since $X_n$ and $Y_n$ are $k$-schemes of finite type, we can derive from the smoothness of $\beta_{d,n}$ from the formal smoothness of induced morphism on formal completions of points.
\end{proof}

Here we prove that a certain morphism between finite order jet spaces, which are of finite type, is smooth by proving that the morphism on the level of arc spaces is formally smooth in a weak sense. It is also possible to prove directly the smoothness of the morphism between jet spaces by calculation on their tangent spaces as we will see later in the study of the Weierstrass division theorem, see the proof of Prop. \ref{alpha-smoothness}.

Let us draw two consequences of the smoothness of $\beta_{d,n}$. 

\begin{corollary}
	Let $R$ be a local Henselian ring of maximal ideal $\fm$ and residue field $k$. Let $x\in R[[t]]$ of nonzero reduction $\bar x\ in\ k[[t]]$ mod $\fm$. Let $\val(\bar x)=d$. Then for every $n > d$ there exists a monic polynomial $q\in Q_d(R)$ with coefficients in $R$ and $u\in R[[t]]^\times$ such that 
	$$qu \equiv x \mod t^n.$$
\end{corollary}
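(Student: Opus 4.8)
The plan is to read the desired congruence as a factorization of an $(n-1)$-jet and to obtain it by lifting, along the smooth morphism of Proposition \ref{beta-prop}, a factorization that is already visible over the residue field. Set $m=n-1$; since $n>d$ we have $d\leq m$, so Proposition \ref{beta-prop} applies and $\beta_{d,m}\colon X_m\to Y_m$ is smooth, where $X_m=Q_d\times\cL_m\Gm$ and $Y_m=\cL_m^{\leq d}\bA^1$ are $k$-schemes of finite type. Writing $x_m$ for the image of $x$ in $\cL_m\bA^1(R)=R[t]/t^n$, the sought congruence $qu\equiv x\bmod t^n$ is exactly the statement that a pair $(q,u)\in X_m(R)$ maps to $x_m$ under $\beta_{d,m}$. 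First I would check that $x_m$ actually lies in the open subscheme $Y_m$: since $\val(\bar x)=d$, the coefficient $x_d$ reduces to $\bar x_d\neq 0$ in $k$, so $x_d\notin\fm$ and hence $x_d\in R^\times$ as $R$ is local, whence $(x_0,\ldots,x_d)$ is the unit ideal and $x_m\in Y_m(R)$.

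Over the residue field the factorization is elementary: $\bar x=t^d\bar u$ with $\bar u=\bar x_d+\bar x_{d+1}t+\cdots\in k[[t]]^\times$, which yields a point $\bar\xi=(t^d,\bar u)\in X_m(k)$ with $\beta_{d,m}(\bar\xi)=\bar x_m$, the reduction of $x_m$ modulo $\fm$. This places me in the standard situation of lifting a point of the special fibre along a smooth morphism over a Henselian base, and this is the crux of the argument. The base change $Z=X_m\times_{Y_m}\Spec(R)$ along $x_m$ is smooth over $R$, and its special fibre carries the $k$-point $\bar\xi$; because $R$ is Henselian local, a smooth $R$-scheme admits an $R$-point reducing to any prescribed $k$-point of its special fibre. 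This produces $\xi=(q,u)\in X_m(R)$ with $\beta_{d,m}(\xi)=x_m$ and $\xi\equiv\bar\xi\bmod\fm$. The Henselian hypothesis is used only here; the rest is formal.

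It remains to promote the jet $u\in\cL_m\Gm(R)$ to a genuine unit of $R[[t]]$. The first component $q$ is by construction a monic polynomial in $Q_d(R)$. Choosing any $\tilde u\in R[[t]]$ whose truncation modulo $t^n$ is $u$, say its polynomial representative, its constant term reduces to $\bar x_d\in R^\times$, so $\tilde u\in R[[t]]^\times$. Since reduction modulo $t^n$ is a ring homomorphism, $q\tilde u\equiv qu=x_m\equiv x\bmod t^n$, which is the required factorization.
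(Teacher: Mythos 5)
Your proof is correct and takes essentially the same approach as the paper: the paper's entire proof is the single sentence ``This is the Henselian lifting property for smooth morphisms,'' applied to the smooth morphism $\beta_{d,n}$ of Proposition \ref{beta-prop}. You have merely spelled out the details this invocation suppresses --- the indexing $m=n-1$, the check that $x_m$ lands in $\cL_m^{\leq d}\bA^1$, the special-fibre point $(t^d,\bar u)$, the lifting over the Henselian base, and the promotion of the truncated unit to an element of $R[[t]]^\times$ --- and each of these steps is handled correctly.
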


\begin{proof}
	This is the Henselian lifting property for smooth morphisms.  
\end{proof}

\begin{corollary}
	The morphism $\beta_d:Q_d\times \cL\Gm \to \cL^{\leq d}\bA^1$ is a projective limit of a system of smooth morphisms with possibly non-smooth transition morphisms.
\end{corollary}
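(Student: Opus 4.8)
The plan is to realize $\beta_d$ literally as the projective limit of the finite-level maps $\beta_{d,n}$ of Proposition~\ref{beta-prop}, each of which is smooth once $n\geq d$, so that the corollary becomes a formal consequence together with the identification of source and target as inverse limits of jet spaces.

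First I would express both the source and the target as inverse limits over $n\geq d$. Since $\Gm$ is smooth, the truncation maps $\cL_{n+1}\Gm\to\cL_n\Gm$ are smooth and surjective and realize $\cL\Gm=\varprojlim_n \cL_n\Gm$; keeping the constant factor $Q_d$ gives $Q_d\times\cL\Gm=\varprojlim_{n\geq d}(Q_d\times\cL_n\Gm)$. For the target I would check that $\cL^{\leq d}\bA^1=\varprojlim_{n\geq d}\cL_n^{\leq d}\bA^1$: by definition $\cL^{\leq d}\bA^1$ is the preimage in $\cL\bA^1=\varprojlim_n\cL_n\bA^1$ of the complement of the origin under $\cL\bA^1\to\cL_d\bA^1$, and because this open condition is already detected at level $d$ it is stable under every truncation $\cL_{n+1}\bA^1\to\cL_n\bA^1$; hence forming this open subscheme commutes with $\varprojlim_n$.

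Next I would verify that the $\beta_{d,n}$ form a morphism of inverse systems. For $n'\geq n\geq d$ the square relating $\beta_{d,n'}$ to $\beta_{d,n}$ through the truncation maps commutes, because the coefficient of $t^m$ in a product $qu$ with $m\leq n$ involves only $q$ (which, being monic of degree $d\leq n$, is unchanged by truncation) and the coefficients $u_0,\dots,u_m$ of $u$; thus truncating $qu$ modulo $t^{n+1}$ factors through $Q_d\times\cL_n\Gm$. Passing to the limit, $\varprojlim_{n\geq d}\beta_{d,n}$ is a morphism between the two limits computed above, and by construction, since $\beta_d(q,u)=uq$, it is exactly $\beta_d$. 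As each $\beta_{d,n}$ with $n\geq d$ is smooth by Proposition~\ref{beta-prop}, this exhibits $\beta_d$ as a projective limit of smooth morphisms.

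The transition morphisms in this system are the truncations $Q_d\times\cL_{n+1}\Gm\to Q_d\times\cL_n\Gm$ and $\cL_{n+1}^{\leq d}\bA^1\to\cL_n^{\leq d}\bA^1$; the assertion only requires them to organize the $\beta_{d,n}$ into an inverse system, so no hypothesis on them is needed (in fact, $\Gm$ and $\bA^1$ being smooth, they happen to be smooth here as well, which is why the statement phrases this as a mere possibility). The one point deserving genuine care is the identification of the target as an honest inverse limit, i.e.\ that passing to the open subscheme $\cL^{\leq d}$ commutes with $\varprojlim_n$; this is immediate because the defining open locus already lives at the finite level $d$, and everything else is formal.
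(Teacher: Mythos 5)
Your finite-level ingredients are all correct --- the smoothness of $\beta_{d,n}$ for $n\geq d$ (Prop.~\ref{beta-prop}), the commutativity of the truncation squares, and the identifications $Q_d\times\cL\Gm=\varprojlim_n(Q_d\times\cL_n\Gm)$ and $\cL^{\leq d}\bA^1=\varprojlim_{n\geq d}\cL_n^{\leq d}\bA^1$ --- but the projective system you build is not the one the corollary is about. In your system the target varies with $n$ and the transition data are jet truncations, which are smooth. In the paper this corollary is precisely the verification, for $\beta_d$, of the first clause of Definition~\ref{Weierstrass-type}, and there ``projective limit of a system of smooth morphisms'' means a projective system of schemes smooth over the \emph{fixed} target $Y=\cL^{\leq d}\bA^1$, with transition maps that are $Y$-morphisms. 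To obtain such a system from yours one must base change each $\beta_{d,n}$ along the projection $\cL^{\leq d}\bA^1\to\cL_n^{\leq d}\bA^1$, getting smooth morphisms
$$\beta_{d,n}^\infty\colon (Q_d\times\cL_n\Gm)\times_{\cL_n^{\leq d}\bA^1}\cL^{\leq d}\bA^1\longrightarrow\cL^{\leq d}\bA^1$$
whose limit is $\beta_d$ because limits commute with fiber products. This step is formal given your two identifications, so your argument is one base change away from the statement in the form the paper actually uses it.

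The genuine gap is in your closing paragraph: after that base change, your parenthetical claim that the transition morphisms ``happen to be smooth'' becomes false, and the clause you dismiss as vacuous is the actual content of the corollary. For $m\geq n\geq d$ the truncation square relating $\beta_{d,m}$ to $\beta_{d,n}$ commutes but is \emph{not} Cartesian, and the induced morphism
$$Q_d\times\cL_m\Gm\longrightarrow(Q_d\times\cL_n\Gm)\times_{\cL_n\bA^1}\cL_m\bA^1$$
is not smooth: both sides are smooth varieties of the same dimension, and at a point lying over $q=t^d$ the tangent map fails to be surjective (e.g.\ for $d=1$, $n=1$, $m=2$ the coefficient of $t^2$ of $qu$ is forced, while it is a free coordinate on the fiber product). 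Since $\cL^{\leq d}\bA^1\to\cL_m^{\leq d}\bA^1$ is a projection of a product, the transition maps of the fixed-target system $\{\beta_{d,n}^\infty\}$ inherit this failure. That non-smoothness is exactly what the corollary records: it is why $\beta_d$ is only of Weierstrass type rather than essentially smooth surjective (where transition maps are required to be smooth surjective), which is the distinction the paper sets up immediately afterwards. So your computations are reusable, but as written the proof establishes a weaker reading of the statement and ends with an incorrect interpretation of the transition-morphism clause.
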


\begin{proof}
By base change we obtain smooth morphisms
$$\beta_{d,n}^\infty:(Q_d \times \cL_n \Gm) \times_{\cL\bA^1} \cL_n\bA^1 \to \cL\bA^1$$
that form a projective system whose limit if $\beta_d$. One should pay a special attention to the fact that the transition maps in this system are not smooth. Indeed, on finite levels, we have a commutative diagram
\begin{equation}
\begin{tikzcd}
Q_d \times \cL_m \Gm \arrow{r}{\beta_m} \arrow{d}[swap]{p_1}
& \cL_m \bA^1 \arrow{d}{p_2}\\
Q_d \times \cL_n \Gm \arrow{r}[swap]{\beta_n}
& \cL_n \bA^1
\end{tikzcd}
\end{equation}
for all integers $m\geq n$ in which all maps are smooth. The diagram is however not Cartesian and it induces a morphism
\begin{equation}
(Q_d \times \cL_m \Gm) \to (Q_d \times \cL_n \Gm) \times_{\cL_n \bA^1}\cL_m \bA^1 	
\end{equation} 
that is not smooth. Thus the transition morphisms in the projective system $\plim_{n}\beta_{d,n}^\infty=\beta_d$ are not smooth.
\end{proof}

Based on the discussion on the property of the Weierstrass morphism $\beta$ of \eqref{beta-d}, we will introduce a new type of morphisms which seems to be useful in dealing with arc spaces.

\begin{definition}\label{Weierstrass-type}
	A morphism $f:X\to Y$ of affine $k$-schemes is said to be of Weierstrass type if it satisfies the following properties:
	\begin{itemize}
		\item $f$ is a filtered limit of a projective system of smooth morphisms in which transition maps are not required to be smooth;
		\item for every $k$-point $x\in X(k)$ with image $y\in Y(k)$, the induced morphism on formal completions $\hat X_x \to \hat Y_y$ is formally smooth.
	\end{itemize}
	If the codomain is the point $Y=\Spec(k)$, we will say that $X$ is a $k$-scheme of Weierstrass type.
\end{definition}

 Arbitrary base changes of a morphism of Weierstrass type are of Weierstrass type. 
Composition of morphisms of Weierstrass type is also of the same type.
As examples, the Weierstrass maps $\alpha$ and $\beta$  of \eqref{beta-d} is of Weierstrass type.

We will introduce a much stronger notion:

\begin{definition}
\begin{itemize}
	\item A morphism $f:X\to Y$ is said to be essentially smooth surjective if it is the filtered limit of a projective system of smooth surjective morphism in which transition maps are also smooth surjective.
	\item A $k$-scheme $X$ is said to be essentially smooth if the structural morphism  $f:X\to \Spec(k)$ is essentially smooth surjective.
	\item A $k$-scheme $X$ is said to be essentially of finite type if there exists an essentially smooth surjective morphism $f:X\to S$ where $S$ is a $k$-scheme of finite type. 
\end{itemize}
\end{definition}

A $k$-scheme essentially of finite type is of course also of Weierstrass type. The converse is wrong. There are much more $k$-schemes of Weierstrass type than $k$-schemes essentially of finite type. We will prove an useful criterion to recognize a morphism of Weierstrass type.

\begin{proposition}\label{criterion-Weierstrass}
	Let $X$ and $Y$ be $k$-schemes essentially of finite type. If $f:X\to Y$ be a morphism that induces formally smooth morphisms on formal completions of $k$-points, then $f:X\to Y$ is a morphism of Weierstrass type. 
\end{proposition}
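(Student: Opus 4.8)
The plan is to observe first that the second defining property of a morphism of Weierstrass type in Definition \ref{Weierstrass-type} is literally the hypothesis, so the whole task reduces to the first property: exhibiting $f$ as a filtered limit of a projective system of smooth morphisms. To set up, I would use that $X$ and $Y$ are essentially of finite type to express them as limits $X=\varprojlim_i X_i$ and $Y=\varprojlim_j Y_j$ of projective systems of affine $k$-schemes of finite type, arranged so that the transition morphisms and the projections $p_i\colon X\to X_i$ and $Y\to Y_j$ are smooth surjective; in particular each $p_i$ and each $Y\to Y_j$ is essentially smooth surjective, hence formally smooth.

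Next I would approximate $f$ at finite level. For each $j$ put $f_j=(Y\to Y_j)\circ f\colon X\to Y_j$, so that $f=\varprojlim_j f_j$; since $Y_j$ is of finite type and the $X_i$ have affine transition maps, each $f_j$ factors as $f_j=g_j\circ p_{i(j)}$ for some morphism $g_j\colon X_{i(j)}\to Y_j$. A standard cofinality and reindexing bookkeeping — two morphisms into a finite type scheme that agree after pullback to $X$ already agree at a finite stage (EGA IV, \S 8) — would then organise the $g_j$ into an honest projective system $g_\lambda\colon X_{a(\lambda)}\to Y_{b(\lambda)}$, filtered in $\lambda$, with $f=\varprojlim_\lambda g_\lambda$, and whose transition morphisms are the (smooth surjective, hence in any case admissible) transition morphisms of the two given systems. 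I expect this step to be routine.

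The hard part will be proving that each $g_\lambda=g\colon X_i\to Y_j$ is smooth, and this is where I would concentrate. I would first check formal smoothness of $\hat g$ at every $k$-point $x$, with images $x_i\in X_i(k)$ and $y_j\in Y_j(k)$. In the commutative square of completions attached to $f_j=g\circ p_i$, the diagonal $\hat f_j=\widehat{(Y\to Y_j)}\circ\hat f$ is a composite of two formally smooth maps, hence formally smooth; and $\hat p_i\colon\hat\cO_{X_i,x_i}\to\hat\cO_{X,x}$, being a filtered limit of formally smooth local homomorphisms of complete local rings with residue field $k$, exhibits the target as a completed power series algebra over the source, so that sending all the power-series variables to zero gives a ring retraction $\sigma$ with $\sigma\circ\hat p_i=\id$. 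Given a nilpotent lifting problem for $\hat g$, composing its reduction with $\sigma$ extends it over $\hat\cO_{X,x}$, where it is solvable because $\hat f_j=\hat p_i\circ\hat g$ is formally smooth; restricting the solution back along $\hat p_i$ then solves the original problem. This cancellation is exactly the manoeuvre already used in the proof of Proposition \ref{beta-prop}, and it yields formal smoothness of $\hat g$ at every $k$-point. Finally, since $X_i$ and $Y_j$ are of finite type over $k$, I would pass from formal smoothness at all $k$-points to genuine smoothness of $g$, again as in Proposition \ref{beta-prop}. The two delicate points I anticipate are justifying the retraction $\sigma$ (the power-series structure of a formally smooth extension with trivial residue-field growth, here possibly in infinitely many variables) and this last passage from pointwise formal smoothness to smoothness of a finite type morphism; granting these, $f=\varprojlim_\lambda g_\lambda$ is a filtered limit of smooth morphisms, which together with the hypothesis shows that $f$ is of Weierstrass type.
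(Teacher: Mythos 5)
Your proposal is correct, and its core coincides with the paper's proof: factor $f$ through finite-type levels $f_{i,j}:X_i\to Y_j$, deduce formal smoothness of $\hat f_{i,j}$ at (images of) $k$-points by cancelling the formally smooth projection $\hat X_x\to \hat X_{i,x_i}$ in the square of completions, and then use finite-typeness of $X_i$, $Y_j$ together with surjectivity of $X\to X_i$ to upgrade this to smoothness of $f_{i,j}$. Where you diverge is in the final packaging, and here the paper's route is worth noting: instead of reindexing the maps $g_j$ into a bi-indexed projective system (your EGA IV, \S 8 bookkeeping, which is the fussiest part of your plan), the paper simply base-changes each smooth $f_{i,j}:X_i\to Y_j$ to the smooth morphism $X_i\times_{Y_j}Y\to Y$ and observes that $f$ is the filtered limit of these morphisms over the \emph{fixed} target $Y$; since Definition \ref{Weierstrass-type} does not require smooth transition maps, no cofinality argument is needed at all. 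Also, your retraction $\sigma$ does not need the completed-power-series structure you flag as delicate: formal smoothness of $\hat X_x\to\hat X_{i,x_i}$ alone produces a section, by lifting the $k$-point $x$ successively over the thickenings $\Spec\bigl(\hat\cO_{X_i,x_i}/\fm^n\bigr)$ and using completeness of $\hat\cO_{X_i,x_i}$ to pass to the limit, which disposes of the first of your two anticipated difficulties; the second (pointwise formal smoothness at $k$-points implies smoothness for finite-type morphisms) is used by the paper as well, exactly as in Proposition \ref{beta-prop}.
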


\begin{proof}
Let us write $X=\Spec(A)$ where $A=\ilim (A_i)$ with $A_i$ being $k$-algebras of finite type and $X_i=\Spec(A_i)$. Similarly we write $Y=\Spec(B)$ where $B=\ilim (B_j)$ with $B_j$ being $k$-algebras of finite type and $Y_j=\Spec(B_j)$. The restriction of $\phi:B\to A$ to $B_j$ has image of finite type so that $\phi(B_j) \subset A_i$ for some $i$. We have a commutative diagram
\begin{equation}
	\begin{tikzcd}
X \arrow{r}{f} \arrow{d}[swap]{\mu_i}
& Y \arrow{d}{\nu_j} \\
X_i \arrow{r}[swap]{f_{i,j}}
& Y_j
\end{tikzcd}
\end{equation}
Let $x\in X(k)$, $y=f(x)\in Y(k)$. If we denote $x_i$ and $y_j$ the images of $x$ and $y$ in $X_i$ and $Y_j$ respectively then we have $f_{i,j}(x_i)=y_j$. We consider the diagram of formal completions
\begin{equation}
	\begin{tikzcd}
\hat X_x \arrow{r}{\hat f_x} \arrow{d}[swap]{\hat \mu_{i,x}}
& \hat Y_y \arrow{d}{\hat \nu_{j,y}} \\
\hat X_{i,x_i} \arrow{r}[swap]{\hat f_{i,j,x_i}}
& \hat Y_{j,y_j}
\end{tikzcd}
\end{equation}
where $\hat f_{x}, \hat \mu_{i,x}, \hat \nu_{j,y}$ are formally smooth. It follows that $\hat f_{i,j,x_i}$ is also formally smooth.
For $X_i$ and $Y_j$ are $k$-schemes of finite type, the morphism $f_{i,j}:X_i\to Y_j$ is smooth at $x_i$. Since $X\to X_i$ is surjective, this implies that $f_{i,j}:X_i\to Y_j$ is smooth. 
Now $f:X\to Y$ can be realized as the projective limit of smooth morphisms $X_i\times_{Y_j} Y \to Y$ as $j\to \infty$. 
\end{proof}

\section{Variations on the Weierstrass division theorem}

In the classical setting, the Weierstrass division theorem asserts that if $f$ is a germ of holomorphic functions at the origin of $\C^{n+1}$ given with coordinates $x_1,\ldots,x_n,t$, $q$ is a polynomial of variable $t$ of degree $d$ of the form $q=q_0 + q_1 t +\cdots q_{d-1} t^{d-1} + t^d$ where $q_0,\ldots,q_{d-1}$ are germs of holomorphic functions at the origin of $\C^n$ equipped with coordinates $x_1,\ldots,x_n$ and vanishing at the origin, then there exist an unique expression $f=qh+a$ where $h$ is a germ of holomorphic function at the origin of $\C^{n+1}$ and $a$ is a polynomial of variable $t$ of degree less than $d$ whose coefficients are germs of holomorphic functions at the origin of $\C^n$. 

In this section we aim to study the Weierstrass division problem when we replace the ring of germs of holomorphic functions at the origin of $\C^n$ by an arbitrary $k$-algebra $R$, $f$ by a formal series with coefficients in $R$, and $q\in R[t]$ a monic polynomial of degree $d$.  

Recall that we have denoted $Q_d$ the $d$-dimensional affine space representing the functor $R\mapsto Q_d(R)$ of all monic polynomials of degree $d$ with coefficients in $R$.
Let us now denote $A_d$ the $d$-dimensional affine space representing the functor $R\mapsto A_d(R)$ of all polynomials of degree less than $d$. The Weierstrass division problem for a varying test ring $R$ consists in studying the morphism
\begin{equation}\label{alpha-d}
	\alpha_d:Q_d\times A_d \times \cL \bA^1 \to Q_d \times \cL \bA^1
\end{equation}
given by the formula:
\begin{equation}
	\alpha_d(q,a,v)=(q,qv+a).
\end{equation}
In a sense, the morphism $\alpha_d$ is the derivative of the morphism $\beta_d$ of \eqref{beta-d}.

The morphism $\beta_d:X\to Y$ of \eqref{beta-d} where $X=Q_d\times \cL \Gm$ and $Y=\cL^\bullet \bA^1$ induces a morphism of tangent bundles
\begin{equation} \label{dbeta}
	\d \beta_d:\rT X \to \rT Y\times_Y X.
\end{equation} 
where $\rT X= Q_d\times \cL \Gm \times A_d \times \cL \bA^1$, $\rT Y=\cL^\bullet \bA^1 \times \cL \bA^1$, and $\rT Y\times_Y X= Q_d\times \cL \Gm \times \cL \bA^1$. If $(q,u;a,v)\in Q_d\times \cL \Gm \times A_d \times \cL \bA^1$ then 
\begin{equation}
	\d \beta_d(q,u,a,v)=(q,u,ua+qv)
\end{equation}
which is a point of $Q_d\times \cL \Gm \times \cL \bA^1$. We obtain $\alpha_d$ from $\d\beta_d$ by setting the parameter $u$ to be $1$.

\begin{proposition} \label{alpha-smoothness}
	For every integer $d\in\NN$, the morphisms $d\beta_d$ of \eqref{dbeta} and  $\alpha_d$ of \eqref{alpha-d} are surjective and induces formally smooth morphisms on formal completions of points. Over the point $q=t^d$, it induces isomorphism on formal completions of points.
\end{proposition}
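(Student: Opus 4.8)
The plan is to treat $\d\beta_d$ and $\alpha_d$ simultaneously. Since $\alpha_d$ of \eqref{alpha-d} is obtained from $\d\beta_d$ of \eqref{dbeta} by setting the unit coordinate $u=1$, and since for a fixed unit $u\in R[[t]]^\times$ multiplication by $u$ is an automorphism of $R[[t]]$, in both cases the assertion reduces to the same division problem: understanding the map $(a,v)\mapsto qv+a$ (respectively $(a,v)\mapsto ua+qv$) from $A_d\times\cL\bA^1$ to $\cL\bA^1$, the coordinates $q$ (and $u$) being carried over identically. I would check (i) surjectivity on $k$-points, (ii) the infinitesimal lifting property over Artinian local test rings, exactly as in the proof of Prop.~\ref{beta-formal-smooth}, and (iii) that over $q=t^d$ the lift is in fact unique, which upgrades formal smoothness to an isomorphism on formal completions.

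For (i), fix a $k$-point $(q,f)\in (Q_d\times\cL\bA^1)(k)$ with $f$ arbitrary. The monic polynomial $q\in k[t]$ of degree $d$ is non-degenerate, so by Prop.~\ref{strict-Weierstrass-Artin} applied to the field $k$ it has a strict Weierstrass factorization $q=u_0q_0'$ with $u_0\in k[[t]]^\times$ and $q_0'$ monic of degree $e=\val(q)\le d$ dividing some $t^n$. Then $(q)=(q_0')$ and, by the argument already used in Prop.~\ref{algebra}, $k[[t]]=q_0'\,k[[t]]\oplus k[t]_{<e}$, so the composite $k[t]_{<d}\hookrightarrow k[[t]]\to k[[t]]/(q)$ is surjective (already $k[t]_{<e}$ surjects). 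Hence there is $a\in A_d(k)$ with $f-a\in(q)$, i.e.\ $f=qv+a$ for some $v\in k[[t]]$; this proves $\alpha_d$, and likewise $\d\beta_d$, is surjective on $k$-points.

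For (ii), following Prop.~\ref{beta-formal-smooth} and the standard dévissage it suffices to treat a square-zero extension. Let $R$ be Artinian local with residue field $k$, $I\subset R$ with $I^2=0$, $\bar R=R/I$; given $(\bar q,\bar a,\bar v)$ over $\bar R$ and a lift $(q,f)$ of its image over $R$, choosing arbitrary lifts $\tilde a,\tilde v$ and setting $g=f-q\tilde v-\tilde a\in I[[t]]$, one must find $\delta a\in I[t]_{<d}$ and $\delta v\in I[[t]]$ with $q\,\delta v+\delta a=g$. As $I^2=0$, multiplication by $q$ on $I[[t]]$ factors through $\bar q\in\bar R[[t]]$, so the problem becomes the surjectivity of $(\delta v,\delta a)\mapsto \bar q\,\delta v+\delta a$ onto $M[[t]]$, where $M=I$ is viewed as an $\bar R$-module. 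The crucial step — the one I expect to be the main obstacle, because for a general monic $\bar q$ (not of the form $t^d$) naive power-series division ``from the top'' does not converge and $\bar R[[t]]/(\bar q)$ need not be free of rank $d$ — is to factor $\bar q=\bar u_0\bar q_0'$ over the Artinian local ring $\bar R$ using Prop.~\ref{strict-Weierstrass-Artin}, with $\bar q_0'$ monic of degree $e\le d$ dividing $t^n$. Since $\bar q_0'\mid t^n$ one has $t^nM[[t]]\subseteq \bar q_0'M[[t]]$, so the quotient $M[[t]]/\bar q_0'M[[t]]$ is computed inside the finite module $M[t]_{<n}$, where ordinary division by the monic $\bar q_0'$ gives $M[[t]]=\bar q_0'M[[t]]\oplus M[t]_{<e}$. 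As $(\bar q)=(\bar q_0')$ and $e\le d$, this yields the desired surjection and hence the lift; the extra unit coordinate $u$ in $\d\beta_d$ only rescales $M[t]_{<d}$ and does not affect the conclusion.

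Finally, for the isomorphism over $q=t^d$: at such a $k$-point every nearby $\bar q$ reduces to $t^d$ modulo the maximal ideal, forcing $e=\val=d$ in the factorization above, so $M[[t]]=\bar q M[[t]]\oplus M[t]_{<d}$ is a genuine direct sum of the correct rank. The division $(\delta v,\delta a)\mapsto \bar q\,\delta v+\delta a$ is then bijective, so the infinitesimal lift is unique as well as existent; consequently $\hat X_x\to\hat Y_y$ induces an isomorphism of deformation functors on Artinian $k$-algebras, that is, an isomorphism of formal completions. The same computation applies verbatim to $\d\beta_d$ with $u=1$ fixed.
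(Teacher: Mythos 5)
Your proof is correct, but it takes a genuinely different route from the paper's. The paper's main argument is purely structural and very short: $\d\beta_d$ is the derivative of $\beta_d$, which is already known to be of Weierstrass type (Props.~\ref{beta-formal-smooth} and \ref{beta-prop}); since derivatives and base changes of morphisms of Weierstrass type are again of Weierstrass type, $\d\beta_d$ and $\alpha_d$ inherit surjectivity and formal smoothness on completions, and the isomorphism over $q=t^d$ is likewise inherited from the corresponding statement for $\beta_d$. The paper also sketches a second, computational argument, closer in spirit to yours but carried out at finite jet levels: $\alpha_{d,n}$ is a linear morphism of finite-rank vector bundles over $Q_d$, so smoothness reduces to fiberwise surjectivity of $A_d(k)\times k[[t]]\to k[[t]]$, which follows from Euclidean division plus the identity $k[[t]]/(q)=k[t]/(t^e)$, $e$ being the multiplicity of $t$ in $q$. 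What you do instead is verify the Artinian infinitesimal lifting criterion directly at the arc-space level, in effect redoing the proof of Prop.~\ref{beta-formal-smooth} in the linear (division) setting; your key decomposition $M[[t]]=\bar q_0'M[[t]]\oplus M[t]_{<e}$, obtained from the strict Weierstrass factorization $\bar q=\bar u_0\bar q_0'$ of Prop.~\ref{strict-Weierstrass-Artin}, is precisely the Artinian-coefficient version of the paper's identity $k[[t]]/(q)=k[t]/(t^e)$. Your route buys a self-contained proof of the isomorphism over $q=t^d$, via uniqueness of the division, rather than quoting the analogous (and not fully spelled out) statement for $\beta_d$; the paper's finite-level route buys in addition the smoothness of the truncations $\alpha_{d,n}$, i.e.\ the ``limit of smooth morphisms'' half of the Weierstrass-type property, which your argument does not address (though the proposition as stated does not require it). Two details you should make explicit, since your step (iii) needs bijectivity and not merely surjectivity: the sum $\bar q_0'M[[t]]+M[t]_{<e}$ is direct, and multiplication by $\bar q$ on $M[[t]]$ is injective; both follow from $\bar q_0'\mid t^n$ together with the injectivity of multiplication by $t^n$ on $M[[t]]$.
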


\begin{proof}
Because $\beta_d$ is a morphism of Weierstrass type, so is its derivative $\d\beta_d$. By base change, $\alpha_d$ is also a morphism of Weierstrass type. The same argument applies for the property of being surjective on $k$-points.
Over the point $q=t^d$, $\beta_d$ induces an isomorphism on formal completions of points, so are $\d\beta_d$ and $\alpha_d$. 

One could also reverse the argument and prove first that $\alpha_d$ induces a smooth morphism at finite levels. Indeed, for every $n\geq d$, the morphism
\begin{equation}\label{alpha-d-n}
	\alpha_{d,n}: Q_d\times A_d \times \cL_n \bA^1 \to Q_d \times \cL_n \bA^1
\end{equation} 
is a linear morphism of finite rank vector bundles over $A_d$. It is enough to prove that $\alpha_{d,n}$ induces a surjective linear morphism in every fiber. It is enough to prove that for a monic polynomial $q\in Q_d(k)$ of degree $d$ with coefficients in $k$, the $k$-linear map
$$A_d(k) \times k[[t]]/(t^n) \to k[[t]]/(t^n)$$
given by the formula \eqref{alpha-d} is surjective. 
We will prove that the $k$-linear map
$$A_d(k) \times k[[t]] \to k[[t]]$$
given by the formula \eqref{alpha-d} is surjective. This is equivalent to saying that the map 
$$A_d(k) \to k[[t]]/(q)$$ 
is surjective. This map factors as
$$A_d(k) \to k[t]/(q) \to k[[t]]/(q)$$
where $A_d(k) \to k[t]/(q)$ is an isomorphism by the Euclidean division theorem. The map $k[t]/(q) \to k[[t]]/(q)$ is surjective as $k[[t]]/(q)=k[t]/(t^e)$ where $e$ is the multiplicity of $t$ dividing $q$.
\end{proof}

We will now exhibit a strange family of finite-dimensional schemes of Weierstrass type. For every $d\in\N$, we denote $S_d$ the kernel of morphism $\alpha_d$ of \eqref{alpha-d}, in other words, we have the Cartesian diagram
\begin{equation} \label{S-d}
	\begin{tikzcd}
S_d \arrow{r}{} \arrow{d}[swap]{\alpha_K}
& Q_d \times A_d \times \cL \bA^1 \arrow{d}{\alpha_d} \\
Q_d \arrow{r}[swap]{0}
& Q_d \times \cL\bA^1
\end{tikzcd}
\end{equation}
where $0$ is the zero section $q\mapsto (q,0)\in A_d\times \cL\bA^1$.
The scheme $S_d$ is a generalized vector bundle over $Q_d$ in the sense that its fiber over each point $q\in Q_d(k)$ is a $k$-vector space.
The dimension of the fibers varies however in the opposite direction of what happens with a morphism of finite type. Indeed, for every monic polynomial $q\in Q_d(k)$ of degree $d$, the fiber $S_{d,q}(k)$ of $S_d$ over $q$ is a $k$-vector space of dimension $d-e$ where $e$ is order of vanishing of $q$ at $0$. In particular, if $q(0)\neq 0$ then $\dim  S_{d,q}(k)=d$, and if $q=t^d$ then $S_{d,q}(k)=0$.

Let us compute explicitly the ring of coordinates of $S_1$. We will write $Q_1=\Spec(k[q_0])$ with the universal monic polynomial of degree one $q=q_0+t \in Q_1(k[q_0])$. We will denote $A_1=\Spec(k[a_0])$ the space of polynomial of degree less than one and $\cL \bA^1=k[x_0,x_1,\ldots]$. The scheme $S_1$ is defined as the closed subscheme of $Q_1\times A_1\times \cL \bA^1$ defined by the system of equations 
\begin{eqnarray*}
	a_0 + q_0 x_0 &=& 0 \\
	x_0 + q_0 x_1 &=& 0 \\
	x_1 + q_0 x_2 &=& 0\\
	\ldots
\end{eqnarray*}
In other words, we have $S_1=\Spec(R_1)$ where
\begin{equation} \label{equations-S1}
 	R_1=k[q_0,a_0,x_0,x_1,\ldots,]
/(a_0+q_0x_0,x_0 +q_0 x_1,\ldots). 
\end{equation}
The fiber of $S_1$ over a point with coordinate $q_0\neq 0$ in $Q_1$ is the affine line since the coordinate $a_0$ will determine other coordinates by $x_0=-{a}^{-1} a_0$, $x_1=-{a}^{-1}x_0$ ... On the other hand, the fiber of $S_1$ over the point $q_0=0$ in $Q_1$ is reduced to the point of coordinates $a_0=0$ and $x_0=x_1=\cdots =0$.

We may look at $S_1$ from three different angles. First, from the equations defining $S_1$ we see that $S_1$ is a projective limit of affine planes, in other words its ring of coordinates $R_1$ is an inductive limit $R_1=\ilim A_n$ with $A_n=k[q_0,x_n]$ with transition maps $A_n \mapsto A_{n+1}$ given by $x_n \mapsto -q_0 x_{n+1}$. The morphism $\Spec(A_{n+1}) \to \Spec(A_n)$ can be realized as the blow-up of the affine plane $\bA^2=\Spec(A_n)$ at the origin $(q_0,x_n)=(0,0)$ with the strict transform of the $x_n$-axis deleted.

Second, set theoretically speaking, $S_1$ looks like a "sawed plane" i.e. the plane of coordinates $(q,x)$ with the $x$-axis $(0,x)$ deleted and the origin $(0,0)$ added back in. This constructible subset of the plane can be thus realized as the set of points of a scheme which is of course not of finite type. 

Third, the sawed plane $S_1$ may be seen in some sense as the glueing of the plane $(q,x)$ with the $x$-axis deleted and with the $q$-axis $(q,0)$. Indeed, if $A$ is a $k$-algebra of finite type and $\phi:R_1\to A$ be a $A$-point of $S_1$, mapping some prime ideal $\fp_A$ of $A$ to the prime ideal of $R_1$ generated by $\fp_0=(q_0,a_0,x_0,\ldots)$ corresponding to the origin $(0,0)$ of $S_1$, then the morphism $\Spec(A)\to S_1$ must factor through the $q$-axis. Indeed, under this assumption we have $\phi(q_0)\in \fp_A$. Since the variables $a_0,x_0,\ldots$ are divisible by every power of $q_0$, we have $\phi(a_0),\phi(x_0) , \ldots \in \fp_A^n$ for every $n\in\N$. Since $A$ is a $k$-algebra of finite type, it follows that $\phi(a_0)=\phi(x_0)=\cdots=0$.

Let us identify the $q$-axis with the subscheme of $S_1$ defined by the equations $a_0=x_0=x_1=\cdots=0$. We can show that that the completion of $S_1$ at origin is isomorphic to the completion of the $q$-axis at the origin defined by the maximal ideal $\fq$ of $R_1$. Indeed, for every $n$, we have $a_0,x_0,x_1,\ldots \in \fq^n$ and therefore $R_1/\fp_0^n = k[q_0]/q_0^n$.

\begin{proposition} For every $d\in\N$, let $S_d$ be the kernel of the homomorphism $\alpha_d$ of \eqref{alpha-d} defined by the Cartesian diagram \eqref{S-d}.
	\begin{enumerate}
		\item $S_d$ is the projective limit of kernels of homomorphisms $\alpha_{d,n}$ of \eqref{alpha-d-n} which are vector bundles of rank $d$ over $Q_d$ as $n\to\infty$. In particular $S_d$ is a scheme of Weierstrass type.
		\item As topological space $S_d$ is the constructible subspace of $Q_d\times A_d$ whose fiber over $q\in Q_d(k)$ is a $k$-vector space of rank $d-e$ where $e$ is the order of vanishing of $q$ at $t=0$. In particular, $S_d$ contains the zero section $Q_d\times \{0\}$ of $Q_d\times A_d$. 
		\item For every $k$-algebra of finite type $A$ and $\phi:\Spec(A)\to S_d$ a $A$-point of $S_d$ whose image contains the point $(t^d,0)\in Q_d\times A_d$. Then $x$ factors through the zero section $Q_d\times \{0\}$.
		\item The completion of $S_d$ at the point $(t^d,0)$ is isomorphic to the completion of the zero section at the same point. 
	\end{enumerate}
\end{proposition}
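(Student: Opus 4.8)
The plan is to extract all four assertions from the single defining equation $qv+a=0$ of $S_d$ inside $Q_d\times A_d\times\cL\bA^1$ together with the properties of $\alpha_d$ already established in Prop.~\ref{alpha-smoothness}. For (1) I would first identify the finite-level kernels. Since the proof of Prop.~\ref{alpha-smoothness} shows each $\alpha_{d,n}$ of \eqref{alpha-d-n} is a surjective $\cO_{Q_d}$-linear morphism of vector bundles of ranks $d+(n+1)$ and $n+1$, its kernel $S_{d,n}=\ker\alpha_{d,n}$ is locally free of rank $d$ over $Q_d$, and $S_d=\plim S_{d,n}$. The Weierstrass-type claim then needs no new computation: $S_d\to Q_d$ is the base change of $\alpha_d$ along the zero section $Q_d\to Q_d\times\cL\bA^1$, so it is of Weierstrass type because $\alpha_d$ is and Weierstrass type is stable under base change; composing with the smooth map $Q_d\to\Spec(k)$ and using stability under composition shows that $S_d$ is a $k$-scheme of Weierstrass type.

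For (2) I would study the projection $\pi:S_d\to Q_d\times A_d$ forgetting $v$. Over a point $q$ with $\ord_0(q)=e$, writing $q=t^e\tilde q$ with $\tilde q$ a unit in $k[[t]]$, the equation $qv+a=0$ becomes $v=-\tilde q^{-1}t^{-e}a$, so a solution $v\in k[[t]]$ exists if and only if $t^e\mid a$, and is then unique. This exhibits $\pi$ as injective on points with image $\{(q,a):t^{\ord_0 q}\mid a\}$, and shows that the fibre of $S_d\to Q_d$ over $q$ is the space of admissible $a$, a $k$-vector space of dimension $d-e$. Stratifying $Q_d$ by the locally closed loci $\{\ord_0 q=e\}$, on each of which $t^e\mid a$ is a closed linear condition, displays the image as constructible and $\pi$ as a homeomorphism onto it; in particular the zero section $Q_d\times\{0\}$ (where $a=0$, forcing $v=0$) is contained in $S_d$.

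For (3) the engine is the system of relations obtained by expanding $qv+a=0$ around $q=t^d$. Writing $q=t^d+\sum_{i<d}q_it^i$ and comparing coefficients gives, after pulling back by $\phi$ and using that $\bar q_i,\bar a_j,\bar x_m\in\fp_A$, the recursions $\bar x_m=-\sum_{i<d}\bar q_i\bar x_{m+d-i}$ and $\bar a_j=-\sum_{i\le j}\bar q_i\bar x_{j-i}$. Hence the ideal $\fa=(\bar x_0,\bar x_1,\dots)$ satisfies $\fa=\fp'\fa$ with $\fp'=(\bar q_0,\dots,\bar q_{d-1})\subseteq\fp_A$; since $A$ is of finite type over $k$ and therefore Noetherian, $\fa$ is finitely generated, and the determinant trick (Krull's intersection theorem) yields $y\in\fp'$ with $(1-y)\fa=0$. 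As $1-y$ is a unit at $\fp_A$ this forces $\bar x_m=\bar a_j=0$, which is precisely the statement that $\phi$ factors through the zero section, exactly as in the computation for $S_1$ preceding the proposition.

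Finally (4) is a corollary of Prop.~\ref{alpha-smoothness}: over $q=t^d$ the morphism $\alpha_d$ induces an isomorphism on formal completions of points, and $S_d$ is the fibre of $\alpha_d$ over the zero section. Pulling back the completed zero section $\hat Q_d\hookrightarrow(Q_d\times\cL\bA^1)^\ehat_{(t^d,0)}$ along this isomorphism identifies $(S_d)^\ehat_{(t^d,0)}$ with $\hat Q_d$, the completion of the zero section. The main obstacle I anticipate is not any single assertion but the interference of the infinite-dimensionality of $\cL\bA^1$: upgrading (2) from a bijection on closed points to a genuine homeomorphism, and justifying in (4) that completion commutes with the base change defining $S_d$ although $Q_d\times\cL\bA^1$ is non-Noetherian. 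Both are controlled by descending to the finite levels $S_{d,n}$ from (1), where every scheme is of finite type, but it is this reduction that has to be carried out carefully.
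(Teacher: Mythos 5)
Your treatment of parts (1), (2) and (4) is correct, and it is essentially the paper's own route: the paper's entire proof is the remark that one proceeds as in the explicit $d=1$ computation of $S_1$ given just before the proposition, and your finite-level kernel argument (fiberwise surjectivity of $\alpha_{d,n}$ from Prop.~\ref{alpha-smoothness}, hence a rank-$d$ subbundle kernel), your fiberwise description of the image in $Q_d\times A_d$, and your identification of the completion at $(t^d,0)$ are faithful generalizations of that computation. In (4) you even take a slightly cleaner path than the paper, which argues directly that $a_j,x_m$ lie in every power of the maximal ideal at the origin; invoking the formal isomorphism of $\alpha_d$ over $q=t^d$ and base-changing along the zero section is equivalent and tidier.

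Part (3), however, contains a genuine gap, and it sits exactly at the last step. From $\fa=\fp'\fa$ with $\fa=(\bar x_0,\bar x_1,\ldots)$ finitely generated and $\fp'\subseteq\fp_A$, the determinant trick correctly gives $y\in\fp'$ with $(1-y)\fa=0$; but the conclusion you draw, that $\fa=0$ because $1-y$ is a unit \emph{at} $\fp_A$, is a non sequitur: it only yields $\fa A_{\fp_A}=0$, i.e.\ vanishing on the open neighborhood $D(1-y)$ of the point lying over the origin, not vanishing in $A$. Moreover, no argument can close this gap, because the global statement is false as written. Take $d=1$ and $A=k[s,u]/(u(1-s))$, a connected finite-type $k$-algebra in which $su=u$, and define $\phi:R_1\to A$ by $\phi(q_0)=s$, $\phi(a_0)=-u$, $\phi(x_m)=(-1)^m u$; the defining relations $a_0+q_0x_0$ and $x_m+q_0x_{m+1}$ all map to $\pm(u-su)=0$, so this is a valid $A$-point of $S_1$. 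The maximal ideal $(s,u)$ of $A$ pulls back to $\fp_0=(q_0,a_0,x_0,\ldots)$, so the image of $\Spec(A)$ contains the origin $(t,0)$, yet $\phi(x_0)=u\neq 0$, so $\phi$ does not factor through the zero section (analogous examples exist for every $d$). The correct conclusion available from your argument is local: $\phi$ factors through the zero section on a Zariski neighborhood of the preimage of $(t^d,0)$, and globally only under an extra hypothesis such as $A$ a domain, or $A$ local with closed point over the origin. You should be aware that the paper's own $d=1$ computation makes the identical jump --- ``Since $A$ is a $k$-algebra of finite type, it follows that $\phi(a_0)=\phi(x_0)=\cdots=0$'' silently uses $\bigcap_n\fp_A^n=0$, which fails for general finite-type $A$ --- so your proof reproduces the paper's argument, but your explicit appeal to Krull's theorem makes the flaw visible, and a careful write-up should either add a hypothesis on $A$ or weaken the conclusion to the local statement.
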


\begin{proof}
	The proof proceeds in the same way as the calculations in the case $d=1$ that we have just outlined.
\end{proof}

We will now consider another family of strange schemes springing out of Weierstrass division problem. For every $d\in\N$, we consider the homomorphism
\begin{equation}
	\mu_d: Q_d\times \cL \bA^1 \to Q_d\times \cL \bA^1
\end{equation}
defined by $(q,x)\mapsto (q,qx)$, and define $Z_d$ to be the kernel of $\mu_d$. For every $k$-algebra $R$, the set $Z_d(R)$ consists of pairs $(q,x)$ where $q\in R[t]$ is a monic polynomial of degree $d$ and $x\in R[[t]]$ a formal series with coefficients in $R$ satisfying $qx=0$. 

We may also define $Z_d$ as the closed subscheme of $S_d$ consisting of triples $(q,a,x)\in Q_d\times A_d\times \cL \bA^1$ with $a=0$. Set theoretically, there is no difference between $Z_d$ and the zero section $Q_d\times \{0\}$ as for every $q\in Q_d(k)$, the multiplication by $q$ is injective in $k[[t]]$. These spaces are however different as schemes, in other words $Z_d$ is a infinitesimal thickening of $Q_d\times \{0\}$. 

Let us consider again the case $d=1$ and describe explicitly the coordinate ring of $Z_1$.  
Using the equations \eqref{equations-S1} of $S_1$, we have
$Z_1=\Spec(V_1)$ where
\begin{equation} \label{equations-Z1}
 	V_1=k[q_0,x_0,x_1,\ldots,]
/(q_0x_0,x_0 +q_0 x_1,x_1+qx_2,\ldots). 
\end{equation}
The algebra $V_1$ can be realized as the inductive limit of $V_{1,n}=k[q_0,x_n]/(q_0^{n+1} x_n)$ with respect to transition maps $V_{1,n}\to V_{1,n+1}$ given by $x_n \mapsto -q_0 x_{n+1}$. We note that the transition maps $V_{1,n}\to V_{1,n+1}$ are all injective so that $x_n\neq 0$ in $V_1$. The scheme $Z_1$ is therefore the projective limit of schemes $Z_{1,n}=\Spec(V_{1,n})$ which topologically is a union of the $q_0$-axis and the $x_n$-axis as $n\to\infty$. We see that in the limit, the $x_n$-axis all contract topologically to the origin although they don't algebraically in the sense that $x_n\neq 0$. The ideal of $V_1$ generated by $x_0,x_1,\ldots$ defined the closed subscheme $Q_1\times \{0\}$ of $Z_1$.

\begin{proposition} \label{infinite-radical-V-d}
	 Let $Z_d=\Spec (V_d)$ and $I_d$ the ideal of $V_d$ defining the zero section $Q_d\times \{0\}$. Then we have $I_d^2=0$. Moreover every element $f\in I_d$ vanishes at every point $z\in Z_d$ at infinite order.  
\end{proposition}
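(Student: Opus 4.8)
The plan is to exploit the single power–series identity $qx=0$, valid in $V_d[[t]]$, where $q=q_0+\cdots+q_{d-1}t^{d-1}+t^d$ is the universal monic polynomial of degree $d$ and $x=\sum_{m\ge 0}x_m t^m$. By construction the coefficients of the product $qx$ are exactly the relations cutting out $V_d$, and $I_d=(x_0,x_1,\dots)$ is the ideal of the zero section. The decisive elementary input, used twice, is that a monic polynomial is a non-zero-divisor in $A[[t]]$ for \emph{every} commutative ring $A$: the formal division algorithm by a monic $q$ produces a unique quotient and remainder, so multiplication by $q$ on $A[[t]]$ is injective. (This holds over arbitrary, possibly non-reduced, $A$, which is what we need since $V_d$ has nilpotents.)

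For $I_d^2=0$ I would argue directly. Fix a coefficient $x_j\in V_d$ and multiply the identity $qx=0$ by the scalar $x_j$, obtaining $q\,(x_j x)=0$ in $V_d[[t]]$. Since $q$ is monic of degree $d$ it is a non-zero-divisor in $V_d[[t]]$, hence $x_j x=0$; reading off the coefficient of $t^m$ gives $x_j x_m=0$ for all $m$. As $j$ and $m$ are arbitrary, every product of two generators of $I_d$ vanishes, and because $I_d$ is generated as an ideal by the $x_m$, this yields $I_d^2=0$.

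Granting $I_d^2=0$, the ring becomes a square-zero extension $V_d=\bar V_d\oplus M$, where $\bar V_d=V_d/I_d=\cO_{Q_d}=k[q_0,\dots,q_{d-1}]$ and $M=I_d$ is a $\bar V_d$-module. Points $z\in Z_d$ correspond to primes $\fp$ of $\bar V_d$ (as $I_d$ is nilpotent), and the heart of part two is the claim that $M\otimes_{\bar V_d}\kappa(\fp)=0$ for every such $\fp$, where $\kappa(\fp)$ is the residue field. Here the non-zero-divisor fact enters again: the scheme-theoretic fiber of $Z_d\to Q_d$ over $\fp$ represents the functor $A\mapsto\{x\in A[[t]] : \bar q\,x=0\}$ on $\kappa(\fp)$-algebras, with $\bar q$ the monic image of $q$; this functor is trivial since $\bar q$ is a non-zero-divisor in $A[[t]]$. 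Thus $V_d\otimes_{\bar V_d}\kappa(\fp)=\kappa(\fp)$, and comparing with the decomposition $V_d\otimes_{\bar V_d}\kappa(\fp)=\kappa(\fp)\oplus\bigl(M\otimes_{\bar V_d}\kappa(\fp)\bigr)$ forces $M\otimes_{\bar V_d}\kappa(\fp)=0$.

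Finally I would convert this into infinite-order vanishing. Writing $B=\bar V_{d,\fp}$, $M'=M_{\fp}$ and $\fm=\fp B$, the local ring at $z$ is the square-zero extension $B\oplus M'$ with maximal ideal $\fm_z=\fm\oplus M'$, and an easy induction gives $\fm_z^{\,n}=\fm^{n}\oplus\fm^{\,n-1}M'$. The vanishing $M\otimes_{\bar V_d}\kappa(\fp)=0$ says exactly $M'=\fm M'$, whence $M'=\fm^{\,n-1}M'$ for all $n$; therefore the image of any $f\in I_d$ lies in $M'=\fm^{\,n-1}M'\subseteq\fm_z^{\,n}$ for every $n$, i.e. $f$ vanishes at $z$ to infinite order. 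The obstacle to keep in view is conceptual rather than computational: the local rings of $Z_d$ are badly non-Noetherian, so Krull's intersection theorem fails and $\bigcap_n\fm_z^{\,n}\ne 0$—this failure is precisely the pathology being exhibited—so one cannot reduce infinite-order vanishing to $f=0$. The real content is the flatness-type phenomenon that every fiber of the module $M$ vanishes even though $M\ne 0$.
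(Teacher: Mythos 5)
Your ``decisive elementary input'' is false, and its failure is precisely the phenomenon this proposition is designed to exhibit. A monic polynomial need \emph{not} be a non-zero-divisor in $A[[t]]$ for arbitrary commutative $A$: the universal counterexample is $A=V_d$ itself, where by construction the universal pair satisfies $qx=0$ in $V_d[[t]]$ while $x\neq 0$ (for $d=1$ each $x_n$ is nonzero in $V_1$, since its image in every finite stage $V_{1,m}=k[q_0,x_m]/(q_0^{m+1}x_m)$ of the colimit is $\pm q_0^{m-n}x_m\neq 0$). Indeed, if your lemma were true, applying it to $A=V_d$ and the identity $qx=0$ would give $x=0$, i.e. $I_d=0$: the proposition would be vacuous, contradicting the fact recorded immediately after it that $\nil_\infty(V_d)$ is a \emph{nonzero} ideal generated by the $x_n$. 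So the deduction $q(x_jx)=0\Rightarrow x_jx=0$ uses exactly the cancellation that is unavailable over $V_d$, and your proof of $I_d^2=0$ collapses. The justification offered is also wrong on its own terms: division-with-remainder of a \emph{power series} by a monic polynomial fails in $A[[t]]$ --- existence already fails for $q=t-a$ over $A=k[a]$ (the sawed-plane phenomenon of $S_1$), and uniqueness fails even over a field, where $q=1+t$ is a unit of $k[[t]]$ so every remainder occurs. Monicity gives Euclidean division in $A[t]$, not in $A[[t]]$: $t$-adically the leading term $t^d$ is the smallest term, not the dominant one. The correct proof of $I_d^2=0$ is the paper's direct manipulation of the relations: for $d=1$ they give $q_0^{n+1}x_n=0$ and $x_m=\pm q_0^{n+1}x_{m+n+1}$, hence $x_mx_n=\pm x_{m+n+1}(q_0^{n+1}x_n)=0$.

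Your second half can be repaired and then becomes a nice structural variant of the paper's argument. What you actually need there is only the field-coefficient case of the lemma: a monic $\bar q\in\kappa(\fp)[t]$ factors in $\kappa(\fp)[[t]]$ as $\bar q=t^eu$ with $u$ a unit (strict Weierstrass factorization over a field), and this factorization persists under any base change $\kappa(\fp)\to A$; hence multiplication by $\bar q$ on $A[[t]]$ is injective for every $\kappa(\fp)$-algebra $A$, because multiplication by $t^e$ and by units is injective --- not because of any division algorithm. Granting this and $I_d^2=0$, your chain (trivial fiber functor forces $M\otimes_{\bar V_d}\kappa(\fp)=0$, hence $M'=\fm M'$, hence $M'\subseteq\fm_z^n$ for all $n$) is correct, and is more conceptual than the paper's computation, which simply notes that $x_n=0$ in the local ring at any point where $q_0$ is invertible, while at the point $q_0=0$ one has $x_n=\pm q_0^jx_{n+j}\in\fm_z^j$ for every $j$. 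But as written the proposal does not prove the proposition: both halves lean on a lemma whose falsity over $V_d$ is the very content of the statement being proved.
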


\begin{proof}
	Let us restrict ourselves to the case $d=1$ as the general case is completely similar.
We first prove that $x_m x_n=0$ for all $m,n\in \N$. Since $x_m=(-1)^n q_0^{n+1} x_{m+n+1}$, we have $x_m x_n=(-1)^{n+1}  x_{m+n+1} q_0^{n+1} x_n=0$. 

We next prove that $x_n$ vanishes to an infinite order at every point of $\Spec(V_1)=\Spec (k[q_0])$. At a point $z$ of coordinate $q_0\neq 0$, the equation $q_0^{n+1} x_n=0$ implies that $x_n=0$ in the local ring $V_{1,z}$ in which $q_0$ is invertible. At the point $z$ of coordinate $q_0=0$, as $q_0$ belongs to the maximal ideal $\fm_z$, $x_n$ belongs to every power of $\fm_z$.
\end{proof}

\begin{definition} \label{nil-infinity}
For every commutative ring $R$, we will denote 
\begin{equation}
	\nil_\infty(R)=\bigcap_{\fp\in\Spec(R)} \bigcap_{n\in\N} \fp^n,
\end{equation}
the infinite nilradical of $R$, the intersection of all powers of all prime ideals of $R$. 
\end{definition}

Let us first collect some easy facts about the infinite nilradical. First, the infinite nilradical is obviously contained in the nilradical
$$\nil(R)=\bigcap_{\fp\in\Spec(R)} \fp,$$
which consists of all nilpotent elements of $R$. In particular, if $R$ is reduced then both $\nil(R)$ and $\nil_\infty(R)=0$. Second, as we can determine whether an element $x\in R$ belongs to the nilradical by evaluating it via the homomorphisms $\nu_\fp:R\to R/\fp$ for all prime ideal $\fp\in \Spec(R)$, we can also determine whether an element $x\in R$ belongs to the infinite nilradical by evaluating it via homomorphisms $\nu_\fp:R\to R/\fp^n$ for all $\fp\in\Spec(R)$ and $n\in \N$. In particular, $\nil_\infty(R)=0$ if and only if for every non-zero element $x\in R$, there exists a ring homomorphism $\nu:R\to A$ from $R$ to an Artinian local ring such that $\nu(x)\neq 0$.

\begin{proposition}
For a Noetherean ring $R$, we have $\nil_\infty(R)=0$.
\end{proposition}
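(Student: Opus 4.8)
The plan is to show that every nonzero element of $R$ escapes some power of some prime ideal; equivalently, following the remark preceding the statement, to produce for each nonzero $x\in R$ a homomorphism from $R$ to an Artinian local ring that does not kill $x$. Since every maximal ideal is in particular prime, it suffices to find, for a given $x\neq 0$, a maximal ideal $\fm$ and an integer $n$ with $x\notin \fm^n$: this already gives $x\notin\bigcap_n\fm^n$, hence $x\notin\nil_\infty(R)$ because $\nil_\infty(R)\subseteq\bigcap_n\fm^n$.

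First I would fix $0\neq x\in R$ and localize. The annihilator $\ann(x)$ is a proper ideal, so it is contained in some maximal ideal $\fm$. I claim that the image of $x$ in the local ring $R_\fm$ is nonzero: indeed, $x/1=0$ in $R_\fm$ would force $sx=0$ for some $s\notin\fm$, that is $s\in\ann(x)\subseteq\fm$, a contradiction. Thus $x/1\neq 0$ in the Noetherian local ring $R_\fm$.

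The key input is then Krull's intersection theorem (a consequence of the Artin--Rees lemma): for the Noetherian local ring $R_\fm$ with maximal ideal $\fm R_\fm$ one has $\bigcap_n (\fm R_\fm)^n=0$. Since $x/1\neq 0$, there is some $n$ with $x/1\notin (\fm R_\fm)^n=\fm^n R_\fm$. Pulling back along $R\to R_\fm$, this forces $x\notin\fm^n$ in $R$, since $x\in\fm^n$ would give $x/1\in\fm^n R_\fm$. Hence $x\notin\nil_\infty(R)$, and as $x$ was an arbitrary nonzero element, $\nil_\infty(R)=0$. Equivalently, one may package the conclusion in the form promised by the remark by taking $A=R_\fm/\fm^n R_\fm$, which is Artinian local and receives $x$ with nonzero image.

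The only real content here is Krull's intersection theorem; the remainder is the bookkeeping of localizing at a maximal ideal above the annihilator so as to bring oneself into the local setting where it applies. The main obstacle, if one insists on a self-contained argument, is thus establishing $\bigcap_n (\fm R_\fm)^n=0$, for which I would invoke the Artin--Rees lemma together with the determinant (Cayley--Hamilton) trick on the finitely generated module $\bigcap_n (\fm R_\fm)^n$.
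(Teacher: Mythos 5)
Your proof is correct, but it takes a genuinely different route from the paper. The paper argues globally via primary decomposition: it writes the zero ideal as $I_1\cap\cdots\cap I_r$ with $I_i$ primary and $\sqrt{I_i}=\fp_i$ the associated primes, uses Noetherianness to get $\fp_i^{m_i}\subseteq I_i$, and concludes $\nil_\infty(R)\subseteq\bigcap_i I_i=0$; in particular it shows the intersection $\bigcap_i\bigcap_n \fp_i^n$ over just the finitely many \emph{associated} primes already vanishes. You instead reduce to maximal ideals: for $x\neq 0$ you localize at a maximal ideal $\fm\supseteq\ann(x)$ (a necessary step, correctly handled --- localizing at an arbitrary maximal ideal could kill $x$) and invoke Krull's intersection theorem in the Noetherian local ring $R_\fm$. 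Your argument proves the slightly different refinement that $\bigcap_{\fm\ \mathrm{maximal}}\bigcap_n \fm^n=0$, and it has the virtue of explicitly producing the Artinian local ring $R_\fm/\fm^n R_\fm$ demanded by the characterization of $\nil_\infty(R)=0$ stated in the paper just before the proposition, so it ties the proposition directly to that remark. The two inputs are of comparable depth (Lasker--Noether primary decomposition versus Artin--Rees/Krull intersection), so neither route is materially shorter; they are simply different standard pillars of Noetherian theory.
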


\begin{proof}
Let $\{\fp_1,\ldots,\fp_r\}$ denote the set of associated primes of $R$.
By the primary decomposition theorem \cite[Thm. 6.8]{Matsumura:1986wj}, there exist primary ideals $I_1,\ldots,I_r$ with $\sqrt{I_i}=\fp_i$ such that $I_1\cap\cdots\cap I_r=0$.
As $\sqrt{I_i}=\fp_i$, for every $i$, there exists $m_i$ such that $\fp_i^{m_i} \subset I_i$. It follows that $\nil_\infty(R)\subset  I_1\cap\cdots\cap I_r$ and therefore $\nil_\infty(R)=0$. 
\end{proof}

Here is an example of commutative ring $R$ with $\nil_\infty(R)\neq 0$ deriving from our study of Weierstrass division theorem.
By Prop. \ref{infinite-radical-V-d}, $\nil_\infty(V_d)$ is a nonzero ideal generated by $x_0,x_1,\ldots$. Here is another example familiar in the $p$-adic Hodge theory. Let $\cO$ denote the ring of integers of $\C_p$ the $p$-adic completion of the algebraic closure of the field $\Q_p$ of $p$-adic numbers. Then $\cO$ is a local ring with maximal ideal $\fm$ satisfying $\fm=\fm^2$. In particular $\fm \subset \nil_\infty(\cO)$ and the latter is nonzero.

\begin{definition}
	A morphism of $k$-schemes $f:X\to Y$ is said to be a $\nil_\infty$-quasi-isomorphism if it induces a bijection $X(R)\to Y(R)$ on every $k$-algebra $R$ with $\nil_\infty(R)=0$.
\end{definition}

For instance, for every commutative ring, the closed embedding $\Spec(R/\nil_\infty(R))\to \Spec(R)$ is a $\nil_\infty$-quasi-isomorphism. In particular, the morphism $Q_d \to Z_d$ in Prop. \ref{infinite-radical-V-d} is a $\nil_\infty$-quasi-isomorphism. Here is a related and very instructive example of $\nil_\infty$-quasi-isomorphism which is not an isomorphism.

	Let $X=\Spec(k[x,y]/(xy))$ denote the union of the $x$-axis and $y$-axis in the plane of coordinates $(x,y)$. Let $X'=X-\{0\}=\Spec (k[x^{\pm 1}]) \sqcup \Spec (k[y^{\pm 1}])$ denote the complement of the origin in $X$. We consider the space $\cL^\bullet X$ of arcs on $X$ which are non-degenerate with respect to the open subset $X'$. The inclusion of the $x$-axis in $X$ induces a morphism $\cL^\bullet \bA^1 \to \cL^\bullet X$ and so does the $y$-asis. We have thus a morphism
	\begin{equation} \label{arc-on-x-axis-or-y-axis}
		\cL^\bullet \bA^1 \sqcup \cL^\bullet \bA^1 \to \cL^\bullet X
	\end{equation} 
	which induces a bijection on $k$-points. In fact in induces a bijection on $R$-points for all $k$-algebras $R$ with $\nil_\infty(R)=0$  by \ref{injective} but not in general by \ref{infinite-radical-V-d}.

\begin{lemma}\label{injective}
	Let $R$ be a commutative ring with $\nil_\infty(R)=0$, $x\in R[[t]]$ a non-degenerate formal series. Then the multiplication by $x$ in $R[[t]]$ is injective. 
\end{lemma}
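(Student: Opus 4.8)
The plan is to reduce the injectivity of multiplication by $x$ to the case of an Artinian local base ring, where Prop.~\ref{strict-Weierstrass-Artin} upgrades a non-degenerate series to an honest unit of the Laurent series ring, hence to a non-zero-divisor. Concretely, suppose $y\in R[[t]]$ satisfies $xy=0$; I want to conclude $y=0$, that is, that every coefficient $y_j$ of $y$ vanishes. First I would invoke the hypothesis $\nil_\infty(R)=0$ through the characterization recorded just after Def.~\ref{nil-infinity}: a nonzero element of $R$ is detected by some homomorphism $\nu\colon R\to A$ to an Artinian local ring. Thus it suffices to show $\nu(y_j)=0$ for every such $\nu$ and every $j$, equivalently that $\nu(y)=0$ in $A[[t]]$.

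The key point is then that $\nu(x)$ is a non-zero-divisor in $A[[t]]$. I would first record that non-degeneracy is preserved under base change: for any field-valued point $\mu\colon A\to K$, the composite $\mu\circ\nu\colon R\to K$ is a field-valued point of $R$, under which $x$ remains nonzero by Def.~\ref{non-degenerate-formal-series}; hence $\nu(x)\in\cL^\bullet\bA^1(A)$ is non-degenerate over $A$. Since $A$ is Artinian local, Prop.~\ref{strict-Weierstrass-Artin} gives $\nu(x)\in A[[t]]\cap A((t))^\times$, so $\nu(x)$ is invertible in $A((t))$ and therefore a non-zero-divisor in $A[[t]]\subset A((t))$. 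Applying $\nu$ to the relation $xy=0$ yields $\nu(x)\nu(y)=0$ in $A[[t]]$, whence $\nu(y)=0$ and in particular $\nu(y_j)=0$. As $\nu$ was arbitrary, each $y_j$ then lies in $\nil_\infty(R)=0$, so $y=0$.

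The only real subtlety is the choice of test rings, and that is the step I expect to be the crux. One is tempted to argue directly by reducing $xy=0$ modulo $\fp^n$ for each prime $\fp$ and filtering by powers of the nilpotent nilradical of $R/\fp^n$; but there the associated graded pieces are a priori torsion $R/\fp$-modules, and the leading coefficient of $x$ (nonzero, yet possibly a zero-divisor on such torsion) need not act injectively, so this route does not obviously produce a non-zero-divisor. Passing instead to Artinian local targets, where Prop.~\ref{strict-Weierstrass-Artin} supplies a genuine inverse of $\nu(x)$ in $A((t))$, sidesteps this difficulty entirely, and the remaining verifications—stability of non-degeneracy under $\nu$, and that a unit of $A((t))$ is a non-zero-divisor of $A[[t]]$—are routine.
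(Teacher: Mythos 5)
Your proof is correct and takes essentially the same route as the paper: reduce via the characterization of $\nil_\infty(R)=0$ to ring homomorphisms $\nu\colon R\to A$ into Artinian local rings, then use Prop.~\ref{strict-Weierstrass-Artin} to conclude that $\nu(x)\in A[[t]]\cap A((t))^\times$ is a non-zero-divisor in $A[[t]]$, so $\nu(y)=0$ and hence $y=0$. The only difference is cosmetic: you make explicit the base-change stability of non-degeneracy under $\nu$, which the paper leaves implicit.
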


\begin{proof}
	First we assume that $R$ is an Artinian local ring of maximal ideal $\fm$ and residue field $k_R$. In this case, the non-degeneracy condition means $\bar x\in k_R[[t]]$ is non zero in other words $\bar x\in k_R[[t]] \cap k_R((t))^\times$. It follows that $x\in R[[t]] \cap R((t))^\times$. Because the map $R[[t]] \to R((t))$ is injective, and the multiplication by $x$ is a bijection on $R((t))$, the multiplication by $x$ is injective on $R[[t]]$.
	
	Let $R$ be a ring with zero infinite radical and $x\in R[[t]]$ a non-degenerate formal series. Let $y\in R[[t]]$ be a formal series such that $xy=0$. Then for every ring homomorphism $\nu:R\to A$ from $R$ to Artinian local ring, we have $\nu(y)=0$. If $\nil_\infty(R)$ this implies $y=0$.
\end{proof}

We will use the Newton method to analyze the arc space of certain complete intersection following Drinfeld's paper \cite{Drinfeld:2002vda}. The difference with \cite{Drinfeld:2002vda} is that we will have to work over all $k$-algebras $R$ while Drinfeld restricts himself to Artinian rings. We will son realize that Drinfeld's arguments generalize well only under the assumption $\nil_\infty(R)=0$ for some familiar linear algebra facts only hold over rings with zero infinite radical.

\begin{lemma} \label{uniqueness}
Let $R$ be a commutative ring with $\nil_\infty(R)=0$. Let $\phi \in \Mat_n(R[[t]])$ a $n\times n$-matrix with coefficients in $R[[t]]$ whose determinant $\xi=\det(\phi)\in R[[t]]$ in a non-degenerate formal series as defined in \ref{non-degenerate-formal-series}. Then the associated linear operator  
$$\phi: R[[t]]^n \to R[[t]]^n$$
is injective, and its image consists of vectors $v\in R[[t]]^n$ such that
$$\phi' v \equiv 0 \mod \xi$$
where $\phi'$ is the adjugate matrix of $\phi$.
\end{lemma}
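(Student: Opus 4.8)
The plan is to exploit the fundamental adjugate identity $\phi'\phi=\phi\phi'=\xi I_n$, valid over any commutative ring and in particular over $R[[t]]$, together with the injectivity of multiplication by the non-degenerate series $\xi$ supplied by Lemma \ref{injective}. These two ingredients reduce the whole statement to a few short manipulations, and the only genuine content is the cancellation of the factor $\xi$, which is exactly where the hypothesis $\nil_\infty(R)=0$ enters.

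First I would prove injectivity of $\phi$. Suppose $w\in R[[t]]^n$ satisfies $\phi w=0$. Multiplying on the left by the adjugate $\phi'$ and using $\phi'\phi=\xi I_n$ gives $\xi w=0$, that is $\xi w_i=0$ in each coordinate. Since $\xi$ is non-degenerate and $\nil_\infty(R)=0$, Lemma \ref{injective} asserts that multiplication by $\xi$ is injective on $R[[t]]$, so each $w_i=0$ and hence $w=0$.

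Next I would establish the two inclusions that describe the image. For the easy direction, if $v=\phi w$ then $\phi' v=\phi'\phi w=\xi w$, which is divisible by $\xi$ componentwise, so $\phi' v\equiv 0 \pmod{\xi}$. For the reverse direction, suppose $\phi' v\equiv 0 \pmod{\xi}$; by definition this means there exists a vector $w\in R[[t]]^n$ with $\phi' v=\xi w$, and this $w$ is unique by the injectivity just noted. Applying $\phi$ and using $\phi\phi'=\xi I_n$ yields $\xi v=\phi\phi' v=\phi(\xi w)=\xi(\phi w)$, whence $\xi(v-\phi w)=0$. Invoking once more the injectivity of multiplication by $\xi$ gives $v=\phi w$, exhibiting $v$ in the image of $\phi$.

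The argument is thus essentially formal. The point I expect to keep careful track of is that each of the three conclusions---injectivity of $\phi$, the forward inclusion, and the backward inclusion---passes through a cancellation of the scalar factor $\xi$, and that each such cancellation is legitimate \emph{only} because of Lemma \ref{injective}; over a ring with $\nil_\infty(R)\neq 0$ one can have $\xi y=0$ with $y\neq 0$, and the characterization of the image would then break down. There is no serious obstacle beyond recording these cancellations and the adjugate identity correctly.
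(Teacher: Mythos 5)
Your proof is correct and follows the paper's own argument: both rest on the adjugate identity $\phi'\phi=\phi\phi'=\xi\,\id_n$ together with Lemma \ref{injective} to cancel the factor $\xi$, with your version merely spelling out the cancellations that the paper compresses into two sentences. Nothing to add.
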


\begin{proof}
	By the Cramer formula we have $\phi'\phi=\phi\phi'=\xi \id_n$
where $\id_n$ is the identity matrix. By Lem. \ref{injective} the multiplication by $\xi$ in $R[[t]]^n$ is injective. It follows that both linear operators $\phi$ and $\phi'$ are injective.  

For $u,v\in R[[t]]^n$, the injectivity of $\phi,\phi'$ and the Cramer rule together imply that $\phi(v)=u$ if and only if $\xi v=\phi'(u)$. In other words $u\in \im(\phi)$ if and only if $\phi'(u) \equiv 0 \mod \xi$.
\end{proof}

\section{Variations on the Newton method}

As in \cite{Drinfeld:2002vda}, we will assume that $X$ is defined as the central fiber of a morphism $p:\bA^m\to \bA^n$ with $m\geq n$. The coordinates of $f=(f_1,\ldots,f_n)$ are regular functions on $\bA^m$. We will use the coordinates $x=(x_1,\ldots,x_{m-n},y_1,\ldots,y_n)$ for $x\in\bA^n$. We will consider the $n\times n$-matrix
\begin{equation} \label{phi}
	B(x)= \left( {\partial f_i \over \partial y_j} \right)^{i=1,\ldots,n}_{j=1,\ldots,n}
\end{equation}
whose coefficients are regular functions on $\bA^n$. We will assume that
\begin{equation}
	\xi(x)=\det(B(x))
\end{equation}
is a non-zero regular function on $\bA^m$. Let $Z$ denote the closed subscheme of $\bA^m$ defined by the equation $\xi=0$ and $U$ its complement open subscheme. We will denote 
$$\cL^\bullet \bA^m=\cL \bA^m - \cL Z$$ 
the non-degenerate arc space of $\bA^m$ with respect to the open subset $U$.

The Taylor expansion of $f$ with respect to the variables $y_1,\ldots,y_n$ is of the form
\begin{equation} \label{Taylor}
	f(x+v)=f(x)+ B(x;v) + B_2(x;v)+B_3(x;v)+... 
\end{equation}
for $x=(x_1,\ldots,x_{m-n},y_1,\ldots,y_n)\in \bA^m$ and $v=(v_1,\ldots,v_n)\in \bA^n$ and 
\begin{equation}\label{addv}
x+v=(x_1,\ldots,x_{m-n},y_1+v_1,\ldots,y_n+v_n).
\end{equation}
Here $B(x;v)$ is a function linear with respect to the variable $v$ given by the matrix $B(x)$, and more generally $B_n(x;v)$ is a homogenous polynomial of degree $n$ of variables $v$ whose coefficients are polynomials of $x$.

We consider the Cartesian diagram
\begin{equation}\label{N-0}
\begin{tikzcd}
N_0 \arrow{r}{} \arrow{d}[swap]{\alpha_0}
&  \cL^\bullet \bA^m
 \times \cL \bA^n \arrow{d}{\alpha} \\
\cL^\bullet X \arrow{r}[swap]{}
& \cL \bA^m
\end{tikzcd}
\end{equation}
with $\alpha(x,v_0)=x+ t\xi(x)v_0$ where the addition is defined as in \eqref{addv}. For $\alpha$ is a surjective morphism of Weierstrass type, so is $\alpha_0$.
For every $k$-algebra $R$, $N_0(R)$ is the set of $(x,v_0)\in R[[t]]^m \times R[[t]]^n$ such that 
\begin{equation} \label{equation-N1}
	f(x+t\xi(x)v_0)=0.
\end{equation}
and $\xi(x)$ is a non-degenerate formal series.

We will replace $N_0$ by another infinite dimensional scheme defined by congruence equations. 
Let $N_1$ be the affine $k$-scheme that whose $R$-points are
\begin{equation} \label{equation-N}
N_1(R)=	\{(x,v_1) \in \cL^\bullet \bA^m(R) \times \cL \bA^n(R) \mid \hat B(x,f(x))=t\xi(x)^2 v_1\}
\end{equation}
As the equations $\hat B(x,f(x))=t\xi(x)^2 v_1$ defining $N_1$ are essentially equivalent to the congruence equation 
\begin{equation} \label{congruence}
		\hat B(x;f(x)) \equiv 0 \mod t\xi(x)^2 
\end{equation}
one may say that $N_1$ is defined in the arc space of the affine space of $\bA^m$ by finitely many congruence equations. 

\begin{proposition} \label{Newton}
	There exists a $\nil_\infty$-quasi-isomorphism $\nu_0: N_0\to N_1$. 
\end{proposition}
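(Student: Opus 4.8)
The plan is to write $\nu_0$ down by an explicit formula obtained by applying the adjugate of $B(x)$ to the Taylor expansion \eqref{Taylor}, and then to show this formula is bijective on $R$-points exactly when $\nil_\infty(R)=0$, the only non-formal input being the injectivity of the adjugate operator furnished by Lemma \ref{uniqueness}. Throughout, for $x\in\cL^\bullet\bA^m(R)$ I write $\xi=\xi(x)=\det B(x)$, a non-degenerate series, and let $\hat B(x)$ be the adjugate matrix of $B(x)$, so that Cramer's rule gives $\hat B(x)B(x)=\xi\,\id_n$ and $\hat B(x)f(x)$ is the vector $\hat B(x,f(x))$ appearing in \eqref{equation-N}.

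First I would record the basic computation. Writing $g(v_0)=f(x+t\xi v_0)$ and using that $B_k(x;\cdot)$ is homogeneous of degree $k$, the expansion \eqref{Taylor} gives
\begin{equation*}
g(v_0)=f(x)+t\xi\,B(x)v_0+\sum_{k\geq 2}t^k\xi^k B_k(x;v_0),
\end{equation*}
whence, applying $\hat B(x)$ and using $\hat B(x)B(x)=\xi\,\id_n$,
\begin{equation*}
\hat B(x)g(v_0)=\hat B(x)f(x)+t\xi^2\Big(v_0+\sum_{k\geq 2}t^{k-1}\xi^{k-2}\hat B(x)B_k(x;v_0)\Big).
\end{equation*}
Setting $v_1(x,v_0)=-v_0-\sum_{k\geq 2}t^{k-1}\xi^{k-2}\hat B(x)B_k(x;v_0)$ — a series converging $t$-adically since every summand has $t$-order at least one — this reads $\hat B(x)g(v_0)=\hat B(x)f(x)-t\xi^2 v_1(x,v_0)$. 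If $(x,v_0)\in N_0(R)$ then $g(v_0)=0$ by \eqref{equation-N1}, hence $\hat B(x)f(x)=t\xi^2 v_1(x,v_0)$ and $(x,v_1(x,v_0))\in N_1(R)$. This implication uses no hypothesis on $R$, so $(x,v_0)\mapsto(x,v_1(x,v_0))$ defines a morphism $\nu_0:N_0\to N_1$ over arbitrary $k$-algebras.

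Next I would prove bijectivity when $\nil_\infty(R)=0$. For fixed $x$ the map $v_0\mapsto v_1(x,v_0)$ has the shape $v_1=-v_0+t(\cdots)$, i.e. an invertible linear part perturbed by terms of positive $t$-order; solving for $v_0$ in terms of $v_1$ by the usual $t$-adic iteration shows it is a bijection of $R[[t]]^n$, which already yields injectivity of $\nu_0$ (recall $\nu_0$ is the identity on the $x$-coordinate). For surjectivity take $(x,v_1)\in N_1(R)$, so $\hat B(x)f(x)=t\xi^2 v_1$, and let $v_0$ be the unique solution of $v_1(x,v_0)=v_1$ produced by the same iteration. Then
\begin{equation*}
\hat B(x)g(v_0)=\hat B(x)f(x)-t\xi^2 v_1(x,v_0)=t\xi^2 v_1-t\xi^2 v_1=0.
\end{equation*}
Because $\xi$ is non-degenerate and $\nil_\infty(R)=0$, Lemma \ref{uniqueness} asserts that the adjugate operator $\hat B(x)$ is injective on $R[[t]]^n$; hence $g(v_0)=0$, that is $(x,v_0)\in N_0(R)$ with $\nu_0(x,v_0)=(x,v_1)$.

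The crux is precisely the step $\hat B(x)g(v_0)=0\Rightarrow g(v_0)=0$: this is where injectivity of the adjugate, and through it the hypothesis $\nil_\infty(R)=0$ (via Lemma \ref{uniqueness}, itself resting on Lemma \ref{injective}), is indispensable. Over a general ring the adjugate may have a kernel, so a point of $N_1$ satisfying the congruence $\hat B(x)f(x)\equiv 0\bmod t\xi^2$ need not come from an honest solution of $f(x+t\xi v_0)=0$; this is exactly why $\nu_0$ is only a $\nil_\infty$-quasi-isomorphism and not an isomorphism. Everything else — the Taylor bookkeeping and the $t$-adic inversion of $v_0\leftrightarrow v_1$ — is formal and valid over any $R$, so no genuine Newton iteration beyond this single contraction is needed.
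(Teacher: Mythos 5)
Your proof is correct and follows essentially the same route as the paper: Taylor-expand $f(x+t\xi v_0)$, apply the adjugate $\hat B(x)$ to produce the map $(x,v_0)\mapsto(x,v_1)$, invert $v_0\leftrightarrow v_1$ by $t$-adic contraction (the paper's Lemma \ref{fixed-point}), and invoke Lemma \ref{uniqueness} to pass from $\hat B(x)g(v_0)=0$ to $g(v_0)=0$ when $\nil_\infty(R)=0$. The only differences are cosmetic: you fix a sign so the formula literally matches the definition \eqref{equation-N} of $N_1$, and you inline the fixed-point argument instead of citing it.
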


\begin{proof}

We consider the Taylor expansion \eqref{Taylor} of $f(x+ t\xi(x)v_0)$ to rewrite the equation \eqref{equation-N1} in the form
\begin{equation} \label{Taylor-qt}
	f(x)+ t\xi(x) B(x;v_0) + t^2\xi(x)^2 H(x;v_0)=0.
\end{equation} 
By applying the linear form $\hat B(x)$, we get the equation
\begin{equation} \label{B-Taylor}
	\hat B(x;f(x))+ t\xi(x)^2  v_0 + t^2 \xi(x)^2 \hat B(x;H(x;v_0))=0
\end{equation}
where $\hat B$ is the adjugate matrix of $B$. We then derive the equation
\begin{equation}
	\hat B(x;f(x)) + t\xi^2(x) v_1=0
\end{equation} 
with $v_1= v_0+t\hat B(x;H(x;v_0))$. Our desired morphism $N_0\to N_1$ is given by the formula $(x,v_0) \mapsto (x,v_0+t\hat B(x;H(x;v_0))$.

We will now prove that the morphism $N_0\to N_1$ constructed above induces a bijection $N_0(R)\to N_1(R)$ for every $k$-algebra with zero infinite radical. For an arbitrary $k$-algebra $R$, and an arbitrary vector $v_1\in R[[t]]^n$, there exists a unique $v_0\in R[[t]]^n$ such that $v_1= v_0+t\hat B(x;H(x;v_0))$ by the following lemma \ref{fixed-point}. Since $v_1\in N_1(R)$ the relation \eqref{B-Taylor} holds. We derives \eqref{Taylor-qt} from Prop. \ref{uniqueness} under the assumption $\nil_\infty(R)=0$.
\end{proof}

\begin{lemma} \label{fixed-point}
Let $h:\bA^n\to \bA^n$ an arbitrary polynomial self-map. Then for every commutative ring $R$ and an arbitrary, the map $R[[t]]^n\to R[[t]]^n$ given by $v_0\mapsto v_0+th(v_0)$ is bijective. 
\end{lemma}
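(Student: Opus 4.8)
The plan is to solve the equation $w = v_0 + t\,h(v_0)$ for $v_0 \in R[[t]]^n$ in terms of a prescribed target $w \in R[[t]]^n$, by $t$-adic successive approximation; this yields injectivity and surjectivity simultaneously. Write $v_0 = \sum_{k\ge 0} v_0^{(k)} t^k$ and $w = \sum_{k\ge 0} w^{(k)} t^k$ with $v_0^{(k)}, w^{(k)} \in R^n$. Since $t\,h(v_0)$ has no constant term, reduction modulo $t$ shows that the map is the identity on constant terms, so comparing constant terms forces $v_0^{(0)} = w^{(0)}$; the constant term of the source is thus pinned down by the target.

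The key point I would then record is a triangularity property of $h$ with respect to the $t$-adic filtration. For each $j$, the coefficient of $t^j$ in $h(v_0) \in R[[t]]^n$ is a polynomial expression in the coefficients $v_0^{(0)},\ldots,v_0^{(j)}$ only. This is immediate by expanding each monomial of $h$, say $\prod_\ell (y)_{i_\ell}$, into $\prod_\ell (v_0)_{i_\ell}$ and collecting the $t^j$-term as $\sum_{j_1+\cdots+j_d = j}\prod_\ell \bigl(v_0^{(j_\ell)}\bigr)_{i_\ell}$, in which every index satisfies $j_\ell \le j$. Consequently the coefficient of $t^k$ in the perturbation $t\,h(v_0)$, being the coefficient of $t^{k-1}$ in $h(v_0)$, depends only on $v_0^{(0)},\ldots,v_0^{(k-1)}$.

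Comparing the coefficient of $t^k$ on both sides of $w = v_0 + t\,h(v_0)$ then gives a relation of the form
\[
w^{(k)} = v_0^{(k)} + P_k\bigl(v_0^{(0)},\ldots,v_0^{(k-1)}\bigr),
\]
for suitable polynomials $P_k$ (with $P_0 = 0$). By induction on $k$, given $w$ there is a unique sequence $\bigl(v_0^{(k)}\bigr)_{k\ge 0}$ satisfying all these relations, produced by the explicit recursion $v_0^{(k)} = w^{(k)} - P_k\bigl(v_0^{(0)},\ldots,v_0^{(k-1)}\bigr)$. Existence of the sequence gives surjectivity and its uniqueness gives injectivity, so $v_0 \mapsto v_0 + t\,h(v_0)$ is a bijection of $R[[t]]^n$.

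I expect no serious obstacle: the content is entirely formal. The only step needing care is the triangularity claim, namely that the factor of $t$ in front of $h$, combined with $h$ being a polynomial map (so each coefficient of $h(v_0)$ is a finite polynomial in the $v_0^{(j)}$), forces the $t^k$-coefficient of the image to be $v_0^{(k)}$ plus a correction involving only strictly lower coefficients. This is exactly what makes the recursion well-posed. Notably, no hypothesis on $R$ such as $\nil_\infty(R)=0$ enters, consistent with the lemma being asserted for every commutative ring.
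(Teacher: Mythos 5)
Your proof is correct, and it takes a slightly different route from the paper's. Where you compare coefficients and solve the triangular system $w^{(k)} = v_0^{(k)} + P_k\bigl(v_0^{(0)},\ldots,v_0^{(k-1)}\bigr)$ by induction on $k$, the paper packages the same $t$-adic phenomenon as a metric fixed-point argument: it equips $R[[t]]^n$ with the ultrametric norm $|v|=2^{-d}$ ($d$ the order of vanishing), observes that $R[[t]]^n$ is complete, shows that $\phi(v)=v_1-t\,h(v)$ satisfies $|\phi(v)-\phi(w)|\le |v-w|/2$ because $h$ is polynomial, and invokes the Banach fixed point theorem to get a unique fixed point, i.e.\ a unique solution of $v_1=v_0+t\,h(v_0)$. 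The two arguments rest on the identical facts — $t$-adic completeness plus the factor of $t$ making the perturbation strictly decrease the filtration — and indeed your coefficient recursion is essentially the successive-approximation scheme hiding inside the proof of Banach's theorem, unrolled degree by degree. What your version buys is that it is entirely self-contained and makes the well-posedness of the recursion (your triangularity claim, correctly justified by expanding monomials) explicit, with no need to set up a metric; what the paper's version buys is brevity and a conceptual framing that fits the Newton-method theme of that section, where iteration toward a fixed point is the guiding picture. Your closing remark that no hypothesis such as $\nil_\infty(R)=0$ is needed matches the paper: the lemma holds for every commutative ring $R$, and it is only in Proposition \ref{Newton}, where the lemma is applied, that the infinite nilradical hypothesis enters.
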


\begin{proof}
Let $v_1\in R[[t]]^n$. We will prove that there exists a unique $v_0$ such that $v_1=v_0+th(v_0)$. It is equivalent to say that the self-map $\phi(v)=v_1-th(v)$ of $R[[t]]^n$ admits a unique fixed point. This is just an application of the Banach fixed point theorem for contraction map.

Let us consider the norm on $R[[t]]^n$ defined as follows. For $v=\alpha_0+\alpha_1t+\alpha_2t^2+\cdots \in R[[t]]^n$ with $\alpha_i\in R^n$, we set $|v|=2^{-d}$ where $d$ is the smallest integer such that $\alpha_d\neq 0$. By construction of $R[[t]]$ as a projective limit of $R[[t]]/(t^d)$, $R[[t]]^n$ is complete metric space with respect to this norm.

Now for $h$ is polynomial, we have $|h(v)-h(w)| \leq  |v-w|$. As $\phi(v)=v_1-th(v)$, we have
$|\phi(v)-\phi(w)| \leq |v-w|/2$. In other words, the self-map $v\mapsto \phi(v)$ is contraction map with contraction ratio $\leq 1/2$. It therefore admits a unique fixed point by the Banach fixed point theorem.
\end{proof}

\section{Local structure of arc spaces}

We will simplify notations in setting $c(x)=\xi^2(x)$ and $a(x)=\hat B(x,f(x))$ in the definition of $N_1$:
\begin{equation} \label{equation-N}
N_1(R)=	\{(x,v) \in \cL^\bullet \bA^m(R) \times \cL \bA^n(R) \mid a(x)=tc(x) v\}.
\end{equation}
By definition $c$ is a polynomial function on $\bA^m$ and $a$ is a polynomial vector $a:\bA^m\to \bA^n$. 

In the goal of this section is to prove that after base change by a morphism of Weierstrass type $N\to N_1$, there exists an essentially smooth morphism $N\to Y$ where $Y$ is a scheme of finite type over the $n$-fold Cartesian product of $S_d$ over $Q_d$
\begin{equation} \label{S-d-n}
	S_d^n=S_d \times_{Q_d} S_d \times_{Q_d} \cdots \times_{Q_d} S_d
\end{equation}
where $S_d$ is the "strange" vector bundle over $Q_d$ defined in \eqref{S-d}. Recall that as $d$ varies $S_d$ form family of finite dimensional schemes, which are not of finite type, in which the first member $S_1$ is the "sawed plane". 

\begin{proposition} There exists a cover of $N_1$ by morphisms of Weiersstrass type
$N_{1,d} \to N_1$ such that for every $d$, there exists an isomorphism 
$$N_{1,d} = T_d\times \bA^\infty$$ 
where $T_d$ is a scheme of finite type over $S_{d+1}^{n+1}$.	
\end{proposition}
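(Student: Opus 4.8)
The single equation $a(x)=tc(x)v$ is a divisibility condition: it asks that each component $a_i(x)$ be divisible by the non-degenerate series $tc(x)=t\xi(x)^2$. The plan is to stratify $N_1$ by the order of $c(x)=\xi(x)^2$ at $t=0$, on each stratum force a strict Weierstrass factorization of $tc(x)$ by a Weierstrass-type base change, and then convert the divisibility condition into Weierstrass-division data living in the strange bundles $S_{d+1}$, the infinite free directions collapsing into the $\bA^\infty$ factor.

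\textbf{Construction of the cover.} The regular function $\xi$ on $\bA^m$ induces on non-degenerate arc spaces a morphism $N_1\to\cL^\bullet\bA^1$, $x\mapsto tc(x)$, whose value is non-degenerate because $x$ is a non-degenerate arc with respect to $U=\{\xi\neq 0\}$. For each $d\in\N$ I would set $N_{1,d}=N_1\times_{\cL^\bullet\bA^1}(Q_{d+1}\times\cL\Gm)$, the fibre product taken along $\beta_{d+1}$ of \eqref{beta-d} and restricted to the locus where $\ord(tc(x))=d+1$, i.e. where $d=\ord(c(x))$. Since $\beta_{d+1}$ is of Weierstrass type by Prop.~\ref{beta-formal-smooth} and Prop.~\ref{criterion-Weierstrass}, and Weierstrass type is stable under base change, each $N_{1,d}\to N_1$ is of Weierstrass type; and since every $k$-point has $c(x)$ of some finite order, the $N_{1,d}$ jointly cover $N_1$. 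By construction, on $N_{1,d}$ the series $tc(x)$ has a strict Weierstrass factorization $tc(x)=u_c\,q$ with $u_c\in\cL\Gm$ and $q\in Q_{d+1}$ monic of degree $d+1$ dividing some $t^N$.

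\textbf{Reduction to $S_{d+1}^{\,n+1}$.} Absorbing the unit by the isomorphism $v_i\mapsto w_i=u_c v_i$ (its non-leading coefficients becoming free coordinates), the relation $a_i(x)=tc(x)v_i$ becomes $a_i(x)=q\,w_i$ for $i=1,\dots,n$. Using the Weierstrass division morphism $\alpha_{d+1}$ of \eqref{alpha-d}, which by Prop.~\ref{alpha-smoothness} is surjective, of Weierstrass type, and an isomorphism on formal completions over $q=t^{d+1}$, I write $a_i(x)=q H_i+r_i$ with quotient $H_i\in\cL\bA^1$ and remainder $r_i\in A_{d+1}$; then $q(w_i-H_i)+(-r_i)=0$, so $(q,-r_i,w_i-H_i)$ is a point of $S_{d+1}$ as defined in \eqref{S-d}. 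Recording these $n$ triples together with the analogous datum attached to $\xi(x)$ against the same $q$ (needed to pin down $q$ and to reconstruct the arc $x$ from $a(x)$ and $\xi(x)$), all sharing the common $q\in Q_{d+1}$, yields a morphism $N_{1,d}\to S_{d+1}^{\,n+1}$ into the fibre product \eqref{S-d-n}.

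\textbf{The finite-type core and the main obstacle.} It remains to reorganize the surviving coordinates into $T_d\times\bA^\infty$ with $T_d$ of finite type over $S_{d+1}^{\,n+1}$. The mechanism is that Weierstrass division by $q$ splits each series datum into a finite-dimensional remainder of degree $<d+1$—feeding the $S_{d+1}$-factors and the base $T_d$—and an infinite free quotient; the horizontal coordinates $x_1,\dots,x_{m-n}$, the non-leading coefficients of $u_c$, and the quotients $H_i$ then all assemble into $\bA^\infty$, leaving only finitely many low-order coordinates of the vertical variables $y_1,\dots,y_n$ genuinely constrained, which fiber over $S_{d+1}^{\,n+1}$ to give the finite-type $T_d$. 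The delicate point, and the main obstacle, is to prove that exactly these finitely many coordinates survive and that the tail splits off cleanly as a free affine space: this rests on the formal smoothness and the ``$q=t^{d+1}$'' isomorphism of Prop.~\ref{alpha-smoothness}, on the base-change compatibility of $R[[t]]/(q)$ from Cor.~\ref{base-change} and Prop.~\ref{algebra}, and on the fact that the whole reorganization is valid only after a Weierstrass-type base change rather than an honest open immersion—the obstruction to a genuine scheme-theoretic splitting being precisely the infinite-nilradical pathology of the schemes $S_d$ and $Z_d$ recorded in \eqref{S-d} and Prop.~\ref{infinite-radical-V-d}.
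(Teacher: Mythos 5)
Your construction has a genuine gap, and it sits exactly where you yourself place ``the main obstacle'': the splitting $N_{1,d}=T_d\times\bA^\infty$ is not a technical point to be deferred, it is the content of the proposition, and your cover simply does not carry enough data to produce it. The paper's cover is not the base change of $N_1\to\cL^\bullet\bA^1$, $x\mapsto tc(x)$, along $\beta_{d+1}$ alone; it is the base change of the graph morphism $\gamma(x,v)=(c(x),x)$ along the product morphism $(\beta,\alpha)(q,u,\bar x,x')=(uq,\,\bar x+tq\,x')$. The second factor adjoins a presentation of the arc $x$ itself, $x=\bar x+tq\,x'$ with $\bar x\in A_{d+1}^m$ and $x'\in\cL\bA^m$, and this is precisely what your cover lacks. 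Its absence makes your next step illegitimate: ``I write $a_i(x)=qH_i+r_i$'' is not an operation that exists functorially in the test ring $R$. Prop.~\ref{alpha-smoothness} gives surjectivity of $\alpha_{d+1}$ on $k$-points and formal smoothness on formal completions, not surjectivity on $R$-points; the failure of Weierstrass division of an arbitrary formal series by a monic $q$ over a general $R$ is exactly the pathology that the schemes $S_d$ and $Z_d$ of \eqref{S-d} encode, so you cannot divide the universal $a_i(x)$ by $q$. By contrast, once $x=\bar x+tq\,x'$ is part of the data, the expansion $a(\bar x+tq\,x')=\bar a(q,\bar x)+tq\,a'(q,\bar x,x')$ is a universal polynomial identity (Taylor expansion plus Euclidean division of polynomials, via the functor $X_{Q_{d+1}}$): no division of formal series is ever performed. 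The defining equations of $N_{1,d}$ then say literally that $\bigl(tq,(\bar a(q,\bar x),\bar c(q,\bar x)-u_0q),(a'-uv,\,c'-u')\bigr)$ is a point of $S_{d+1}^{n+1}$, and the factor $\bA^\infty$ is exactly the free coordinate $x'$, because $v$ and $u'$ are solved for explicitly: $v=u^{-1}(a'-w_n)$, $u'=c'-w_1$. In your setup there is no analogous free coordinate: even if you repaired the division by a further base change along $\alpha_{d+1}$, the quotients $H_i$ are constrained by the equations, not free, so they cannot ``assemble into $\bA^\infty$''; and indeed you concede that this splitting is unproven.

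Two further errors, less central but worth correcting. First, base change along $\beta_{d+1}$ yields a Weierstrass factorization of $tc(x)$, not a strict one: strictness is governed by the morphism $\gamma$ of \eqref{gamma}, whose source $\cL^{\beta+}\bA^1$ is an ind-scheme and not schematic (Prop.~\ref{ind-scheme}), so it cannot be used for a schematic base change, and restricting to the locus $\ord(tc(x))=d+1$ does not rescue this (strictness there would still need nilpotence hypotheses as in Prop.~\ref{criterion}). Second, that restriction is both unnecessary and harmful: the locus is only locally closed, a (locally) closed immersion is not a limit of smooth morphisms nor formally smooth on completions, so the restricted map would no longer be of Weierstrass type; meanwhile the unrestricted base changes along $\beta_{d+1}$ already cover $N_1$ as $d\to\infty$. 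Finally, your description of $T_d$ is off: in the paper $T_d$ is the fiber product of $\Gm\times Q_d\times A_{d+1}^m$ with $S_{d+1}^{n+1}$ over $Q_{d+1}\times A_{d+1}^{n+1}$, so the finite-type coordinates are $u_0$, $q$ and the truncation $\bar x$ of \emph{all} $m$ coordinates of the arc, not just finitely many coefficients of the vertical variables $y_1,\ldots,y_n$.
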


\begin{proof}

For every $d\in\N$, we consider the Cartesian diagram
\begin{equation}
\begin{tikzcd}
N_{1,d} \arrow{r}{\xi} \arrow{d}[swap]{\alpha_1}
&  Q_d \times \cL \Gm \times A_{d+1}^m\times \cL\bA^m
 \arrow{d}{(\beta,\alpha)} \\
N_1 \arrow{r}[swap]{\gamma}
& \cL \bA \times \cL \bA^m 
\end{tikzcd}
\end{equation}
where $\gamma$ is essentially the graph of $c$
$$\gamma(x,v)=(c(x),x)$$
and $(\beta,\alpha)$ is the morphism of Weierstrass type
\begin{equation}
	(\beta,\alpha)(q,u,\bar x,x') = (uq,\bar x+ tq x').
\end{equation}
By base change, $N_{1,d}\to N_1$ is a morphism of Weierstrass type.

Let us denote $M$ the space of quintuples $(q,u,\bar x,x',v)$:
$$M=Q_d \times \cL \Gm \times A_{d+1}^m\times \cL\bA^m \times \cL \bA^n$$
By construction $N_{1,d}$ is the closed subscheme of $M$ consisting of quintuples
\begin{equation} \label{quintuple}
	(q,u,\bar x,x',v)\in M
\end{equation}
satisfying the equations:
\begin{eqnarray}
	a(\bar x+ tq x') & = & tuqv \\
	c(\bar x+ tq x') & = & uq
\end{eqnarray}
The idea is now to expand $a(\bar x+ tq x')$ and $c(\bar x+ tq x')$ in the form
\begin{eqnarray} 
	\label{expansion-a}
	a(\bar x+tq x') &=& \bar a(q,\bar x)+ tq a'(q,\bar x,x')\\ 
	\label{expansion-c}
	c(\bar x+tq x') &=& \bar c(q,\bar x)+tq c'(q,\bar x,x')
\end{eqnarray}
where $\bar a:Q_d\times A_{d+1}^m \to Q_d \times A_{d+1}^n$ and 
$\bar c: Q_d\times \A_{d+1}^m \to Q_d \times A_{d+1}$ are deduced from $a$ and $c$ by the general procedure \ref{multijet}, $a'(q,\bar x,x')\in \cL \bA^n$ and $c'(q,\bar x,x')\in \cL \bA^1$ are respectively a $n$-vector of formal series and a single formal series whose coefficients are polynomial functions of coefficients of $q,\bar x,x'$. 

We will now construct the morphisms $\bar a:Q_d\times A_{d+1}^m \to Q_d \times A_{d+1}^n$ and $\bar c: Q_d\times \A_{d+1}^m \to Q_d \times A_{d+1}$ and the expansions \eqref{expansion-a} and \eqref{expansion-c}. 

Let $X$ be an affine $k$-scheme of finite type. The functor $X_{Q_d}$ on the category of $k$-algebras 
\begin{equation}
	X_{Q_d}(R)=\{(q,x)\mid q\in Q_d(R), x:\Spec(R[t]/q)\to X\}.
\end{equation} 
is representable by a flat affine $Q_d$-scheme $X_{Q_d}$. The fiber of $X_{Q_d}$ over separable polynomial $q\in Q_d(k)$ is isomorphic to $X^d$. The fiber of $X_{Q_d}$ over the polynomial $q=t^d$ is isomorphic to the jet space $\cL_{d-1}X$ of $X$. 
If $X=\bA^1$ is the affine line, we have
\begin{equation}
	\bA^1_{Q_d}(R)=\{(q,x)\mid q\in Q_d(R), x \in R[t]/q)\}
\end{equation} 
which is a vector bundle of rank $r$ over $Q_d$. As the canonical map $A_d(R) \to R[t]/q$ from the space $A_d(R)$ of polynomials of degree less than $d$ to $R[t]/q$ is an isomorphism by Euclidian division theorem, the latter vector bundle can be canonically trivialized
\begin{equation}\label{A1_Qd}
	Q_d\times A_d = \bA^1_{Q_d}.
\end{equation}

For every morphism $f:X\to Y$, we have an induced morphism 
\begin{equation}\label{f_Qd}
	f_{Q_d}:X_{Q_d} \to Y_{Q_d}
\end{equation}
Now the morphism $a:\bA^m\to \bA^n$ induces a morphism $a_{Q_d}:\bA^m_{Q_d} \to \bA^n_{Q_d}$
By identifying $\bA^m_{Q_d}=Q_d\times A_d^m$ and $\bA^n_{Q_d}=Q_d\times A_d^n$ as in \eqref{A1_Qd}, we get the desired morphism 
\begin{equation}
	\bar a:Q_d\times A_d^m \to Q_d\times A_d^n.
\end{equation}
Similarly, we have a morphism 
\begin{equation}
	\bar c:Q_d\times A_d^m \to Q_d\times A_d.
\end{equation}
induced by $c$.

Now $N$ can is the space of quintuples $(q,u,\bar x,x',v)$ as in \eqref{quintuple} satisfying two equations 
\begin{eqnarray}
	\bar a(q,\bar x)+ tq (a'(q,\bar x,x')-uv) & = &0 \\
	(\bar c(q,\bar x)-u_0q)+tq (c'(q,\bar x,x')-u') & = &0
\end{eqnarray}
where $u_0$ is the constant coefficient of $u$ and $u=u_0 + tu'$ where $u'$ is a formal series. 

We "solve" this system of equations by the following diagram in which all square are Cartesian:
\begin{equation*}
\begin{tikzcd}
N_{1,d} \arrow{r}{g_N} \arrow{d}
& T_d  \arrow{r} \arrow{d}
&  S_{d+1}^{n+1} \arrow{r} \arrow{d}
& Q_{d+1} \arrow{d}{0}
\\
M \arrow{r}[swap]{g_M}
& \Gm \times Q_d \times A_{d+1}^m  \times \cL\bA^{n+1} \arrow{r} \arrow{d}
& Q_{d+1}\times A_{d+1}^{n+1} \times \cL\bA^{n+1}  \arrow{r}[swap]{\alpha_{d+1}} \arrow{d}
& Q_{d+1} \times \cL \bA^{n+1}\\
& \Gm \times Q_d\times A_{d+1}^m \arrow{r}[swap]{\psi} 
& Q_{d+1} \times  A_{d+1}^{n+1}
\end{tikzcd}
\end{equation*}
We will describe the arrows in this diagram from right to left, and then move bottom up in each column
\begin{itemize}
	\item the morphism $0:Q_{d+1}\to Q_{d+1}\times \cL \bA^{n+1}$ is the zero section $q_{d+1}\mapsto (q_{d+1},0)$;
	\item $\alpha_{d+1}$ is given by the formula \eqref{alpha-d};
	\item the upper right square is Cartesian by the very definition of $S_{d+1,n+1}$;
	
	\item the morphism $\psi$ on the third line is given by $\psi(u_0,q,\bar x)=(tq,(\bar a(q,\bar x), \bar c(q,\bar x)-u_0q))$;
	\item the two down arrows from the second line to the third are obvious projections, the lower square between those lines is obviously Cartesian;
	\item $T_d$ is defined so that the upper middle square is Cartesian;
		
	\item the morphism $g_M:M\to \Gm \times Q_d \times A_{d+1}^m  \times \cL\bA^{n+1}$ maps a point $(q,u,\bar x,x',v)\in M$ with $u=u_0+tu'$
	on the point 
	$$g_M(q,u,\bar x,x',v)=(u_0,q,\bar x,(a'(q,\bar x,x')-uv,c'(q,\bar x,x')-u')).$$
\item as by definition $N=M\times_{Q_{d+1}\times\cL\bA^{n+1}} Q_{d+1}$, there exists a unique morphism $g_N:N\to S$ making the upper left square Cartesian. 
\end{itemize}

By construction, $T_d$ is a scheme of finite type over $S_{d+1}^{n+1}$ as $\psi$ is a morphism between $k$-schemes of finite type. Since the upper left corner is Cartesian, in order to prove that there exists an isomorphism $N_{1,d}=T_d\times \bA^\infty$, it is enough to prove that there exists an isomorphism 
\begin{equation} \label{iso-M}
M=(\Gm \times Q_d \times A_{d+1}^m  \times \cL\bA^{n+1})\times \bA^\infty	
\end{equation}
compatible with the projection to $\Gm \times Q_d \times A_{d+1}^m  \times \cL\bA^{n+1}$.

Given a point
$$(u_0,q,(tq,\bar x),w)\in (\Gm\times Q_d\times_{Q_{d+1}} \bA^m_{Q_{d+1}} \times \cL\bA^{n+1})(R)$$
we need to solve the equation 
$$g_M(q,u,\bar x,x',v)=(u_0,q,(tq,\bar x),w).$$
Let us write the vector $w\in \cL\bA^{n+1}(R)$ as $(w_n,w_1)$ with $w_n \in R[[t]]^n$ and $w_1\in R[[t]]$ and $u=u_0 + u't$ where $u_0\in R^\times$ and $u'\in R[[t]]$. We need to determine the set of 
$$(u',x',v)\in R[[t]]\times R[[t]]^m \times R[[t]]^n$$ 
such that 
\begin{eqnarray*}
a'(q,\bar x,x')-uv &=& w_n \\
	c'(q,\bar x,x')-u' &=& w_1 
\end{eqnarray*}
The set of solutions is uniquely parametrized by $x'\in R[[t]]^m$ because the above system of equations is equivalent to
\begin{eqnarray*}
	v & = & u^{-1}(a'(q,\bar x,x') -w_n) \\
	u' &=& c'(q,\bar x,x') -w_1.
\end{eqnarray*}
These equations yield the desired isomorphism \eqref{iso-M}.

\end{proof}

As consequence, we obtain the following diagram describing local structures of arc spaces
\begin{equation}
\begin{tikzcd}
N_{1,d}=T_d\times \bA^\infty \arrow{r}{\alpha_1} \arrow{d}[swap]{\pr_1}
&  N_1  & N_0 \arrow{l}[swap]{\nu_0} \arrow{r}{\alpha_0} & \cL X \\
T_d \arrow{r}{\gamma} & S_{d+1}^{n+1}
\end{tikzcd}
\end{equation}
where $\nu_0$ is a $\nil_\infty$-quasi-isomorphism, $\alpha_0,\alpha_1$ are morphisms of Weierstrass type, and $\gamma$ is  a morphism of finite type. We note that the positions of $\nu_0$ and $\gamma_1$ may be exchanged so that $N_{1,d}\to \cL X$ is the composition of a $\nil_\infty$-quasi-isomorphism with a morphism of Weierstrass type. On the lower line, $T_d$ is of of finite type over a the finite-dimensional scheme $S_{d+1}^{n+1}$ which is of Weierstrass type but not of finite type. It doesn't seem possible to invert orders i.e. $T_d$ is likely not being of Weierstrass type over a scheme of finite type.
 
Although, the local structure of arc spaces as described by this diagram is far from being pleasant, we can derive the existence of good slices.

\section{Construction of good slices}

We keep the notations in force in the previous section. In particular, we have a isomorphism $N_{1,d}\simeq T_d\times \bA^\infty$ where $T_d$ is a $S_{d+1}^{n+1}$-scheme of finite type. Recall that $S_{d+1}^{n+1}$, finite-dimensional but not of finite type, is a strange vector bundle over $Q_{d+1}$. We consider the subscheme $Y_d$ of $T_d$ constructed by the Cartesian diagram:
\begin{equation} \label{Y-d}
\begin{tikzcd}
Y_d \arrow{r}{} \arrow{d}[swap]{}
&  Q_{d+1}  \arrow{d}{0} \\
T_d \arrow{r}[swap]{}
& S_{d+1}^{n+1} 
\end{tikzcd}
\end{equation}
where the right vertical map is the zero section. By base change, the morphism $Y_d\to Q_{d+1}$ is of finite type and therefore $Y_d$ is a $k$-scheme affine of finite type. Let $Y_d=\Spec(R_d)$ where $R_d$ is a $k$-algebra of finite type.

For every $R_d$-point $r\in \bA^\infty(R_d)$, we consider the composed morphism 
$$\phi_1(r): Y_d \to T_d \to T_d\times \bA^\infty \simeq N_{1,d} \to N_1$$
defines a $R_d$-point of $N_1$. Since the morphism $N_0\to N_1$ is a $\nil_\infty$-quasi-isomorphism after Prop. \ref{Newton}, the induced map on $R_d$-points $N_0(R_d)\to N_1(R_d)$ is a bijection. We can thus lift $f_1(r):Y_d\to N_1$ to a morphism
$$\phi_0(r):Y_d\to N_0$$
and thus a morphism 
$$\phi(r):Y_d \to \cL X.$$

\begin{proposition}
	For every $r\in \bA^\infty(R_d)$, for every $k$-field $k'$, $y'\in Y(k')$ mapping to $x'\in \cL X(k')$, there exists an isomorphism
	$$\hat Y_{y'}\hat\times \hat D^\infty \simeq \cL X^\ehat_{x'} \hat\times \hat D^\infty.$$
\end{proposition}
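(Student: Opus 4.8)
The plan is to obtain the isomorphism by transporting formal completions along the zig-zag
$$Y_d \hookrightarrow N_{1,d} \xrightarrow{\alpha_1} N_1 \xleftarrow{\nu_0} N_0 \xrightarrow{\alpha_0} \cL X,$$
bookkeeping the auxiliary $\hat D^\infty$-factors contributed at each stage. First I would fix $y'\in Y_d(k')$ and transport it: the tautological section attached to $r$ sends $y'$ to $z'=(y',r(y'))\in N_{1,d}(k')$, then $\alpha_1$ sends $z'$ to $w'\in N_1(k')$, the inverse of $\nu_0$ sends $w'$ to $x_0'\in N_0(k')$, and $\alpha_0$ sends $x_0'$ to $x'=\phi(r)(y')\in\cL X(k')$. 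The use of $\nu_0^{-1}$ is legitimate because $k'$ is a field, so $\nil_\infty(k')=0$ and $\nu_0$ is bijective on $k'$-points by Prop. \ref{Newton}.

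The backbone is then four statements about completions at these points, to be spliced together using $\hat D^\infty\hat\times\hat D^\infty\simeq\hat D^\infty$ and $\hat D^\infty\hat\times\hat D^r\simeq\hat D^\infty$. (i) After base change to $k'$ the Weierstrass-type morphism $\alpha_0$ induces a formally smooth morphism on completions; since its source carries countably many free formal directions over $\cL X$, this smoothness splits off a factor $\hat D^\infty$ and yields $\hat{(N_0)}_{x_0'}\simeq\cL X^\ehat_{x'}\hat\times\hat D^\infty$. (ii) Formal completions are determined by their points over Artinian local $k'$-algebras $A$, and such $A$ satisfy $\nil_\infty(A)=0$; hence $\nu_0$ is bijective on $A$-points and induces $\hat{(N_0)}_{x_0'}\simeq\hat{(N_1)}_{w'}$. (iii) The same argument applied to the Weierstrass-type morphism $\alpha_1$ gives $\hat{(N_{1,d})}_{z'}\simeq\hat{(N_1)}_{w'}\hat\times\hat D^\infty$. (iv) The decomposition $N_{1,d}\simeq T_d\times\bA^\infty$, together with the fact that the completion of $\bA^\infty$ at any $k'$-point is $\hat D^\infty$, gives $\hat{(N_{1,d})}_{z'}\simeq\hat{(T_d)}_{y'}\hat\times\hat D^\infty$. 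Chaining (i)--(iv) produces $\hat{(T_d)}_{y'}\hat\times\hat D^\infty\simeq\cL X^\ehat_{x'}\hat\times\hat D^\infty$.

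It remains to trade $\hat{(T_d)}_{y'}$ for $\hat{(Y_d)}_{y'}$, and this is where I expect the real difficulty. Since $Y_d=T_d\times_{S_{d+1}^{n+1}}Q_{d+1}$ is the pullback of the finite-type morphism $T_d\to S_{d+1}^{n+1}$ along the zero section, base change of completions reduces the comparison to understanding $S_{d+1}^{n+1}$ formally along its zero section at the image $s'$ of $y'$. The key lemma I would establish is that $S_d\to Q_d$ is formally smooth along the zero section, with relative dimension equal to the fibre dimension $d-\ord_0(q)$ at a point $(q,0)$: writing the defining equations of $S_d$ through a generating series and factoring $q$ by the Weierstrass/Hensel method into its part vanishing at the origin times a unit, one solves the constraint that the candidate fibre vector be a genuine power series and thereby expresses all but finitely many fibre coordinates in terms of the rest, obtaining $\hat{(S_d)}_{(q,0)}\simeq\hat{(Q_d)}_q\hat\times\hat D^{\,d-\ord_0(q)}$. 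This extends the explicit computation for $S_1$ and the stated identity of completions at the special point $q=t^d$. Passing to the $(n+1)$-fold fibre product and base-changing along $T_d\to S_{d+1}^{n+1}$ then gives $\hat{(T_d)}_{y'}\simeq\hat{(Y_d)}_{y'}\hat\times\hat D^{\,r}$ with $r$ finite, and a final absorption of $\hat D^r$ into $\hat D^\infty$ closes the argument. The main obstacle is precisely this formal smoothness of the strange bundle $S_d$ at non-special points $q\ne t^d$, where the fibre dimension jumps and the transition maps of the defining projective system are visibly non-smooth, so the smoothness of the limit has to be read off from the Weierstrass factorization of $q$ rather than from any finite stage.
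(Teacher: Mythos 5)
Most of your zig-zag is exactly the paper's argument: the paper introduces the ad hoc notion of a ``formal equivalence on formal completions'', observes that morphisms of Weierstrass type and $\nil_\infty$-quasi-isomorphisms are such (your steps (i)--(iii), justified the same way, via Artinian points), and uses $N_{1,d}\simeq T_d\times\bA^\infty$ as in your step (iv). Your key lemma on the strange bundle is also correct, and is provable exactly as you sketch: over an Artinian local test ring $A$, writing the strict Weierstrass factorization $q=u\tilde q$ with $\tilde q$ monic of degree $e=\ord_0(q)$, the condition $qv\in A_d(A)$ is equivalent to $v=u^{-1}\sigma$ with $\sigma$ a polynomial of degree $<d-e$ with coefficients in $\fm_A$, whence $\hat{(S_d)}_{(q,0)}\simeq \hat{(Q_d)}_{q}\hat\times\hat D^{\,d-\ord_0(q)}$ compatibly with the projection and the zero section. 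This genuinely sharpens the paper, which records this structure only at the special point $q=t^d$.

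The gap is your last step, and it is not fillable. Pulling the decomposition $\hat{(S^{n+1}_{d+1})}_{s'}\simeq\hat{(Q_{d+1})}_{q'}\hat\times\hat D^{\,r}$ back along the morphism $T_d\to S^{n+1}_{d+1}$ (which is of finite type but in no way smooth) only exhibits $\hat{(Y_d)}_{y'}$ as the fibre over $0$ of the composite $\hat{(T_d)}_{t'}\to\hat{(S^{n+1}_{d+1})}_{s'}\to\hat D^{\,r}$, i.e.\ as a closed formal subscheme of $\hat{(T_d)}_{t'}$ cut out by $r$ equations; it does \emph{not} give $\hat{(T_d)}_{t'}\simeq\hat{(Y_d)}_{y'}\hat\times\hat D^{\,r}$, which would require that composite to be formally smooth (transversality of $T_d$ to the zero section), and nothing in the construction supplies this. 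The failure is real: take $X=\{x_1x_2=0\}\subset\bA^2$ (so $n=1$, $a=x_1x_2$, $c=x_1^2$), $d=2$, say ${\rm char}(k)=0$. Then $Y_2$ is the locus of $(u_0,q,\bar x_1,\bar x_2)\in\Gm\times Q_2\times A_3^2$ with $tq\mid \bar x_1\bar x_2$ and $tq\mid \bar x_1^2-u_0q$, and at $y'=(1,(t-1)^2,(t-1)^2,0)$ two irreducible components meet: a $5$-dimensional one where $q\mid\bar x_1$, and a $4$-dimensional one supported over the discriminant $\{q=(t-\theta)^2\}$ where only $(t-\theta)\mid\bar x_1$ -- the culprit being the non-transverse condition $q\mid\bar x_1^2$. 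On the other hand, near the corresponding point $t'$ of $T_2$ the monic $q$ is a unit in $A[[t]]$ for every Artinian test ring, so the congruences defining $T_2$ collapse to the two transverse equations $\bar x_1(0)\bar x_2(0)=0$, $\bar x_1(0)^2=u_0q(0)$, giving $\hat{(T_2)}_{t'}\simeq\hat D^{7}$; and the image arc $x'$ lies in the smooth locus of $X$, so $\cL X^{\wedge}_{x'}\simeq\hat D^\infty$. A reducible formal scheme times $\hat D^\infty$ is never isomorphic to $\hat D^\infty$, so your final isomorphism -- and indeed the stated proposition -- fails at this $y'$. You should know that the paper's own proof has the identical leap: its lemma rests on the assertion that formal equivalence ``is preserved by base change by a morphism of finite type'', which is false in general (pull the zero section $\{0\}\hookrightarrow\bA^1$ back along $x\mapsto x^2$) and false here; what survives unconditionally is only the case of points lying over $q=t^{d}$, where $r=0$, your lemma gives $\hat{(S^{n+1}_{d+1})}_{s'}\simeq\hat{(Q_{d+1})}_{q'}$, and base change is harmless.
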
 

\begin{proof}
	A morphism $f:M\to N$ will be said to be a formal equivalence on formal completions if for every $m\in M(k)$ mapping to $n\in N(k)$, there exists an isomorphism 
	$$\hat M_m \hat \times \hat D^\infty \simeq \hat N_n \hat \times \hat D^\infty.$$ 
	It is clear that the composition of two morphisms which are formal equivalences on formal completions morphisms, is also a formal equivalence on formal completions. It is also clear that a morphism of Weierstrass type is a formal equivalence on formal completions. 
	
	We claim that the morphism $Y_d\to T_d$ is a formal equivalence on formal completions. 
	It follows that $\phi_1(r):Y_d \to N_1$ is a formal equivalence on formal completions for the morphism $N_{1,d}\to N_1$ is of Weierstrass type. 
	
	The morphism $N_0\to N_1$ being a $\nil_\infty$-quasi-isomorphism, induces isomorphism on formal completions of points, it follows that $\phi_0(r):Y_d\to N_0$ is a formal equivalence on formal completions. Finally the morphism $N_0\to \cL X$ is of Weierstrass type, and therefore a formal equivalence on formal completions. We derive that the morphism $\phi(r):Y_d \to \cL X$ is a formal equivalence on formal completions.   
\end{proof}

\begin{lemma}
	The morphism $Y_d\to T_d$ constructed in the diagram \eqref{Y-d} is a formal equivalence on formal completions. 
\end{lemma}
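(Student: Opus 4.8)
The plan is to read the morphism $Y_d\to T_d$ off the defining Cartesian square \eqref{Y-d}: it is the base change of the zero section $z\colon Q_{d+1}\to S_{d+1}^{n+1}$ along the structure map $g\colon T_d\to S_{d+1}^{n+1}$. Fix a $k$-point $y\in Y_d(k)$; it is a pair $(t_0,q)$ with $t_0\in T_d(k)$, $q\in Q_{d+1}(k)$ and $g(t_0)=z(q)=(q,0)$ lying on the zero section, so the only completions of $S_{d+1}^{n+1}$ that occur are at zero-section points. Since passing to completions commutes with the fibre product in the square, one has
$$\widehat{(Y_d)}_y\;\simeq\;\widehat{(T_d)}_{t_0}\,\hat\times_{\,\widehat{(S_{d+1}^{n+1})}_{(q,0)}}\,\widehat{(Q_{d+1})}_q .$$
Everything thus reduces to two inputs: the local structure of the strange bundle $S_{d+1}^{n+1}$ along its zero section, and the way $g$ meets that zero section.

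The first and main step is the local structure of $S_m$ (with $m=d+1$) at a zero-section point $(q,0)$. Writing $e=\ord(q)$ for the order of vanishing of $q$ at $t=0$, I claim there is an isomorphism
$$\widehat{(S_m)}_{(q,0)}\;\simeq\;\widehat{(Q_m)}_q\,\hat\times\,\hat{\bA}^{\,m-e}$$
under which $S_m\to Q_m$ becomes the first projection and the zero section becomes $\widehat{(Q_m)}_q\hat\times\{0\}$. The case $q=t^m$ (here $e=m$ and the affine factor is absent) is exactly the fourth assertion of the earlier proposition on $S_d$, and the explicit computation of the ring $R_1$ is the instance $m=1$; the general case is proved in the same way. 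Concretely, in the inverse-limit presentation the fibre coordinates satisfy the linear recursion induced by multiplication by $q$, and over $\widehat{(Q_m)}_q$ this recursion solves so as to express all but $m-e$ of them in terms of the rest plus higher-order terms, the remaining $m-e$ being free; that $S_m$ is a scheme of Weierstrass type forces the completion to be formally smooth, hence a formal power series ring, and the fibrewise scaling action of $\Gm$, which contracts everything to the zero section, splits it $\Gm$-equivariantly as the asserted trivial affine bundle with the zero section as its zero. Taking the $(n+1)$-fold fibre product over $Q_{d+1}$ gives $\widehat{(S_{d+1}^{n+1})}_{(q,0)}\simeq\widehat{(Q_{d+1})}_q\hat\times\hat{\bA}^{\,N}$ with $N=(n+1)(d+1-e)$, again with zero section $\widehat{(Q_{d+1})}_q\hat\times\{0\}$.

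The second step is that $g$ is \emph{transverse} to the zero section at $t_0$, i.e. the composite of $g$ with the projection $S_{d+1}^{n+1}\to\hat{\bA}^{\,N}$ onto the fibre coordinates is formally smooth at $t_0$. This is precisely the solvability already exhibited in the proof of the preceding proposition: in \eqref{iso-M} the arc variable $v$, multiplied by the unit $u$, is solved freely against the target coordinate $w_n=a'(q,\bar x,x')-uv$, so $g$ surjects onto the $\cL\bA^{n+1}$ directions of $S_{d+1}^{n+1}$ and in particular onto its $N$ fibre directions. Granting transversality, the functions $g^{*}w_1,\dots,g^{*}w_N$ form part of a regular system of parameters in $\widehat{(T_d)}_{t_0}$, so imposing them (which is exactly passing to the fibre over the zero section) yields $\widehat{(Y_d)}_y\simeq\widehat{(T_d)}_{t_0}/(g^{*}w_1,\dots,g^{*}w_N)$ together with a splitting
$$\widehat{(T_d)}_{t_0}\;\simeq\;\widehat{(Y_d)}_y\,\hat\times\,\hat{\bA}^{\,N}.$$
Absorbing the finite affine factor into the infinite disc, $\hat{\bA}^{\,N}\hat\times\hat D^\infty\simeq\hat D^\infty$, this gives $\widehat{(Y_d)}_y\hat\times\hat D^\infty\simeq\widehat{(T_d)}_{t_0}\hat\times\hat D^\infty$ for every $k$-point $y$, which is the assertion that $Y_d\to T_d$ is a formal equivalence on formal completions. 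The same computation is valid after any field extension $k'/k$, since $S_{d+1}^{n+1}$, $T_d$, the maps $z,g$ and the smoothness of $\alpha_{d+1}$ from Prop.~\ref{alpha-smoothness} are all stable under base change of the ground field.

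The main obstacle is the first step. Unlike an ordinary vector bundle, the fibre rank $m-e$ of $S_m$ jumps along the zero section, so $S_m$ is not equidimensional and the trivialisation cannot come from a global normal-bundle argument; it must be produced pointwise, using the Weierstrass-type formal smoothness to know the completion is a power series ring and the $\Gm$-action to split off the fibre directions as the zero of a trivial affine factor. Extracting the transversality of $g$ cleanly from the free arc parameter $v$ of \eqref{iso-M}, so that the $g^{*}w_i$ really are a regular system of parameters rather than merely a set of generators, is the other delicate point.
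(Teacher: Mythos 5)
Your reduction is the same as the paper's: $Y_d\to T_d$ is the pullback of the zero section $z\colon Q_{d+1}\to S_{d+1}^{n+1}$ along $g\colon T_d\to S_{d+1}^{n+1}$, and everything hinges on the structure of $S_{d+1}^{n+1}$ along its zero section. Your first step, the product structure $\widehat{(S_m)}_{(q,0)}\simeq \widehat{(Q_m)}_q\,\hat\times\,\hat D^{\,m-e}$ at \emph{every} zero-section point, is correct and is a genuine strengthening of the fourth assertion of the proposition describing $S_d$ (which treats only $q=t^m$); it can indeed be proved as you indicate, by factoring the universal polynomial over $\widehat{(Q_m)}_q$ into a monic Weierstrass part and a unit and rerunning the computation of $R_1$, though the shortcut "Weierstrass type $\Rightarrow$ formally smooth $\Rightarrow$ power series ring" plus the $\Gm$-action is shakier than the direct computation. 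Note that the paper's own two-line proof takes a different route: it never mentions transversality, asserting instead that $z$ is a formal equivalence on formal completions and that this property "is preserved by base change by a morphism of finite type."

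The genuine gap is your second step, and it is not a removable detail. The appeal to \eqref{iso-M} misreads what that isomorphism says: there the free fibre parameter is $x'$, and $v$, $u'$ are \emph{solved for} in terms of $x'$ and $w$; this shows that $g_M$, the projection of $M$ onto $\Gm\times Q_d\times A^m_{d+1}\times\cL\bA^{n+1}$, is a trivial $\bA^\infty$-bundle, but transversality concerns the restriction to $T_d=g_M^{-1}(S_{d+1}^{n+1})$, on which the arc coordinate $V$ is not free. Concretely, at a $k$-point of $Y_d$ the polynomial $tq$ admits a strict Weierstrass factorization $tq=Q'Q''$ over the Noetherian complete local ring $\hat{\cO}_{W,w}$, $W=\Gm\times Q_d\times A^m_{d+1}$ (Prop.~\ref{strict-Weierstrass-Artin} applied over each Artinian quotient), so multiplication by $tq$ is injective and $V=-(tq)^{-1}F$ is determined by the finite-dimensional coordinates, where $F=(\bar a(q,\bar x),\bar c(q,\bar x)-u_0q)$. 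Hence $\widehat{(T_d)}_{t_0}\simeq\{w\in\hat W: F(w)\equiv 0 \bmod Q'\}$ while $\widehat{(Y_d)}_{y}=\{w\in\hat W: F(w)=0\}$, and your transversality is exactly the assertion that the image of $dF$ at the point contains $Q'\cdot A^{n+1}_{d-e}$. That is a nontrivial statement about the specific polynomials $a=\hat B(x,f(x))$ and $c=\xi^2$ — any proof would have to exploit identities such as $\hat B f\equiv 0$, $\xi f\equiv 0$, $\xi^2\equiv u_0q \bmod tq$ holding along $Y_d$ — and nothing in the proposal establishes it. Nor can the step be bypassed: cutting a completion by extra equations does not in general preserve the $\hat D^\infty$-equivalence class (cut the node $\{xy=0\}\subset\hat D^2$ by $x+y$), which is also why the paper's own general principle of stability under finite-type base change is, taken literally, false (pull the origin of $\bA^2$, a formal equivalence, back along the inclusion of the nodal curve). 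So your instinct that a transversality statement is the true content of the lemma is sound, and your framework would, if completed, be more rigorous than the paper's; but as written the proposal leaves precisely that statement unproven, so the proof is incomplete at its crucial point.
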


\begin{proof}
	This follows from the fact the morphism $Q_{d+1} \to S_{d+1}^{n+1}$ is a formal equivalence on formal completions. This property is preserved by base change by a morphism of finite type.
\end{proof}

\section{Acknowledgement}

This paper grew out of notes for talks given in Geometric Langlands Seminar in April 2017. I would like to thank V. Drinfeld, M. Nori and Y. Sakellaridis for useful discussions. In particular, Drinfeld helped me to clarify the statement of Prop. 3.3, and Nori pointed out  \eqref{equations-Z1} as example of a commutative ring with nonzero infinite nilradical. 

This work is partially supported by the NSF grant DMS-1702380 and the Simons foundation.

\end{document}